\newtheorem{theorem}{\bf Theorem}[section]
\newtheorem{lemma}[theorem]{\bf Lemma}
\newtheorem{corollary}[theorem]{\bf Corollary}
\newtheorem{definition}[theorem]{\bf Definition}
\newtheorem{remark}[theorem]{\bf Remark}
\newtheorem{proposition}[theorem]{ Proposition}
\newtheorem{example}[theorem]{\bf Example}
\newcommand{ \mis}{\textrm{meas}}  
\newcommand{\pr}[1]{
\begin{proof}
#1
\end{proof}
}
\newcommand{\es}[2]{
\begin{equation}\label{#1}
\begin{split}
#2
\end{split}
\end{equation}
}
\newcommand{\en}[1]{
\begin{equation*}
\begin{split}
#1
\end{split}
\end{equation*}
}
\def \k {{\kappa}}
\def \b {{\beta}}
\def \d {{\delta}}
\def \L {\mathscr{L}}
\def \LL {{\widetilde {\mathscr{L}}}}
\def \K {\mathscr{K}}
\def \OO {{\mathbb{O}}}
\def \R {{\mathbb {R}}}
\def \N {{\mathbb {N}}}
\def \x {{\xi}}
\def \t {{\tau}}
\def \z {{\zeta}}
\def \div {{\text{\rm div}}}
\def \tilde {\widetilde}
\def \rn {{\mathbb {R}}^{N}}
\def \rnn {{\mathbb {R}}^{N+1}}
\def \H {\mathcal{Q}}
\begin{document}

\title{\bf Schauder type estimates for degenerate Kolmogorov equations with Dini continuous coefficients}
%\title{\bf A priori regularity estimates for classical solutions to degenerate Kolmogorov equations with Dini continuous coefficients}

\author{
{\sc{Sergio Polidoro\thanks{Dipartimento di Scienze Fisiche, Informatiche e Matematiche, Universit\`{a} di Modena e Reggio Emilia, via Campi 213/b, 41115 Modena (Italy). E-mail: sergio.polidoro@unimore.it}}} 
\and 
{\sc{Annalaura Rebucci\thanks{Dipartimento di Scienze Fisiche, Matematiche e Informatiche, Universit\`{a} di Parma, Parco Area delle Scienze 7/A, 43124 Parma (Italy). Email: annalaura.rebucci@unipr.it}}}
\and
{\sc {Bianca Stroffolini \thanks{ Dipartimento di Ingegneria Elettrica e delle Tecnologie dell'Informazione, Universit\`{a} di Napoli Federico II, Via Claudio 25, 80125 Napoli, Italy.  E-mail: bstroffo@unina.it}}}}
\date{}
\maketitle
\begin{abstract}
\noindent We study the regularity properties of the second order linear operator in $\R^{N+1}$:
\begin{equation*}
\L u := \sum_{j,k= 1}^{m} a_{jk}\partial_{x_j x_k}^2 u + \sum_{j,k= 1}^{N} b_{jk}x_k \partial_{x_j} u - \partial_t u,
\end{equation*}
where $A = \left( a_{jk} \right)_{j,k= 1, \dots, m}, B= \left( b_{jk} \right)_{j,k= 1, \dots, N}$ are real valued matrices with constant coefficients, with $A$ symmetric and strictly positive. We prove that, if the operator $\L$ satisfies H\"ormander's hypoellipticity condition, and $f$ is a Dini continuous function, then the second order derivatives of the solution $u$ to the equation $\L u = f$ are Dini continuous functions as well. We also consider the case of Dini continuous coefficients $a_{jk}$'s. A key step in our proof is a Taylor formula for classical solutions to $\L u = f$ that we establish under minimal regularity assumptions on $u$.

\medskip

\noindent
2000  {\em Mathematics Subject Classification.} 35K70, 35K65, 35B65.
\noindent

\medskip

\noindent
{\it Keywords and phrases: Degenerate Kolmogorov equations, regularity theory, classical solutions, Dini continuity, Taylor formula.}
\end{abstract}
% \newpage

\setcounter{equation}{0}\setcounter{theorem}{0}
\section{Introduction}
We consider second order linear differential operators of the form 
\begin{equation} \label{e-Kolm-costc}
   \L :=\sum_{i,j=1}^m a_{ij}\partial^{2}_{x_i x_j}+\sum_{i,j=1}^N b_{ij}x_j\partial_{x_i}-\partial_t,
%   & =\text{Tr}(A D^{2}) + \langle B x, D  \rangle-\partial_t,
\end{equation}
where $(x,t) \in \R^{N+1}$, and $1 \le m \le N$. 
%$D$ denotes the gradient with respect to the space variable $x$, and $\langle \, ,  \rangle$ is the inner product in $\R^N$. 
The matrices $A:=(a_{ij})_{i,j=1,\ldots,m}$ and $B:=(b_{ij})_{i,j=1,\ldots,N}$ have real constant entries. 
The first order part of the operator $\L$ will be denoted by $Y$
\begin{equation} \label{eq-Y}
 Y := \sum_{i,j=1}^N b_{ij}x_j\partial_{x_i}-\partial_t = \langle B x, D \rangle-\partial_t,
\end{equation}
and $Y u$ will be understood as the \emph{Lie derivative}
\begin{equation} \label{eq-Yu}
 Yu (x,t) := \lim_{s \to 0} \frac{u(\text{exp} (s B )x, t-s) - u(x,t)}{s}.
\end{equation}
%\bsr (La parte che segue potrebbe essere spostata in Section 2. N.B. Contiene la definizione di $E(t)$.)

Note that $Y u$ is the derivative of $u$ along the characteristic trajectory of $Y$, if we identify the directional derivative $Y$ with the vector valued function $Y(x,t) = (Bx,-1)$. The standing assumption of this article is:

\medskip
\noindent
{\bf[H.1]} \ The matrix $A$ is symmetric and strictly positive, the matrix $B$ has the form
   \begin{equation}
 			   \label{B}
 			   B =    \begin{pmatrix}
         B_{0,0} &    B_{0,1}  & \ldots &     B_{0, \k - 1}  &   B_{0, \k }  \\  
        B_1   &    B_{1,1}  & \ldots &     B_{ 1,\k - 1}  &   B_{1 , \k}  \\
        \OO    &    B_2  & \ldots &   B_{2, \k - 1}   &  B_{2 , \k}   \\
        \vdots & \vdots & \ddots & \vdots & \vdots \\
        \OO    &  \OO    &    \ldots & B_\k    &   B_{\k,\k}
    \end{pmatrix}=
 			    \begin{pmatrix}
 			       \ast &  \ast & \ldots & \ast & \ast  \\
 			       B_1 & \ast &  \ldots & \ast & \ast \\
                   \OO & B_{2}  & \ldots & \ast & \ast \\
                   \vdots & \vdots  & \ddots & \vdots & \vdots  \\
                   \OO & \OO & \ldots & B_{\k} & \ast 
   			 \end{pmatrix}
\end{equation}
{where every block $B_j$ is an $m_{j} \times m_{j-1}$ matrix of rank $m_j$ with $j = 1, 2, \ldots, \k$. Moreover, the $m_j$s are positive integers such that
\begin{eqnarray}
m_0 \geq m_1 \geq \ldots \geq m_\kappa \geq 1, \quad \textrm{and} \quad m_0+m_1+\ldots+m_\kappa=N.
\end{eqnarray}
We agree to let $m_0 :=m$ to have a consistent notation, moreover $\OO$ denotes a block matrix whose entries are zeros, whereas the coefficients of the blocks ``$\ast$'' are arbitrary. Note that we allow the operator $\L$ to be strongly degenerate, when $m < N$. However, the assumption {\rm \bf[H.1]} implies that the first order part $Y$ of $\L$ induces a strong regularity property. Indeed, it is known that $\L$ is hypoelliptic, namely that every distributional solution $u$ to $\L u = f$ defined in some open set $\Omega \subset \R^{N+1}$ belongs to $C^\infty(\Omega)$, and is a classical solution to $\L u = f$, whenever $f \in C^\infty(\Omega)$. In Section 2 we will recall several known results about the operators $\L$ that will be used in the sequel.  

In this article we study the local regularity of the classical solution $u$ to $\L u = f$ when $f$ is Dini continuous. For this reason we require as few conditions as possible for the definition of $\L u$. 
%In particular, we define $Yu$ by asking the bare existence of the limit in \eqref{eq-Yu}. 

\begin{definition} \label{def-C2}
Let $\Omega$ be an open subset of $\R^{N+1}$. We say that a function $u$ belongs to $C^2_\L(\Omega)$ if $u$, its derivatives $\partial_{x_i}u, \partial_{x_i x_j}u$ ($i, j = 1, \dots, m$) and the Lie derivative $Yu$ defined in \eqref{eq-Yu} are continuous functions in $\Omega$. We also require, for $i=1, \dots, m$, that
\begin{equation} \label{eq-Ymix}
 \lim_{s \to 0} \frac{\partial_{x_i}u (\text{\rm exp} (s B )x, t-s) - \partial_{x_i}u(x,t)}{|s|^{1/2}} = 0,
\end{equation}
uniformly for every $(x,t) \in K$, where $K$ is a compact set $K \subset \Omega$. 

Let $f$ be a continuous function defined in $\Omega$. We say that a function $u$ is a classical solution to $\L u = f$ in $\Omega$ if $u$ belongs to $C^2_\L(\Omega)$, and the equation $\L u = f$ is satisfied at every point of $\Omega$. 
\end{definition}

Note that, as $\L$ is a linear second order differential operator, it is natural to consider $Y$ as a second order derivative, and \eqref{eq-Ymix} can be interpreted as a condition on the second order mixed derivative of the form $Y^{1/2} \partial_{x_i}u$. This condition will be used to prove that $u$ is approximated by its intrinsic Taylor polynomial of degree 2, which is one of the main achievements of this article. We recall that the \emph{nth}-order intrinsic Taylor polynomial of a function $u$ around the point $z$ can be defined as the unique polynomial function $P_z^n u$ of order $n$ such that 
\begin{eqnarray*}
u(\zeta)-P_z^n u(\zeta)=o(\Vert z^{-1} \circ \z \Vert_K^n) \quad \text{as} \quad \zeta \to z,
\end{eqnarray*}
where $\Vert \cdot \Vert_K$ denotes the semi-norm as defined in \eqref{def-norm} below.

\begin{theorem} \label{taylor} %[Taylor polynomial]
Let $\L$ be an operator in the form \eqref{e-Kolm-costc} satisfying hypothesis {\rm \bf[H.1]}. Let $\Omega$ be an open subset of $\R^{N+1}$ and let $u$ be a function in $C_{\L}^2(\Omega)$. For every $z:=(x,t) \in \Omega$ we define the second order Taylor polynomial of $u$ around $z$ as
\begin{equation}\label{def-tay}
\begin{split}
 T^2_{z}u(\z) & := u(z) + \sum_{i=1}^{m} \partial_{x_{i}} u(z) (\x_i-x_i) \\
& +\frac{1}{2} \sum_{i,j=1}^{m} \partial^2_{x_{i}x_{j}} u(z) (\x_i-x_i)(\x_j-x_j) - Yu(z)(\t-t),
\end{split}
\end{equation}
for any $\z = (\x,\t) \in \Omega$.
Moreover, we have
\begin{eqnarray}\label{tay-estimate}
u(\z)-T^2_{z}u(\z)=o(\Vert z^{-1} \circ \z \Vert_K^2) \quad \text{as} \quad \zeta \to z.
\end{eqnarray} 
\end{theorem}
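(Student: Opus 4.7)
I would begin by using the left invariance of $\L$ and of $\partial_{x_1},\ldots,\partial_{x_m}, Y$ under the Lie group law $\circ$ associated to $\L$. Setting $u_z(\zeta) := u(z \circ \zeta)$ and $v := u_z - T^2_0 u_z$, a direct computation using the group law shows that $v \in C^2_\L$ near the origin with $v(0)$, $\partial_{x_i}v(0)$, $\partial^2_{x_i x_j}v(0)$ ($i,j \leq m$), and $Yv(0)$ all vanishing; moreover $T^2_0 u_z$ and $T^2_z u$ agree up to a polynomial that is $o(\|\cdot\|_K^2)$ in the intrinsic metric, so it suffices to prove $v(\zeta) = o(\|\zeta\|_K^2)$ as $\zeta \to 0$.

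Given $\zeta$ with $\|\zeta\|_K = r$ small, I would then construct an intrinsic curve from $0$ to $\zeta$ as a concatenation of finitely many pieces of two types: horizontal segments of length $\lesssim r$ along $\partial_{x_1},\ldots,\partial_{x_m}$, and $Y$-flow segments of time $\lesssim r^2$. The block structure \eqref{B}, together with a Baker--Campbell--Hausdorff type computation, ensures such a curve exists with a number of pieces depending only on $\kappa$. Each $Y$-flow piece contributes $\int Yv\,ds = o(r^2)$, by continuity of $Yv$ and $Yv(0) = 0$. On each horizontal piece of length $\lesssim r$, the change in $v$ equals $\int \partial_{x_i}v$, and the crucial bound is $|\partial_{x_i}v(p)| = o(r)$ at the relevant points $p$.

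The estimate $|\partial_{x_i}v(p)| = o(r)$ is obtained by combining three ingredients. First, the classical 1D Taylor formula in the horizontal variable reduces the estimation to $\partial_{x_i}v$ at the projection of $p$ onto $\{x^{(0)} = 0\}$, using continuity and the vanishing of $\partial^2_{x_i x_j}v$ at $0$, with error of order $o(r)$. Second, the mixed-derivative condition \eqref{eq-Ymix} relates this value to $\partial_{x_i}v$ at a time-zero point along the $Y$-trajectory of duration $|s| \lesssim r^2$, with error $o(|s|^{1/2}) = o(r)$. Third, a recursive auxiliary commutator path handles the time-zero evaluation at points with nonzero higher-layer components. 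Integrating $\partial_{x_i}v = o(r)$ over a horizontal segment of length $\lesssim r$ gives $o(r^2)$ per piece; summing over the finitely many pieces yields $v(\zeta) = o(r^2)$.

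\textbf{Main obstacle.} The hard part is the recursive estimate $|\partial_{x_i}v(p)| = o(r)$ at points lying in the higher strata $x^{(1)},\ldots, x^{(\kappa)}$, which requires an inductive commutator argument layer by layer. The condition \eqref{eq-Ymix} is precisely the minimal hypothesis making this feasible: its $o(|s|^{1/2})$ rate matches $Y$-flow of time $\sim r^2$ against horizontal displacement of size $\sim r$, yielding the target $o(r^2)$. Careful bookkeeping of the error terms through the $\kappa+1$ strata, both for the primary path and for the auxiliary sub-paths used to estimate $\partial_{x_i}v$, constitutes the main technical content of the proof.
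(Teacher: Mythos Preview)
Your approach is essentially the same as the paper's: connect $z$ to $\zeta$ by a concatenation of integral curves of $X_1,\dots,X_m$ and $Y$, built recursively through the strata $V_0,\dots,V_\kappa$ via commutator loops of the form $e^{-s^2Y}\circ\gamma^{(n)}_{-s}\circ e^{s^2Y}\circ\gamma^{(n)}_{s}$, and use the hypothesis \eqref{eq-Ymix} exactly at the step where a first derivative $\partial_{x_i}u$ must be compared at two points differing by a $Y$-flow of duration $\sim r^2$. The paper organizes the estimate slightly differently---it telescopes with Taylor polynomials based at the intermediate points $z_1,\dots,z_4$ (the ``boxes'' decomposition) rather than tracking $v$ and $\partial_{x_i}v$ along the path---but the analytic content is the same, and your identification of the bound $|\partial_{x_i}v(p)|=o(r)$ as the crux corresponds to the paper's box~(5).

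One step in your reduction does not quite work as stated in the non-dilation-invariant case. You claim that $T^2_0 u_z$ and $T^2_z u$ (composed with the left translation) agree up to a polynomial that is $o(\|\cdot\|_K^2)$. With $\eta=z^{-1}\circ\zeta$ one has $\eta_i=\xi_i-x_i-((E(\tau-t)-I)x)_i$ for $i\le m$, and when the first block row of $B$ is nonzero the correction $((E(\tau-t)-I)x)_i$ contributes a term $-(\tau-t)(Bx)_i+O((\tau-t)^2)$; the resulting discrepancy in the first-order part of the Taylor polynomial is a fixed multiple of $(\tau-t)$, hence $O(\|\eta\|_K^2)$ but not $o(\|\eta\|_K^2)$. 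The paper encounters the same obstruction (the commutator loops no longer leave the lower strata invariant, so $\tilde T^2_z u(z_4)\ne 0$) and fixes it by an explicit extra horizontal step to a point $z_5$ with $x_5^{[0]}=x^{[0]}$, invoking the bound $|x^{[0]}-x_4^{[0]}|\le C\|z^{-1}\circ\zeta\|_K$ from \cite{PagliaPigno}. Either drop the left-invariance reduction and work directly at $z$ (as the paper does), or carry the $O(\|\eta\|_K^2)$ polynomial discrepancy along and eliminate it with the same extra horizontal step; the rest of your argument then goes through.
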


Our main result is the local regularity of the classical solution $u$ to $\L u = f$ when $f$ is Dini continuous. In order to define a modulus of continuity which is suitable for the operator $\L$ we recall the Lie group structure $\mathbb{K}=(\R^{N+1},\circ)$ introduced by Lanconelli and Polidoro in \cite{LanconelliPolidoro}, and some related notation. In Section 2 we will explain its connection with $\L$. We let
\begin{equation} \label{eq-e}
	E(t) := \text{exp} (-t B ),
\end{equation}
and we define
\begin{equation}\label{e70}
\mathbb{K}=(\R^{N+1},\circ), \quad 
 (x,t)\circ (\x,\t)=(\x+ E(\t)x,t+\t), \quad (x,t),(\x,\t)\in \R^{N+1}.
\end{equation}
Then $\mathbb{K}$ is a non-commutative group with zero element $(0,0)$ and inverse
\begin{equation*}
(x,t)^{-1} = (-E(-t)x,-t).
\end{equation*}
In \cite{LanconelliPolidoro} it is proved that the operator $\L$ is invariant with respect to a family of dilations $(\delta_r)_{r>0}$ if, and only if, the matrix $B$ in \eqref{B} agrees with $B_0$ defined as:
 \begin{equation} \label{B_0}
 			   B_0 = \begin{pmatrix}
 			       \OO &  \OO & \ldots & \OO & \OO \\
 			       B_1 & \OO &  \ldots & \OO & \OO\\
                   \OO & B_{2}  & \ldots & \OO & \OO \\
                   \vdots & \vdots  & \ddots & \vdots & \vdots  \\
                   \OO & \OO & \ldots & B_{\k} & \OO
   			 \end{pmatrix}
\end{equation}
In other words, every block denoted by $*$ in \eqref{B} has zero entries. In this case the dilation is defined for every positive $r$ as
\begin{equation}\label{e-dilations}
    \delta_r :=\textrm{diag}(r I_{m}, r^3 I_{m_1}, \ldots, r^{2\kappa+1}I_{m_\kappa},r^2),
\end{equation}
where $I_k$, $k\in\N$, is the $k$-dimensional unit matrix. 

In the sequel we let $\L_0$ be the operator obtained from $\L$ by replacing its matrix $B$ with $B_0$ defined in \eqref{B_0}, and we base our blow-up argument on the family of dilations $(\delta_r)_{r>0}$. Hence we take advantage of the invariant structure of $\L_0$ in the study of the regularity of $\L$. This fact is quite natural as $\L_0$ is the \emph{blow-up limit of} $\L$, as it is shown in Section 2.2 of \cite{AnceschiPolidoro}. 

We now introduce a homogeneous semi-norm of degree 1 with respect to the family of dilations $(\delta_r)_{r >0}$ in \eqref{e-dilations} and a quasi-distance which is invariant with respect to the group operation in \eqref{e70}. We first rewrite the matrix $\delta_r$ with the equivalent notation
\begin{equation}\label{e-dilations-cp}
    \delta_r :=\textrm{diag}(r^{\alpha_1}, \ldots, r^{\alpha_N},r^2),
\end{equation}
where $\alpha_1, \dots, \alpha_{m_0} =1, \alpha_{m_0+1}, \dots, \alpha_{m_0+m_1} = 3, \alpha_{N-m_\kappa}, \dots, \alpha_N = 2 \kappa + 1$. 
\begin{definition}
	\label{norm-def}
    For every $(x,t) \in \R^{N+1}$ we set
  \begin{equation}\label{def-norm}
  %|x|_K= \max_{j=0}^{\kappa}\big|x^{(j)}\big|^{\frac{1}{2j+1}}, \qquad
  %\|(x,t)\|_K := \sum_{j=0}^{\kappa}\big|x^{(j)}\big|^{\frac{1}{2j+1}} + |t|^{\frac{1}{2}}.
  \|(x,t)\|_K := \max \left\{ |x_1|^{\frac{1}{\alpha_1}}, \dots,  |x_N|^{\frac{1}{\alpha_N}}, |t|^{\frac{1}{2}} \right\}.
\end{equation}
% where, according to \eqref{e-dilations}, we have split the coordinate $x\in\R^N$ as
% \begin{equation}\label{split.coord.RN}
%     x=\big(x^{(0)},x^{(1)},\ldots, x^{(\kappa)}\big), \qquad 
%     %x^{(0)}\!\in\R^m, \quad 
%     x^{(j)}\!\in\R^{m_j}, \quad j\in\{0,\ldots,\kappa\},
% \end{equation}
\end{definition}
Note that the semi-norm is homogeneous of degree $1$ with respect to the family of dilations $(\delta_r)_{r>0}$, namely $ \|\delta_r (x,t) \|_K =r \| (x,t)\|_K $ for every $r>0$ and $(x,t)\in\R^{N+1}$. Moreover, the following pseudo-triangular inequality holds: for every bounded set $H \subset \R^{N+1}$ there exists a positive constant ${\bf c}_H$ such that
\begin{equation}\label{e-ps.tr.in}
 \|(x,t)^{-1}\|_K \le {\bf c}_H  \| (x,t) \|_K, \qquad  
 \|(x,t) \circ (\x,\t) \|_K \le {\bf c}_H  (\| (x,t) \|_K + \| (\x,\t) \|_K), 
\end{equation}
for every $(x,t), (\x,\t) \in H$. We then define the {\it quasi-distance} $d_K$ by setting
\begin{equation}\label{e-ps.dist}
    d_K( (x,t) ,(\x,\t)):= \|(\x,\t)^{-1}\circ (x,t)\|_K, \qquad (x,t), (\x,\t) \in \R^{N+1},
\end{equation}
and the {\it ball}
\begin{equation}\label{e-BK}
    \H_r (x_0,t_0):= \{ (x,t) \in \R^{N+1} \mid d_K((x,t), (x_0,t_0)) < r\}.
\end{equation}
Note that from \eqref{e-ps.tr.in} it directly follows
\begin{equation*}
    %d_K(z,\z)\le c \, d_K(\z,z),\qquad
    d_K((x,t) ,(\x,\t))\le {\bf c}_H (d_K((x,t) ,(y,s))+d_K((y,s) ,(\x,\t))), %\quad (x,t) ,(\x,\t),(y,s) \in \R^{N+1}.
\end{equation*}
%Also note that, by its very definition, $\H_1 (0,0)$ agrees with the unit box of $\R^{N+1}$, $\H_1 (0,0) = ]-1,1[^{N+1}$, and that 
for every $ (x,t) ,(\x,\t),(y,s) \in \R^{N+1}$. We eventually define the modulus of continuity of a function $f$ defined on any set $H \subset \R^{N+1}$ as follows
%\begin{equation*}
%\omega_f(r) =\sup_{d_K(z, \zeta)<r} |f(x,t)-f(x', s)|.
%\end{equation*}
\begin{equation} \label{eq-omegaf}
\omega_f(r) := \sup_{\substack{(x,t), (\x,\t)\in H \\ d_K((x,t) ,(\x,\t))<r}} |f(x,t)-f(\x, \t)|.
\end{equation}
\begin{definition} \label{d-DC}
A function $f$ is said to be Dini-continuous in $H$ if 
\begin{equation*}
\int_{0}^{1} \frac{\omega_f(r)}{r}dr < + \infty.
%\omega_f(r) =\sup_{d_K(z, \zeta)<r} |f(x,t)-f(x', s)| <\infty.
\end{equation*}
\end{definition}
We are now in position to state our main result.

\begin{theorem} \label{th-1}
Let $\L$ be an operator in the form \eqref{e-Kolm-costc} satisfying hypothesis {\rm \bf[H.1]}. Let $u\in C^2_\L(\H_{1}(0,0))$ be a classical solution to $\L u=f$. Suppose that $f$ is Dini continuous. Then there exists a positive constant $c$, only depending on the operator $\L$, such that:
\begin{description}
 \item[{\it i)}]
 \begin{equation*}
\vert \partial^2 u(0,0) \vert \le c \left(\sup_{\H_1(0,0)} \vert u \vert + \vert f(0,0) \vert +\int_{0}^1 \frac{\omega_{f}(r)}{r} \right);
\end{equation*}
 \item[{\it ii)}] for any points $(x,t)$ and $ (\x,\t) \in \H_{\frac14}(0,0)$ we have
\begin{equation*}
|\partial^2 u(x,t)- \partial^2 u(\x, \t)| \le c\left(d \sup_{\H_{1}(0,0)} |u| + d \sup_{\H_{1}(0,0)}|f|  + \int_0 ^d \frac{\omega_{f}(r)}{r}+ d \int_{d} ^1 \frac{\omega_{f}(r)}{r^2}\right).
\end{equation*}
where $d: =d_K ( (x,t), (\x,\t))$ and $\partial^2$ stands either for $\partial^2_{x_i x_j}$, with $i,j=1,\ldots,m$, or for $Y$.
\end{description}
\end{theorem}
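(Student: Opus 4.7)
I would follow a Campanato-type iteration, in the spirit of the approach used for Schauder estimates with Dini right-hand side. At each intrinsic dyadic scale $r_k=\sigma^k$ (with $\sigma\in(0,1)$ fixed suitably small) I construct an intrinsic polynomial of degree~2 that approximates $u$ on $\H_{r_k}(0,0)$, and show that the second order coefficients of these polynomials form Cauchy sequences whose rate of convergence is governed by $\omega_f$. The construction relies on the classical freezing trick: at scale $r$ let $h_r$ be the solution of the frozen problem
\begin{equation*}
    \L h_r = f(0,0) \quad \text{in } \H_r(0,0), \qquad h_r = u \quad \text{on } \partial\H_r(0,0),
\end{equation*}
which is smooth inside by hypoellipticity, so that its intrinsic Taylor polynomial at the origin $P_r:=T^2_{(0,0)}h_r$ is well defined by Theorem \ref{taylor} applied to $h_r$. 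A Bony-type maximum principle for $\L$, combined with the bound $|\L(u-h_r)|=|f-f(0,0)|\le\omega_f(r)$ on $\H_r$, gives $\sup_{\H_r}|u-h_r|\le c\, r^{2}\omega_f(r)$. On top of this, a scale-invariant interior estimate -- obtained by a compactness/blow-up argument based on the $(\delta_r)_{r>0}$-homogeneity of the limit operator $\L_0$ -- produces
\begin{equation*}
    \sup_{\H_{\sigma r}}|h_r-P_r|\le c\,\sigma^{3}\bigl(\sup_{\H_r}|u|+r^{2}|f(0,0)|\bigr).
\end{equation*}

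\textbf{Iteration and proof of (i)--(ii).} Running the one-step improvement inductively yields polynomials $(Q_k)_{k\ge0}$ of intrinsic degree 2 such that
\begin{equation*}
    \sup_{\H_{r_k}}|u-Q_k| \;\le\; c\,\sigma^{3k}\sup_{\H_1}|u| + c\sum_{j<k}\sigma^{3(k-1-j)}\, r_j^{2}\,\omega_f(r_j).
\end{equation*}
Since two intrinsic polynomials of degree 2 are compared through their coefficients by $|\partial^2(Q_{k+1}-Q_k)|\le c\, r_{k+1}^{-2}\sup_{\H_{r_{k+1}}}|Q_{k+1}-Q_k|$, the telescopic sum $\sum_k|\partial^2 Q_{k+1}-\partial^2 Q_k|$ is a geometric--Dini series controlled by $c\sup_{\H_1}|u|+c|f(0,0)|+c\int_0^1\omega_f(r)/r\,dr$. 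The limit of $\partial^2 Q_k$ exists and, by the uniqueness clause of Theorem \ref{taylor}, it must coincide with $\partial^2 u(0,0)$; this proves (i). For (ii), given $z_1,z_2\in\H_{1/4}(0,0)$ with $d:=d_K(z_1,z_2)$, I run the iteration around each of the two base points (using the left-invariance of $\L$ under $\circ$), stop at level $k^{*}$ with $r_{k^{*}}\sim d$, and compare the two resulting Taylor polynomials on $\H_{r_{k^{*}}}(z_1)\cap\H_{r_{k^{*}}}(z_2)$. The high-frequency tail $k\ge k^{*}$ reproduces $\int_0^{d}\omega_f(r)/r\,dr$; the low-frequency head $k<k^{*}$ produces the weighted contribution $d\int_{d}^{1}\omega_f(r)/r^{2}\,dr$ through the factor $\sigma^{3(k^{*}-k)}/r_{k^{*}}$, which encodes the comparison of the two distinct base points at relative distance $d$.

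\textbf{Main obstacle.} The hardest ingredient is the scale-invariant interior estimate used in the freezing step, because $\L$ itself is \emph{not} homogeneous under $(\delta_r)_{r>0}$: only its blow-up $\L_0$ is. To extract the $\sigma^{3}$-gain one has to rescale $h_r$ to the unit ball via $\delta_r$, extract a convergent subsequence through hypoelliptic a priori bounds, pass to the limit $r\to 0$ obtaining a solution of $\L_0 H=\text{const}$, and then exploit the polynomial-modulo-cubic-remainder behaviour of $\L_0$-harmonic functions. The matching between the $\L_0$-Taylor expansion of $H$ and the genuine intrinsic Taylor polynomial of the merely $C^{2}_{\L}$ function $u$ is precisely the delicate point at which Theorem \ref{taylor} is indispensable: without its uniqueness statement one could not identify the abstract limit of $\partial^{2} Q_k$ with $\partial^{2}u(0,0)$, and the whole iteration would fail to produce a Schauder-type conclusion.
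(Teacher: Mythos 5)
Your outline shares the starting point of the paper's proof -- the sequence of Dirichlet problems $\L u_k=f(0,0)$ in $\H_{\varrho^k}(0,0)$ with boundary data $u$, together with the use of Theorem \ref{taylor} to identify the limit of the second derivatives -- but it then diverges into a Campanato-type polynomial iteration with a cubic gain, whereas the paper follows Wang's method directly. The paper never needs a $\sigma^3$ gain: since $u_k-u_{k+1}$ is $\L$-harmonic on $\H_{\varrho^{k+1}}(0,0)$, the second-order interior estimates of Proposition \ref{corollary} already give $\|\partial^2(u_k-u_{k+1})\|_{L^\infty(\H_{\varrho^{k+2}})}\le C\omega_f(\varrho^k)$; part (ii) then follows by telescoping this for $I_2,I_3$ and applying the mean-value Proposition \ref{mean-value-lem} to $\partial^2(u_k-u_{k+1})$ for $I_1$, while the identification $\lim_k\partial^2 u_k(0,0)=\partial^2 u(0,0)$ is obtained from Theorem \ref{taylor} together with the quantitative Lemma \ref{lem-tec-2}. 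Your route requires strictly more machinery (third-order interior estimates or a compactness/blow-up argument) to obtain the $\sigma^3$ gain, whereas the paper gets by with second-order estimates only.

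More importantly, as written your iteration does not close. Your one-step gain
\begin{equation*}
\sup_{\H_{\sigma r}}|h_r-P_r|\le c\,\sigma^3\bigl(\sup_{\H_r}|u|+r^2|f(0,0)|\bigr)
\end{equation*}
is applied to $h_r$ itself, whose size on $\H_r$ is of order $\sup_{\H_r}|u|$; iterating this produces a single factor $c\sigma^3\sup_{\H_{r_{k-1}}}|u|$ at each step, not the geometric term $c\sigma^{3k}\sup_{\H_1}|u|$ that you claim. (Take $u\equiv 1$: the right-hand side of your recursion stays bounded away from zero, so it is not $o(r_k^2)$ and cannot identify the Taylor polynomial of $u$.) To get geometric decay you must apply the gain to $g_k:=h_{r_k}-Q_k$, which equals $u-Q_k$ on $\partial\H_{r_k}$ and satisfies $\L g_k = f(0,0)-\L Q_k$. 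But for a general matrix $B$ satisfying \textbf{[H.1]} the operator $\L$ is \emph{not} $(\delta_r)$-homogeneous, so $\L Q_k$ is a genuine first- or second-degree polynomial in $\xi$ (the drift $\langle Bx,D\rangle$ does not annihilate the quadratic part of an intrinsic degree-2 polynomial). Your cubic gain is stated only for the constant right-hand side $f(0,0)$ and does not cover this case. This is exactly the obstruction that Lemma \ref{lem-tec} and Lemma \ref{lem-tec-2} are designed to control -- they show the polynomial source contributes only $O(R)$ to the second derivatives -- and without an analogue of them the recursion you propose does not close for non-dilation-invariant $\L$. Finally, note that the paper proves part (i) by a direct representation-formula argument using \eqref{est-sec-gamma-const}, not as a by-product of the iteration; your telescoping of $|\partial^2(Q_{k+1}-Q_k)|$ is a viable alternative, but only once the recursion is repaired as above.
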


We emphasize that Theorem \ref{th-1} fails even in the simplest Euclidean setting if we don't assume any regularity condition on the function $f$. Consider for instance the function 
\begin{equation*}
 u(x,y) = xy (\log(x^2+y^2))^\alpha, \qquad \text{with} \quad 0 < \alpha < 1.
\end{equation*}
A direct computation shows that 
\begin{equation*}
 \Delta u(x,y) = 8 \alpha \frac{xy}{x^2 + y^2} (\log(x^2+y^2))^{\alpha-1} + 4 \alpha(\alpha-1) \frac{xy}{x^2 + y^2} (\log(x^2+y^2))^{\alpha-2},
\end{equation*}
so that $f(x,y) := \Delta u(x,y)$ extends to a continuous function on $\R^2$, which is not Dini continuous at the point $(0,0)$. On the other hand, the derivative $\partial_x\partial_y u(x,y)$ is unbounded near the origin. We also point out that, when $\alpha =1$, the function $u$ is a counterexample for the $L^\infty$ bounds of the second order derivatives of weak solutions to $\Delta u= f$. \footnote{We acknowledge that this counterexample was pointed out to one of the authors by Andreas Minne during the Workshop ``New trends in PDEs'', held in Catania on 29-30 May 2018.}

We finally consider the non-constant coefficients operator $\LL$ defined as follows 
\begin{equation} \label{e-Kolm-var}
   \LL :=\sum_{i,j=1}^m a_{ij}(x,t) \partial^{2}_{x_i x_j}+\sum_{i,j=1}^N b_{ij}x_j\partial_{x_i}-\partial_t,
%   & =\text{Tr}(A D^{2}) + \langle B x, D  \rangle-\partial_t,
\end{equation}
We assume that the coefficients $a_{ij}$ are Dini continuous functions and, in order to simplify the notation, we write
\begin{equation} \label{eq-omega-a}
\omega_a(r) := \max_{i,j= 1, \dots, m}
\sup_{\substack{(x,t), (\x,\t)\in H \\ d_K((x,t) ,(\x,\t))<r}} |a_{ij}(x,t)-a_{ij}(\x, \t)|.
\end{equation}
We assume that the following condition on the matrix $A(x,t):=(a_{ij}(x,t))_{i,j=1,\ldots,m}$ is satisfied.

\medskip
\noindent
{\bf[H.2]} \ For every $(x,t) \in \R^{N+1}$, the matrix $A(x,t)$ is symmetric and satisfies
   \begin{equation}
 			   \label{A}
 			   \lambda |\xi|^2 \le \langle A(x,t) \xi, \xi \rangle \le \Lambda |\xi|^2, \qquad \text{for every} \quad \xi \in \R^m,
\end{equation}
for some positive constants $\lambda, \Lambda$. 

\medskip

\begin{theorem} \label{th-2}
Let $\LL$ be an operator in the form \eqref{e-Kolm-var} satisfying the hypotheses {\rm \bf[H.1]}  and {\rm \bf[H.2]}. Let $u\in C^2_\L(\H_{1}(0,0))$ be a classical solution to $\LL u=f$. Suppose that $f$ and the coefficients $a_{ij}$, $i,j= 1, \dots, m$, are Dini continuous. Then for any points $(x,t)$ and $ (\x,\t) \in \H_{\frac12}(0,0)$ the following holds:
\begin{equation*}
\begin{split}
|\partial^2 u(x,t)-\partial^2 u(\x, \t)| \le & c\Big( d \sup_{\H_{1}(0,0)} |u| + 
d \sup_{\H_{1}(0,0)}|f| + \int_0 ^d \frac{\omega_{f}(r)}{r}+ d \int_{d} ^1 \frac{\omega_{f}(r)}{r^2}\Big)  \\
+ & c \bigg( \sum_{i,j=1}^{m} \sup_{\H_{1}(0,0)} |\ \partial^2_{x_i x_j} u| \bigg)
\Big(\int_0 ^d \frac{\omega_{a}(r)}{r}+ d \int_{d} ^1 \frac{\omega_{a}(r)}{r^2}\Big).
\end{split}
\end{equation*}
where $d=d_K ( (x,t), (\x,\t))$ and $\partial^2$ stands either for $\partial^2_{x_i x_j}$,$i,j=1,\ldots,m$, or for $Y$.
\end{theorem}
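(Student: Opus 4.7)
The plan is a classical freezing-of-coefficients argument reducing Theorem \ref{th-2} to the constant-coefficient estimate of Theorem \ref{th-1}, with the Dini continuity of the $a_{ij}$ providing just enough regularity for the resulting perturbation term to be absorbable.

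Fix $z_1 := (x,t)$ and $z_2 := (\x,\t)$ in $\H_{1/2}(0,0)$, set $d := d_K(z_1, z_2)$, and freeze the leading coefficients of $\LL$ at $z_1$ to form the constant-coefficient operator
$$\L_{z_1} v := \sum_{i,j=1}^m a_{ij}(z_1)\, \partial_{x_i x_j}^2 v + Y v,$$
which satisfies [H.1] uniformly in $z_1$ by [H.2]. Then $u$ is a classical solution to $\L_{z_1} u = g_{z_1}$, where
$$g_{z_1}(\zeta) := f(\zeta) + \sum_{i,j=1}^m \bigl( a_{ij}(z_1) - a_{ij}(\zeta) \bigr)\, \partial_{x_i x_j}^2 u(\zeta).$$
Choose a radius $\rho_0 \in (0, 1/2]$ to be fixed later. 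After left-translating by $z_1$ and dilating by $\delta_{\rho_0}$ (using the group invariance of $Y$ with respect to $\circ$ and the dilation covariance of the principal part of $\L_{z_1}$, with the extra first-order terms in $B$ giving only higher-order corrections at small scales), I apply Theorem \ref{th-1} part ii) to $u$ on the ball $\H_{\rho_0}(z_1) \subset \H_1(0,0)$.

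The crucial step is bounding the modulus of continuity of $g_{z_1}$ on $\H_{\rho_0}(z_1)$. Using the algebraic identity $ab - cd = a(b-d) + (a-c)d$ with $a = a_{ij}(z_1) - a_{ij}(\zeta)$, $c = a_{ij}(z_1) - a_{ij}(\eta)$, $b = \partial_{x_i x_j}^2 u(\zeta)$, $d = \partial_{x_i x_j}^2 u(\eta)$ yields, for $\zeta, \eta \in \H_{\rho_0}(z_1)$ with $d_K(\zeta,\eta) < r$,
\begin{equation*}
\omega_{g_{z_1}}(r) \le \omega_f(r) + m^2\, \omega_a(r) \sup_{\H_1(0,0)} |\partial^2 u| + m^2\, \omega_a(\rho_0)\, \tilde\omega(r),
\end{equation*}
where $\tilde\omega(r) := \sup\bigl\{ |\partial^2 u(\zeta) - \partial^2 u(\eta)| : \zeta, \eta \in \H_{\rho_0}(z_1),\ d_K(\zeta,\eta) \le r \bigr\}$ is precisely the quantity we want to control. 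The first two terms generate, through the Dini integrals in Theorem \ref{th-1} ii), the clean contributions involving $\omega_f$ and $\omega_a \sup|\partial^2 u|$ that appear in the statement. The third term is circular, but by Dini continuity $\omega_a(\rho_0) \to 0$, so fixing $\rho_0$ once and for all such that $c\, m^2 \omega_a(\rho_0) \le 1/2$ — with $c$ the constant from Theorem \ref{th-1} ii) — absorbs $\tilde\omega(d)$ into the left-hand side of the functional inequality for $\tilde\omega$.

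The auxiliary uniform bound $\sup_{\H_1(0,0)} |\partial^2 u|$ is obtained by the same scheme applied to Theorem \ref{th-1} part i), again absorbing the $\omega_a(\rho_0) \sup|\partial^2 u|$ contribution thanks to the smallness of $\omega_a(\rho_0)$. The case $d \ge \rho_0$ is handled trivially by this uniform bound and by the boundedness of the Dini integrals on $(\rho_0, 1)$. The main obstacle is the absorption step: $\rho_0$ must be chosen depending only on the function $\omega_a$ — independently of $z_1, z_2$ and $d$ — so that the resulting constants are geometric and uniform, and then the two regimes $d < \rho_0$ and $d \ge \rho_0$ must be recombined so that the Dini integrals on $(0,d)$, $(d, \rho_0)$, and $(\rho_0, 1)$ reassemble precisely into the unified integrals $\int_0^d \omega_f(r)/r\, dr$ and $d \int_d^1 \omega_f(r)/r^2\, dr$ (and their analogues for $\omega_a$) appearing in the statement.
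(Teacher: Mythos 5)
Your overall scheme (freeze the leading coefficients, view $u$ as a solution of a constant-coefficient Kolmogorov equation with an extra perturbation on the right, use [H.2] for uniformity of the constants) is indeed the mechanism behind the paper's proof. But the way you carry it out has a genuine gap.

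You propose to apply Theorem~\ref{th-1}~ii) as a black box to the equation $\L_{z_1} u = g_{z_1}$, with
$g_{z_1}(\zeta) = f(\zeta) + \sum_{i,j}\bigl(a_{ij}(z_1)-a_{ij}(\zeta)\bigr)\partial^2_{x_i x_j} u(\zeta)$.
Theorem~\ref{th-1}~ii) assumes that the right-hand side is Dini continuous. But Dini continuity of $g_{z_1}$ forces Dini continuity of $\partial^2_{x_i x_j} u$ (since $a_{ij}$ is merely Dini and $\partial^2_{x_i x_j} u$ appears multiplied by $a_{ij}(z_1)-a_{ij}(\zeta)$), which is essentially the conclusion you are trying to prove. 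Your own estimate makes this visible: the bound for $\omega_{g_{z_1}}(r)$ contains the term $m^2\,\omega_a(\rho_0)\,\tilde\omega(r)$, where $\tilde\omega$ is the unknown modulus of continuity of $\partial^2 u$. When this is fed into the integrals from Theorem~\ref{th-1}~ii), you would need $\int_0^d \tilde\omega(r)/r\,dr < \infty$ simply for the right-hand side to be a number — yet that finiteness is exactly what is at stake. The proposed absorption only has a chance of working once you already know those integrals are finite; as it stands, the functional inequality you are trying to set up does not make sense.

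The paper avoids this by never needing the \emph{modulus of continuity} of the perturbation — only its $L^\infty$ norm. Concretely, in the frozen-coefficient Dirichlet problems \eqref{dirichlet3} the error $v_k = u - u_k$ satisfies an equation whose right-hand side is $f - f(0) + \sum_{i,j}\bigl(a_{ij}(0)-a_{ij}\bigr)\partial^2_{x_i x_j}u$, and the maximum principle (Lemma~\ref{lemma-max}) only requires $\sup_{\H_k}$ of that quantity. This is bounded by $\omega_f(\varrho^k) + m^2\,\omega_a(\varrho^k)\,\eta$ with $\eta := \max_{i,j}\|\partial^2_{x_i x_j} u\|_{L^\infty(\H_1)}$, a quantity that is Dini in $\varrho^k$ because $\omega_f$ and $\omega_a$ are Dini and $\eta$ is a constant appearing on the right-hand side of the statement. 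No bootstrap, no smallness of $\omega_a(\rho_0)$, and no choice of $\rho_0$ is needed. If you want to salvage your route, you should not invoke Theorem~\ref{th-1}~ii) directly but rather rerun its proof, replacing every use of $\omega_f(\varrho^k)$ (which there always enters through $\sup_{\H_k}|f-f(0)|$, never through a genuine modulus-of-continuity bound inside the integrals) by $\omega_f(\varrho^k)+\omega_a(\varrho^k)\,\eta$; at that point you will find that the absorption step is not required at all.

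Two minor points: the ``auxiliary uniform bound $\sup_{\H_1}|\partial^2 u|$'' you propose to prove is not needed — the theorem's conclusion carries this sup on its right-hand side as a datum of the estimate, not as something to be controlled. And the remark about ``the extra first-order terms in $B$ giving only higher-order corrections at small scales'' glosses over the fact that Propositions~\ref{lem-apriori}, \ref{corollary}, \ref{mean-value-lem} are already stated for the non–dilation-invariant operator, so no such rescaling argument is necessary.
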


We now compare our main findings with the current literature on this subject. We first consider functions $f$ defined on $H \subset \R^{N+1}$ that are H\"older continuous with respect to the distance \eqref{e-ps.dist}, that is
\begin{equation} \label{eq-Holder}
  |f(x,t)-f(\x, \t)| \le M \, d_K((x,t) ,(\x,\t))^\alpha,  \qquad \text{for every} \quad (x,t), (\x,\t) \in H,
\end{equation}
for some constants $M>0$ and $\alpha \in ]0, 1]$. In this case we say that $f \in C^{0,\alpha}_L(H)$ and we let
\begin{equation*} \label{eq-Holder-norm}
  \|f \|_{C^{0,\alpha}_L(H)} \:= \sup_{H} |f| + \inf \big\{ M \ge 0 \mid \eqref{eq-Holder} \ \text{holds} \big\}.
\end{equation*}
When $\alpha < 1$ we write $C^{\alpha}_L(H)$ instead of $C^{0,\alpha}_L(H)$.
As a direct consequence of Theorem \ref{th-2}  we have
\begin{corollary} \label{cor-1}
Let $u\in C^2_\L(\H_{1}(0,0))$ be a classical solution to $\LL u=f$. Suppose that $f$ and the coefficients $a_{ij}$, $i,j= 1, \dots, m$, belong to $C^{0,\alpha}_L(\H_1(0,0))$. Then for any points $(x,t)$ and $(\x,\t) \in \H_{\frac12}(0,0)$ the following holds:
\begin{equation*}
\begin{split}
 |\partial^2 u(x,t)-\partial^2 u(\x, \t)| \le & c \, d^\alpha \bigg(  \sup_{\H_{1}(0,0)} |u| + 
 \frac{\|f \|_{C^{\alpha}_L(\H_1(0,0))}}{\alpha(1-\alpha)} \\ 
 & + \sum_{i,j=1}^{m} \sup_{\H_{1}(0,0)} |\ \partial^2_{x_i x_j} u| 
 \frac{\|a \|_{C^{\alpha}_L(\H_1(0,0))}}{\alpha(1-\alpha)} \bigg) , 
 \qquad \text{if} \quad \alpha  <1,\\
|\partial^2 u(x,t)-\partial^2 u(\x, \t)| \le & c\, d \bigg( \sup_{\H_{1}(0,0)} |u| + 
\|f \|_{C^{0,1}_L(\H_1(0,0))} | \log \; d| \\
& + \bigg(\sum_{i,j=1}^{m} \sup_{\H_{1}(0,0)} |\ \partial^2_{x_i x_j} u| \bigg)
\|a \|_{C^{0,1}_L(\H_1(0,0))} | \log \; d| \bigg), \quad \text{if} \quad \alpha =1.
\end{split}
\end{equation*}
\end{corollary}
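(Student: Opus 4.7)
My plan is to view Corollary \ref{cor-1} as a direct consequence of Theorem \ref{th-2}: substituting the H\"older bounds $\omega_{f}(r)\le \|f\|_{C^{\alpha}_L(\H_1(0,0))}\,r^{\alpha}$ and $\omega_{a}(r)\le \|a\|_{C^{\alpha}_L(\H_1(0,0))}\,r^{\alpha}$ into the estimate of Theorem \ref{th-2} reduces the proof to computing four elementary integrals in~$r$. Since $(x,t),(\x,\t)\in \H_{1/2}(0,0)$, the quasi-distance $d=d_K((x,t),(\x,\t))$ is bounded by a constant depending only on the pseudo-triangular constant ${\bf c}_H$, and after a harmless rescaling we may assume $d\le 1$, so that $d\le d^{\alpha}$ for every $\alpha\in(0,1]$.

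For the subcritical regime $\alpha<1$ I would compute
$$
\int_0^d \frac{\omega_{f}(r)}{r}\,dr \le \|f\|_{C^{\alpha}_L(\H_1(0,0))} \int_0^d r^{\alpha-1}\,dr = \frac{d^{\alpha}}{\alpha}\,\|f\|_{C^{\alpha}_L(\H_1(0,0))},
$$
and
$$
d\int_d^1 \frac{\omega_{f}(r)}{r^2}\,dr \le \|f\|_{C^{\alpha}_L(\H_1(0,0))}\,d \int_d^1 r^{\alpha-2}\,dr = \frac{d^{\alpha}-d}{1-\alpha}\,\|f\|_{C^{\alpha}_L(\H_1(0,0))} \le \frac{d^{\alpha}}{1-\alpha}\,\|f\|_{C^{\alpha}_L(\H_1(0,0))}.
$$
Summing these gives exactly the factor $\frac{d^{\alpha}}{\alpha(1-\alpha)}\,\|f\|_{C^{\alpha}_L}$ appearing in the statement, since $\tfrac{1}{\alpha}+\tfrac{1}{1-\alpha}=\tfrac{1}{\alpha(1-\alpha)}$. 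The remaining term $d\sup|f|$ coming from Theorem \ref{th-2} is dominated by $\|f\|_{C^{\alpha}_L}\,d\le \|f\|_{C^{\alpha}_L}\,d^{\alpha}$ and is absorbed in the same factor. Carrying out the identical computation with $\omega_f$ replaced by $\omega_a$ controls the coefficient contribution, and multiplying by $\sum_{i,j=1}^m\sup_{\H_1(0,0)}|\partial^2_{x_i x_j}u|$ yields the first inequality of the corollary.

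For the borderline case $\alpha=1$ only the two integrals change shape:
$$
\int_0^d \frac{\omega_{f}(r)}{r}\,dr \le \|f\|_{C^{0,1}_L(\H_1(0,0))}\,d,\qquad d\int_d^1 \frac{\omega_{f}(r)}{r^2}\,dr \le \|f\|_{C^{0,1}_L(\H_1(0,0))}\,d\int_d^1\frac{dr}{r}=\|f\|_{C^{0,1}_L(\H_1(0,0))}\,d\,|\log d|.
$$
The second expression is the dominant one for small $d$ and produces the logarithmic factor in the statement; the analogous estimate for $\omega_a$ is identical. No genuine obstacle arises here: the argument is a pure two-case computation and the only mildly delicate point is tracking the blow-up of the constant as $\alpha\to 0^+$ or $\alpha\to 1^-$, which is precisely what the $\frac{1}{\alpha(1-\alpha)}$ prefactor and the $|\log d|$ term record.
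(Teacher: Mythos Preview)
Your proposal is correct and is exactly the argument the paper has in mind: the paper states Corollary~\ref{cor-1} ``as a direct consequence of Theorem~\ref{th-2}'' with no further proof, and your computation of the four elementary integrals $\int_0^d r^{\alpha-1}\,dr$, $d\int_d^1 r^{\alpha-2}\,dr$ (and their $\alpha=1$ counterparts) is precisely how that consequence is obtained. One cosmetic remark: in the endpoint case $\alpha=1$ the contributions $d\sup|f|$ and $\int_0^d\omega_f(r)/r\,dr\le \|f\|_{C^{0,1}_L}\,d$ are not literally dominated by $d\,|\log d|$ when $d$ is close to~$1$, so strictly speaking the right-hand side should carry a factor $1+|\log d|$ rather than $|\log d|$; this is a harmless imprecision in the stated inequality (and in your write-up you already flag that the logarithmic term dominates only for small~$d$), not a gap in the method.
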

Note that, for $\alpha <1$, Corollary \ref{cor-1} restores the Schauder estimates previously proved by Manfredini in \cite{Manfredini}, and by Di Francesco and Polidoro in \cite{DiFrancescoPolidoro}. Note that, in this case, an interpolation inequality allows us to state a bound for the $C^{\alpha}_L$ norm of the derivatives $\partial^2 u$ in terms of $\|a \|_{C^{\alpha}_L (\H_{1}(0,0))}, \|f \|_{C^{\alpha}_L(\H_{1}(0,0))}$, and $\sup_{\H_{1}(0,0)} |u| $ only. We also recall that Schauder estimates in the framework of semigroups have been proved by Lunardi \cite{Lunardi}, Lorenzi \cite{Lorenzi}, Priola \cite{Priola}.  Theorems \ref{th-1} and \ref{th-2} improve the previous ones, not only because we weaken the regularity assumption on $f$ and on the coefficients $a_{ij}$'s, but also because the Schauder estimate for $\alpha = 1$ is not given in the aforementioned articles. We also quote analogous results obtained in the framework of stochastic theory (see Menozzi \cite{Menozzi} and its bibliography).
 
\medskip

The proof of our main results is based on the method introduced by Wang \cite{Wang} in the study of the Poisson equation, which greatly simplifies the other approaches previously used in literature. Wang considers in \cite{Wang} a solution $u$ to the equation $\Delta u = f$ in some open set $\Omega$. Without loss of generality, he assumes that the unit ball $B_1(0)$ is contained in $\Omega$ and considers a sequence of Dirichlet problems as follows. Let $B_{r_k}(0)$ be the Euclidean ball centered at the origin and of radius $r_k = \frac{1}{2^k}$, and let $u_k$ be the solution to the Dirichlet problem
\begin{equation*}
 \Delta u_k = f(0), \quad \text{in} \quad B_{r_k}(0), \qquad u_k = u \quad \text{in} \quad  \partial B_{r_k}(0).
\end{equation*}
Quantitative information on the derivatives of every solution $u_k$ is obtained by using only the elementary properties of the Laplace equation, namely the weak maximum principle, and the standard apriori estimates of the derivatives, that are obtained in \cite{Wang} via mean value formulas. The bounds for the derivatives of $u$ are obtained as the limit of the analogous bounds for $u_k$. The Taylor expansion in this step is crucial to conclude the proof. 

In this work we apply the method described above to degenerate Kolmogorov operators $\L$, by adapting Wang's approach to the non-Euclidean structure defined in \eqref{e70}. In particular, the ball $B_{r_k}(0)$ is replaced by the box $\H_{r_k}(0,0)$ defined through the dilation $\d_{r_k}$ introduced in \eqref{e-dilations}. Concerning the Taylor expansion, we recall the results due to Bonfiglioli \cite{Bonfiglioli} and the ones proved by Pagliarani, Pascucci and Pignotti \cite{PagliaPascPigno}. We emphasize that the authors of the above articles assume that the second order derivatives of the function $u$ are H\"older continuous, while we only require that $u$ belongs to the space $C^2_\L(\Omega)$ introduced in Definition \ref{def-C2}. As the regularity of the second order derivatives of $u$ is the very subject of this note, we don't assume extra conditions on them and we prove in Proposition \ref{taylor} the Taylor approximation under the minimal requirement that $u \in C^2_\L(\Omega)$.

\medskip

We conclude this introduction with a short discussion about the applicative and theoretical interest in the operator $\L$. A simple meaningful example is the operator introduced by Kolmogorov in \cite{Kolmogorov}, defined for $(x,t) = (v,y,t) \in \R^{m} \times \R^m \times \R$ as follows
\begin{equation} \label{e-Kolm-costc0}
   \K := \sum_{j=1}^m \partial^{2}_{x_j} - \sum_{j=1}^m x_j \partial_{x_{m+j}}-\partial_t 
   = \Delta_v - \langle v, D_y \rangle - \partial_t.
\end{equation} 
The operator $\K$ can be written in the form \eqref{e-Kolm-costc} with $\kappa = 1, m_1 = m$, and 
   \begin{equation} \label{B_K}
 			   B = \begin{pmatrix}
 			       \OO & \OO  \\ - I_m & \OO 
   			 \end{pmatrix}
\end{equation}
It arises in several areas of application of PDEs. In particular, in kinetic theory the density $u$ of particles, with velocity $v$ and position $y$ at time $t$, satisfies equation $\K u = 0$. In this setting, the Lie group has a quite natural intepretation. Indeed the composition law \eqref{e70} agrees with the \emph{Galilean} change of variables 
\begin{equation*}
    (v,y,t) \circ (v_{0}, y_{0}, t_{0}) = (v_{0} + v, y_{0} + y + t v_{0}, t_{0} + t), 
    \qquad (v,y,t), (v_{0}, y_{0}, t_{0}) \in \R^{2m+1}.
\end{equation*}
It is easy to see that $\K$ is invariant with respect to the above change of variables. Specifically, if $w(v,y,t) = u (v_{0} + v, y_{0} + y + t v_{0}, t_{0} + t)$ and $g(v,y,t) = f(v_{0} + v, y_{0} + y + t v_{0}, t_{0} + t)$, then 
\begin{equation*}
	\K u = f \quad \iff \quad \K w = g \quad \text{for  every} \quad (v_{0}, y_{0}, t_{0}) \in \R^{2m+1}.
\end{equation*}
As the matrix $B$ in \eqref{B_K} is in the form \eqref{B_0}, $\K$ is invariant with respect to the dilatation $\delta_r(v,y,t) := (r v, r^3 y, r^2 t)$. Note that the dilatation acts as the usual parabolic scaling with respect to the variable $v$ and $t$. The term $r^3$ in front of $y$ is due to the fact that the velocity $v$ is the derivative of the position $y$ with respect to time $t$. For a more exhaustive description of the operator $\L$, and of its applications, we refer to the survey article \cite{AnceschiPolidoro} by Anceschi and Polidoro and to its bibliography. 
%In Section 2 we will see that the scaling invariance of the operator $\L$ is equivalent to {\rm \bf[H.H]}.

After the work of Kolmogorov \cite{Kolmogorov} where \eqref{e-Kolm-costc0} was introduced, and H\"{o}rmander's celebrated article \cite{Hormander} on the hypoellipticity of second order degenerate linear operators, the regularity theory for operators that are invariant with respect to a Lie group structure has been widely developed by many authors. We quote here the seminal works by Folland \cite{Folland}, Folland and Stein \cite{FollandStein}, Rotschild and Stein \cite{RothschildStein}, Nagel, Stein and Wainger \cite{nagelstein}. We also refer to the monograph by Bonfiglioli, Lanconelli and Uguzzoni \cite{BLU} that contains an updated  description of this theory. Wei, Jiang, and Wu adapt in \cite{WeiJiangWu} the method introduced by Wang \cite{Wang} and prove Schauder estimates for hypoelliptic degenerate operators on the Heisenberg group. The Taylor formula used in \cite{WeiJiangWu} is proved by Arena, Caruso and Causa in \cite{ArenaCarusoCausa2010}. In a different framework, Wang's method has been used by Bucur and Karakhanyan \cite{BucurKarakhanyan} in the study of fractional operators. 

\medskip

This paper is structured as follows. In Section 2, we recall the basic facts about the analysis on Lie groups we need in our treatment. It also contains some recalls about the fundamental solution of the operator $\L$. In Section 3 we prove some preliminary results. In particular, we obtain some \emph{a priori} estimates of the derivatives of the solutions $u$ to $\L u = 0$ in terms of the $L^\infty$ norm of $u$. In Section 4 we prove our main result on the Taylor approximation of any function $u \in C^2_\L(\Omega)$. Section 5 contains the proof of Theorem \ref{th-1}, while Section 6 contains the proof of Theorem \ref{th-2}. 

\setcounter{equation}{0}\setcounter{theorem}{0}
\section{Lie Group Invariance and Fundamental Solution}
%It is well-known that the natural framework to study operators satisfying a H\"ormander condition is the analysis on Lie groups. 
Here we discuss the invariance properties of Kolmogorov operators with respect to the Lie Group structure $\mathbb{K}=(\R^{N+1},\circ)$ introduced in \eqref{e70}. Moreover, we recall some known results concerning the fundamental solution of $\L$.

We first introduce some further notation. As the constant matrix $A$ is symmetric and positive, there exists a symmetric and positive matrix $A^{1/2} = \left( \overline{a}_{ij} \right)_{i,j= 1, \dots,m}$ such that $A = A^{1/2} A^{1/2}$. 
% it is not restrictive to assume that there exists a basis of $\R^N$ such that $\L$ takes the following form
% \es{e-Kolm-costc1}
% {\L &=\sum_{i,j=1}^m \partial^{2}_{x_i x_j}+\sum_{i,j=1}^N b_{ij}x_j\partial_{x_i}-\partial_t\\
% &=\Delta_{m} + \langle B x, D  \rangle-\partial_t,
% }
% where $m \in \N, 1 \leq m \leq N$ is the rank of $A$. 
In order to check the hypothesis {\bf[H.1]}, we write $\L$ in terms of vector fields as follows
$$ \L=\sum_{i=1}^m X_i^2 +Y,$$
where
\begin{equation}\label{e-XiY}
 X_i := \sum_{j=1}^m \overline{a}_{ij} \partial_{x_j},\quad i=1,\ldots,m, \qquad Y := \langle B x, D \rangle -\partial_t,
\end{equation}
We recall that assumption {\bf [H.1]} is implied by H\"{o}rmander's condition (see \cite{Hormander}):
\begin{equation}\label{e-Horm}
{\rm rank\ Lie}\left(X_1,\dots,X_m,Y\right)(x,t) =N+1,\qquad \forall \, (x,t) \in \R^{N+1}.
\end{equation}
%Yet another condition, equivalent to {\bf [H.1]}, (see \cite{LanconelliPolidoro}), is that 
%{\it there exists a basis for $\R^N$ such that the matrix $B$ has the form}
%   \begin{equation}
% 			   \label{B}
% 			   B = \begin{pmatrix}
% 			       * &  * & \ldots & * & *  \\
% 			       B_1 & * &  \ldots & * & * \\
%     				   0 & B_{2}  & \ldots & * & * \\
%     				   \vdots & \vdots  & \ddots & \vdots & \vdots  \\
%     				   0 & 0  & \ldots & B_{\k} & * 
%   			 \end{pmatrix}
%   			% =
%      %\begin{pmatrix}
%        % B_{0,0} &    B_{0,1}  & \ldots &     B_{0, \k - 1}  &   B_{0, \k }  \\  
%        %B_1   &    B_{1,1}  & \ldots &     B_{\k - 1 , 1}  &   B_{\k , 1}  \\
%        %\OO    &    B_2  & \ldots &   B_{\k - 1, 2}   &  B_{\k , 2}   \\
%        %\vdots & \vdots & \ddots & \vdots & \vdots \\
%        %\OO    &  \OO    &    \ldots & B_\k    &   B_{\k,\k}
%   %\end{pmatrix}
%\end{equation}
%{where every block $B_j$ is a $m_{j} \times m_{j-1}$ matrix of rank $m_j$ with $j = 1, 2, \ldots, \k$ (we agree to let $m_0 =m$).
%Moreover, the $m_j$s are positive integers such that 
%		\begin{equation}
%  			  \label{m-cond}
%   			 m_0 \ge m_1 \ge \ldots \ge m_\k \ge 1, \quad
%  			  {\rm and} \quad m_0 + m_1 + \ldots + m_\k = N,
%		\end{equation}
%and the entries of the blocks denoted by $*$ are arbitrary }. Finally, we set (\anna{Dobbiamo decidere se introdurlo qui o nella sezione precedente})
Yet another condition, equivalent to {\bf [H.1]}, (see \cite{LanconelliPolidoro}), is that
\begin{eqnarray*}
C(t) > 0, \quad \text{for every $t>0$},
\end{eqnarray*}
where
\begin{equation*} 	\label{c}
 C(t) = \int_{0}^{t} E(s) \, \begin{pmatrix}
 			       A & \OO  \\ \OO & \OO 
   			 \end{pmatrix} \, E^{T}(s) \, ds.
\end{equation*}

\medskip

We now recall that, under the the hypothesis of hypoellipticity, H\"{o}rmander constructed the fundamental solution of $\L$ as
\begin{equation*}
\Gamma(x,t,\x,\t)= \Gamma(x-E(t-\t)\x,t-\t),
\end{equation*}
where $\Gamma(x,t)=\Gamma(x,t,0,0)$ and $\Gamma(x,t)=0$ for every $ t \leq 0$, while
\begin{equation*}
\Gamma(x,t)= \frac{(4\pi)^{-\frac{N}{2}}}{\sqrt{\det C(t)}} \text{exp}\Big( -\frac{1}{4} \langle C^{-1}(t) x, x \rangle - t \, \text{tr} (B)\Big), \quad t > 0.
\end{equation*}
As a fundamental solution to $\L$, the following representation formula holds true: for every $u \in C_0^\infty(\rnn)$ we have
\begin{eqnarray}
u(z)=-\int_{\R^{N+1}}[\Gamma(z,\cdot)\L(u)](\z) d\z.
\end{eqnarray}
Here and in the sequel $z= (x,t)$ and $\z= (\x,\t)$ denote points of $\rnn$.
% where $u$ is a function such that $u,\partial_{x_{1}}u,\ldots,\partial_{x_{m}}u, Yu $ belongs to $L_{\loc}^{1}(\Omega)$ and $\Omega$ is an open set in $\R^{N+1}$.

\medskip

We now conclude the analysis of the Lie Group $\mathbb{K}$, providing tools that will be very useful to prove our main results. We adopt the notation of  \cite{LanconelliPolidoro} and we quote the results therein. For a given $\z \in \R^{N+1}$, we denote by $\ell_{\z}$ the left translation on $\mathbb{K}=(\R^{N+1},\circ)$ defined as follows
\begin{equation*}
	\ell_{\z}: \R^{N+1} \rightarrow \R^{N+1}, \quad \ell_{\z} (z) = \z \circ z.
\end{equation*}
Then the vector fields  $X_1, \dots, X_m$ and $Y$ are left-invariant, with respect to the group law \eqref{e70}, in the sense that
\begin{equation}
	X_j\left( u (\zeta \circ \, \cdot \, ) \right) = \left( X_j u \right) (\zeta \circ \, \cdot \, ), \quad j=1, \dots, m, \qquad Y \left( u (\zeta \circ \, \cdot \, ) \right) = \left( Y u \right) (\zeta \circ \, \cdot \, ),
\end{equation}
for every $\zeta \in \rnn$ and every $u$ sufficiently smooth. Hence, in particular, 
\begin{eqnarray*}
\L \circ \ell_{\z}=\ell_{\z} \circ \L \quad \textrm{or, equivalently,} \quad \L\left( u (\zeta \circ \, \cdot \, ) \right) = \left( \L u \right) (\zeta \circ \, \cdot \, ).
\end{eqnarray*}

Regarding the invariance with respect to the dilation introduced in \eqref{e-dilations}, we recall that the operator $\L_0$, obtained from $\L$ by replacing its matrix $B$ with $B_0$ in \eqref{B_0}, satisfies
 \begin{equation}
	\label{Ginv}
      	 \L_0 \left( u \circ \delta_r \right) = r^2 \delta_r \left( \L_0 u \right), \quad \text{for every} \quad r>0,
\end{equation}
for every function $u$ sufficiently smooth (see Proposition 2.2 in \cite{LanconelliPolidoro}). In this case, we say that $\mathbb{K}= \left(\R^{N+1},\circ, \left( \delta_r \right)_{r>0}\right)$ is a \emph{homogeneous Lie group}, and we have
 \begin{equation*}
	\label{distr}
      	 \delta_r \left( z \circ \z \right) =  \left( \delta_r z \right) \circ \left( \delta_r \z \right), \quad \text{for every} \quad z, \zeta \in \rnn \ \text{and} \ r>0.
\end{equation*}

As we rely on a blow-up argument, we also apply the dilation \eqref{e-dilations} to the general operator $\L$ satisfying {\rm \bf[H.1]}. Specifically, we define $\L_r$ as the \textit{scaled operator} of $\L$ in terms of $(\delta_r)_{r>0}$ as follows
\begin{equation}\label{Lscaled}
\L_r :=r^2 (\delta_r \circ \L \circ \delta_{\frac{1}{r}}),
\end{equation}
and we write its explicit expression in terms of the matrix $B$ and $(\delta_r)$ as
\begin{equation}
\L_r =\sum_{i,j=1}^m a_{ij}\partial^2_{x_i x_j} + Y_r, \quad r \in (0,1]
\end{equation}
where
\begin{equation}
Y_r:=\langle B_r x, D \rangle - \partial_t
\end{equation}
and $B_r := r^2 \delta_r B \delta_{\frac{1}{r}}$, i.e.,
\begin{equation}
    \label{B_r}
    B_r = 
    \begin{pmatrix}
        r^2 B_{0,0} &   r^4 B_{0,1}  & \ldots &    r^{2\k} B_{0, \k - 1}  &  r^{2\k+2} B_{0, \k }  \\  
        B_1   &   r^2 B_{1,1}  & \ldots &    r^{2\k-2} B_{1, \k - 1}  &   r^{2\k} B_{1 , \k}  \\
        \OO    &    B_2  & \ldots &  r^{2\k-4} B_{2, \k - 1}   &  r^{2\k-2} B_{2 , \k}   \\
        \vdots & \vdots & \ddots & \vdots & \vdots \\
        \OO    &  \OO    &    \ldots & B_\k    &   r^2 B_{\k,\k}
    \end{pmatrix}.
\end{equation}
Clearly, $\L_r=\L$ for every $r >0$ if and only if $B=B_0$, and the principal part operator $\L_0$ is obtained as the limit of \eqref{Lscaled} as $r \to 0$.

Setting $E_r(t)=\exp(-tB_r)$, we define the translation group related to $\L_r$ as
\begin{equation}\label{circ_r}
(x,t) \circ_r (\xi,\tau)=(\xi + E_r(\tau)x, t+\tau),\quad (x,t), (\xi,\tau) \in \mathbb{R}^{N+1}.
\end{equation}
\begin{remark}\label{oss_circ}
As it will be useful in the blow-up limit procedure, we point out that the composition law defined in \eqref{circ_r} depends continuously on $r \in (0,1]$. Moreover, taking $r=0$ in \eqref{B_r} we find the matrix $B_0$ and ``$\; \circ_r \!\!$'' in \eqref{circ_r} simply becomes the composition law related to the dilation-invariant operator $\L_0$. Thus, ``$\; \circ_r \!\!$'' is a continuous function on the compact set $[0,1]$.
\end{remark}

The \emph{homogeneous dimension of $\R^{N+1}$ with respect to $(\delta_r)_{r >0}$} is the integer $Q+2$, where $Q$ is the so called \emph{spatial homogeneous dimension of $\R^{N+1}$}
\begin{equation}
	\label{hom-dim}
	Q := m + 3m_{1} + \ldots + (2\k+1) m_{\kappa}.
\end{equation}
We observe that the following equation holds true
\begin{equation*}
	 \det \delta_r = r^{Q + 2} \qquad \text{for every} \ r > 0.
\end{equation*}

\medskip

We now recall the notion of \emph{homogeneous function} in a homogeneous group. We say that a function $u$ defined on $\rnn$ is homogeneous of degree $\alpha \in \R$  if
\begin{equation*}
    u(\delta_{r}(z)) = r^{\alpha} u(z) \qquad \text{for every } \, z \in \R^{N+1}.
	\end{equation*}
A differential operator $X$ will be called homogeneous of degree $\b \in \R$ with respect to $(\delta_{r})_{r \ge 0}$ if 
\begin{equation*}
	X u (\delta_{r}(z)) = r^{\b} \left( X u \right) (\delta_{r}(z) ) \qquad \text{for every } \, z \in \R^{N+1},
\end{equation*}
and for every sufficiently smooth function $u$. Note that, if $u$ is homogeneous of degree $\alpha$ and $X$ is homogeneous of degree $\beta$, then $X u$ is homogeneous of degree $\alpha - \beta$. 
		
As far as we are concerned with the vector fields of the Kolmogorov operator $\L_0$ under the invariance assumption \eqref{Ginv}, we
have that $X_{1}, \ldots, X_{m}$ are homogeneous of degree $1$ and $Y$ is homogeneous of degree $2$ with respect to $(\delta_{r})_{r \ge 0}$. In particular, $\L_0 = \sum_{j=1}^m X_j^2 + Y$ is homogeneous of degree $2$, and its fundamental solution $\Gamma_0$ is a homogeneous function of degree $-Q$. As a direct consequence, the estimate $\Gamma_0(z,\zeta) \leq \frac{c}{\| \zeta^{-1}\circ z \|_K^Q}$ holds for every $z, \zeta \in \R^{N+1}$, with $z \ne \zeta$. Analogous bounds hold for the first order and second order derivatives of $\Gamma_0$, as they are homogeneous of degree $-Q-1$ and $-Q-2$, respectively.

In the sequel, as we also consider the non dilation-invariant operator $\L$, we rely on the following estimates (see Proposition 2.7 in \cite{DiFrancescoPolidoro}). Let $z_0 \in \R^{N+1}$ and $R_0 >0$ be a given point and a given constant. Assume that all the eigenvalues of the matrix $A$ belong to some interval $[\lambda, \Lambda] \subset \R^+$. Then there exists a positive constant $c$, only depending on $\lambda, \Lambda$ and on the matrix $B$, such that the following bounds hold
%$\Gamma = \Gamma(\cdot,0)$ is homogeneous of degree $-Q$ with respect to the group dilatation $(\delta_r)_{r>0}$, while we have 
\begin{equation}\label{hom-1}
\begin{split}
 \Gamma(z,\zeta) & \leq \frac{c}{\| \zeta^{-1}\circ z \|_K^Q},\\ 
\vert \partial_{x_{j}} \Gamma(z,\zeta) &\vert \leq \frac{c}{\| \zeta^{-1}\circ z \|_K^{Q+1}},
\qquad \vert \partial_{\x_{j}} \Gamma(z,\zeta) \vert \leq \frac{c}{\| \zeta^{-1}\circ z \|_K^{Q+1}},\\
\vert \partial_{x_i x_{j}} \Gamma(z,\zeta) &\vert \leq \frac{c}{\| \zeta^{-1}\circ z \|_K^{Q+2}}, 
\qquad \vert \partial_{\x_i \x_{j}} \Gamma(z,\zeta) \vert \leq \frac{c}{\| \zeta^{-1}\circ z \|_K^{Q+2}},\\
\vert Y \Gamma(z,\zeta) &\vert \leq \frac{c}{\| \zeta^{-1}\circ z \|_K^{Q+2}}, 
\qquad \vert Y^* \Gamma(z,\zeta) \vert \leq \frac{c}{\| \zeta^{-1}\circ z \|_K^{Q+2}},
\end{split}
\end{equation}
for every $i,j=1, \dots, m$, $z, \zeta \in \H_{R_0}(z_0)$ with $z \ne \zeta$. Here $Y^*$ denotes the transposed operator of $Y$, defined as follows
\begin{eqnarray*}
\int_{\rnn} \phi(x,t) Y^{*} \psi(x,t) \, dx dt = \int_{\rnn}  \psi(x,t) Y \phi(x,t) \, dx dt,
\end{eqnarray*}
for every $\psi$, $\phi \in C^\infty_0(\mathbb{R}^{N+1})$.

A similar result holds for the derivatives $\partial_{x_{j}} \Gamma(z,\zeta)$ and $\partial_{\xi_{j}} \Gamma(z,\zeta)$ for $j = m+1, \dots, N$. These functions need to be considered as derivatives of order $\alpha_j$, where the integer $\alpha_j$ has been introduced in \eqref{e-dilations-cp}. We have 
\begin{equation}\label{hom-2}
\begin{split}
 \vert \partial_{x_{j}} \Gamma(z,\zeta) \vert \leq \frac{c}{\| \zeta^{-1}\circ z \|_K^{Q+\alpha_j}}, & \qquad
\vert \partial_{\xi_{k}} \Gamma(z,\zeta) \vert \leq \frac{c}{\| \zeta^{-1}\circ z \|_K^{Q+\alpha_k}}, \\
\vert \partial_{x_{j}} \partial_{\xi_{k}} \Gamma(z,\zeta) \vert \leq &\frac{c}{\| \zeta^{-1}\circ z \|_K^{Q+\alpha_j+ \alpha_k}},
\end{split}
\end{equation}
for every $j,k=1, \dots, N$, $z, \zeta \in \H_{R_0}(z_0)$ with $z \ne \zeta$. Note that, as $\alpha_1 = \dots = \alpha_m = 1$, the bounds in the first line of \eqref{hom-2} agree with the second line of \eqref{hom-1}. The proof of \eqref{hom-2} directly follows from the bound (2.59) and (2.60) in \cite{DiFrancescoPolidoro}.

We conclude this Section with the following corollary of the estimates \eqref{hom-1} and \eqref{hom-2}, which will be useful in the sequel.

\begin{lemma} \label{prop-ring}
 Assume that all the eigenvalues of the matrix $A$ belong to some interval $[\lambda, \Lambda] \subset \R^+$. Then there exist two positive constants $C$, only depending on $\lambda, \Lambda$ and on the matrix $B$, such that the following holds true. For every $R \in ]0,1]$ we have that 
\begin{equation}\label{sup-gamma}
\sup \left\{ \Gamma(z,\zeta) : z \in \H_{\frac{R}{2}}(0), \zeta \in \H_{R}(0) \setminus \H_{\frac{3R}{4}}(0) \right\} \le \frac{C}{R^Q}.
\end{equation}
 Moreover 
\begin{equation}\label{sup-gamma-derivatives}
\sup \left\{ \left|\partial_{x_{j}} \partial_{\xi_{k}} \Gamma(z,\zeta) \right| : z \in \H_{\frac{R}{2}}(0), \zeta \in \H_{R}(0) \setminus \H_{\frac{3R}{4}}(0) \right\} \le \frac{C}{R^{Q+\alpha_j+ \alpha_k}}.
\end{equation}
and
\begin{equation}\label{sup-gamma-derivatives-Y}
\sup \left\{ \left|Y \partial_{\xi_{k}} \Gamma(z,\zeta) \right| : z \in \H_{\frac{R}{2}}(0), \zeta \in \H_{R}(0) \setminus \H_{\frac{3R}{4}}(0) \right\} \le \frac{C}{R^{Q+2+ \alpha_k}}.
\end{equation}
for every $j,k=1, \dots, N$. 
\end{lemma}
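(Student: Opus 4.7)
The strategy is to separate two ingredients: a geometric lower bound on the quasi-distance $d_K(z,\zeta)$ on the region under consideration, and the pointwise estimates already recorded in \eqref{hom-1}--\eqref{hom-2}.

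First I would show that there exists a constant $c_0>0$, depending only on the matrix $B$, such that
\[
 d_K(z,\zeta) = \| \zeta^{-1}\circ z \|_K \ge c_0\, R \qquad \text{for every } z \in \H_{R/2}(0),\ \zeta \in \H_R(0)\setminus \H_{3R/4}(0),
\]
for all $R\in (0,1]$. Since all the relevant points, their inverses, and their products stay in a fixed bounded set $H\subset \R^{N+1}$ (for instance $\H_{\mathbf{c}}(0)$ with $\mathbf{c}$ large enough), the pseudo-triangular inequality \eqref{e-ps.tr.in} applies with a constant $\mathbf{c}_H$ depending only on $B$. Writing $\zeta = z \circ (z^{-1}\circ \zeta)$ and using \eqref{e-ps.tr.in}, I obtain
\[
 \|\zeta\|_K \le \mathbf{c}_H\bigl(\|z\|_K + \|z^{-1}\circ \zeta\|_K\bigr),
\]
so that $\|z^{-1}\circ \zeta\|_K \ge \tfrac{3R}{4\mathbf{c}_H} - \tfrac{R}{2}$; combined with the first inequality in \eqref{e-ps.tr.in}, which makes $\|\zeta^{-1}\circ z\|_K$ comparable to $\|z^{-1}\circ \zeta\|_K$, this yields the desired bound after choosing $c_0$ small enough. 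The small gap between the radii $R/2$ and $3R/4$ is exactly what is used here, so this is where a careful choice of constants has to be verified.

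Once the geometric lower bound is in hand, the three estimates \eqref{sup-gamma}--\eqref{sup-gamma-derivatives-Y} follow directly from the pointwise bounds in \eqref{hom-1} and \eqref{hom-2}. Indeed, the first line of \eqref{hom-1} gives
\[
 \Gamma(z,\zeta) \le \frac{c}{\|\zeta^{-1}\circ z\|_K^Q} \le \frac{c}{(c_0 R)^Q} = \frac{C}{R^Q},
\]
proving \eqref{sup-gamma}. Similarly, \eqref{hom-2} gives
\[
 |\partial_{x_j}\partial_{\xi_k}\Gamma(z,\zeta)| \le \frac{c}{\|\zeta^{-1}\circ z\|_K^{Q+\alpha_j+\alpha_k}} \le \frac{C}{R^{Q+\alpha_j+\alpha_k}},
\]
which is \eqref{sup-gamma-derivatives}.

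For the third estimate \eqref{sup-gamma-derivatives-Y} involving the mixed operator $Y\partial_{\xi_k}$, the argument is identical provided one knows a pointwise bound of the form $|Y\partial_{\xi_k}\Gamma(z,\zeta)| \le c/\|\zeta^{-1}\circ z\|_K^{Q+2+\alpha_k}$; this is the main obstacle since it is not listed explicitly in \eqref{hom-1}--\eqref{hom-2}. It can be obtained either by differentiating the explicit Gaussian expression for $\Gamma$ and rerunning the argument of \cite{DiFrancescoPolidoro}, or, more conveniently, by using $\L\Gamma(\,\cdot\,,\zeta)=0$ off the diagonal to replace $Y$ with the sum of the second order $x$-derivatives multiplied by the bounded coefficients $a_{ij}$; the bound then reduces to \eqref{sup-gamma-derivatives} with $\alpha_j=1$ plus an absorbable lower order term. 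Once this pointwise estimate is justified, the conclusion follows by the same substitution $\|\zeta^{-1}\circ z\|_K\ge c_0 R$ as above, which completes the proof.
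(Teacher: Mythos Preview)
Your overall strategy matches the paper's: reduce to a uniform lower bound $\|\zeta^{-1}\circ z\|_K \ge c_0 R$ and then plug into \eqref{hom-1}--\eqref{hom-2}. The gap is in how you obtain that lower bound. From $\|\zeta\|_K \le \mathbf{c}_H\bigl(\|z\|_K + \|z^{-1}\circ\zeta\|_K\bigr)$ you deduce
\[
 \|z^{-1}\circ\zeta\|_K \ge \frac{3R}{4\mathbf{c}_H} - \frac{R}{2},
\]
but this is positive only when $\mathbf{c}_H < 3/2$, a constraint that is \emph{not} satisfied in general (already for the simplest Kolmogorov group the pseudo-triangular constant exceeds $3/2$, and in the non-dilation-invariant case $\mathbf{c}_H$ depends on $B$ through $E(\tau)$ with no such control). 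Your own caveat that ``a careful choice of constants has to be verified'' is exactly the point where the argument breaks, and it cannot be repaired by rearranging the pseudo-triangular inequality: any such manipulation produces $\frac{3R}{4}$ divided by a constant $\ge 1$ minus $\frac{R}{2}$, which is not guaranteed positive.

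The paper circumvents this by a compactness argument that does not rely on the size of $\mathbf{c}_H$. One rescales $\bar z := \delta_{1/R}(z)\in\overline{\H_{1/2}(0)}$ and $\bar\zeta := \delta_{1/R}(\zeta)\in\overline{\H_1(0)\setminus\H_{3/4}(0)}$; under this rescaling the product $\zeta^{-1}\circ z$ becomes $\bar\zeta^{-1}\circ_R \bar z$, where $\circ_R$ is the composition law of the scaled operator $\L_r$ (see \eqref{circ_r} and Remark~\ref{oss_circ}). The function $(\bar z,\bar\zeta,R)\mapsto \|\bar\zeta^{-1}\circ_R\bar z\|_K$ is continuous on the compact set $\overline{\H_{1/2}(0)}\times\overline{\H_1(0)\setminus\H_{3/4}(0)}\times[0,1]$ and strictly positive there (the two $\bar z,\bar\zeta$ regions are disjoint), so by Weierstrass it has a positive minimum $m$ depending only on $B$. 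Undoing the dilation gives $\|\zeta^{-1}\circ z\|_K \ge mR$. This is the missing idea; once you have it, your treatment of \eqref{sup-gamma} and \eqref{sup-gamma-derivatives}, and your reduction of \eqref{sup-gamma-derivatives-Y} via $\L\Gamma(\cdot,\zeta)=0$ (which commutes with $\partial_{\xi_k}$) to third-order $x$-derivative bounds obtainable as in \cite{DiFrancescoPolidoro}, is fine.
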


\begin{proof}
We first choose $R_0 >0$ such that $\| \zeta^{-1}\circ z \|_K \le R_0$ whenever $z \in \H_{\frac{1}{2}}(0)$, and $\zeta \in \H_{1}(0)$. The existence of such a positive number follows from the pseudo-triangular inequality \eqref{e-ps.tr.in}. With this choice of $R_0$, we apply \eqref{hom-1}, and we find 
\begin{equation}\label{sup-gamma-1}
\begin{split}
 \sup & \left\{ \Gamma(z,\zeta) : z \in \H_{\frac{R}{2}}(0), \zeta \in \H_{R}(0) \setminus \H_{\frac{3R}{4}}(0) \right\} \le \\
& c \left( \inf \left\{ \| \zeta^{-1}\circ z \|_K : z \in \H_{\frac{R}{2}}(0), \zeta \in \H_{R}(0) \setminus \H_{\frac{3R}{4}}(0) 
\right\}\right)^{-Q}.
\end{split}
\end{equation}
We therefore need to estimate the infimum of $\Vert \zeta^{-1}\circ z \Vert_K$ for $z \in \H_{\frac{R}{2}}(0) $ and $\zeta \in  \H_{R}(0) \setminus \H_{\frac{3R}{4}}(0) $. %Taking into account the homogeneity of the semi-norm $\Vert \cdot \Vert_K$, 
We first consider the points $\bar{z}:=\delta_{\frac{1}{R}}(z) $ and $\bar{\zeta}:=\delta_{\frac{1}{R}}(\zeta) $ which belong to $ \H_{\frac{1}{2}}(0) $ and $\H_{1}(0) \setminus \H_{\frac{3}{4}}(0) $, respectively. 
%and define the set $ E:=\overline{\H_{\frac{1}{2}}(0)} \times \overline{\H_{1}(0) \setminus \H_{\frac{3}{4}}(0)}$. 
We now define the function $ g(\bar{z},\bar{\zeta}):=\Vert \bar{\zeta}^{-1}\circ_R \bar{z} \Vert_K$, which is continuous on the compact set $E:=\overline{\H_{\frac{1}{2}}(0)} \times \overline{\H_{1}(0) \setminus \H_{\frac{3}{4}}(0)} \times [0,1] $, as observed in Remark \ref{oss_circ}. Thus, by Weierstrass's Theorem, $g$ attains a minimum $m$ on $E $, i.e.,
\begin{eqnarray*}
\Vert \bar{\zeta}^{-1}\circ_R \bar{z} \Vert_K \geq m , \quad \forall \bar{z} \in \overline{\H_{\frac{1}{2}}(0)}, \quad \forall \bar{\zeta} \in \overline{\H_{1}(0) \setminus \H_{\frac{3}{4}}(0)}, \quad \forall R \in [0,1].
\end{eqnarray*} 

Going back to the box of radius $R$, i.e. applying dilation $\delta_R$ to the points $\bar{z}$ and $\bar{\zeta}$ yields
\begin{eqnarray}\label{norm1}
\Vert \zeta^{-1}\circ z \Vert_K \geq m R, \quad z \in \overline{\H_{\frac{R}{2}}(0)}, \quad \zeta \in \overline{\H_{R}(0) \setminus \H_{\frac{3R}{4}}(0)},
\end{eqnarray} 
and therefore \eqref{sup-gamma-1} becomes
\begin{equation}
\begin{split}
 \sup  \left\{ \Gamma(z,\zeta) : z \in \H_{\frac{R}{2}}(0), \zeta \in \H_{R}(0) \setminus \H_{\frac{3R}{4}}(0) \right\} \le  \frac{c}{m^Q R^Q} = \colon \frac{C}{R^Q}
\end{split}
\end{equation}
%\begin{eqnarray*}
%\sup_{\H_{R}(0) \setminus \H_{\frac{3R}{4}}(0)} \Gamma(z,\cdot)\leq \frac{c}{{m}^{Q+\alpha_j}R^{Q+\alpha_j}} \leq \frac{C}{R^{Q+\alpha_j}},
%\end{eqnarray*}
where the constant $C$ does not depend on $R$.

To obtain \eqref{sup-gamma-derivatives} and \eqref{sup-gamma-derivatives-Y}, we use the bounds for the derivatives of $\Gamma$ in \eqref{hom-1} and apply the same arguments as above.
\end{proof}

\setcounter{equation}{0}\setcounter{theorem}{0}
\section{Preliminary results}

In this Section we list some preliminary facts, which are useful in proving our main results. First, we prove a priori estimates for the derivatives of $u$ solution to the Kolmogorov equation with right-hand side equal to $0$. To this end, we represent solutions to $\L u = 0$ as convolutions with the fundamental solution $\Gamma$  of $\L$ and its derivatives $\partial_{x_{1}}\Gamma,...,\partial_{x_{N}}\Gamma$.

We then prove a mean-value formula for $u$, which is based on the Euclidean mean-value theorem and on the homogeneity of the fundamental solution.

% Then, in the sequel, we prove the aforementioned Taylor formula for functions in $C^{2}_{\L}$.

\medskip

%\anna{In order to state the first result of this Section, we introduce some further notation. We denote by $Y_0$ the first order part of the operator $\L_0$, i.e.
%%\begin{eqnarray}
%%\L_0 =\sum_{i,j=1}^m\partial^2_{x_i x_j}+\langle B_0 x, D \rangle -\partial_t =: \div(A D_m) + Y_0
%%\end{eqnarray}
%\begin{eqnarray}
% Y_0:= \langle B_0 x, D \rangle -\partial_t .
%\end{eqnarray}
%Moreover, we write the operator $\L$ as $\L = \L_0 + \langle B_1 x, D \rangle$ where $B_1 = B-B_0$ is the matrix
% \begin{equation}
% 			   \label{B_1}
% 			   B_1 =   
% 			    \begin{pmatrix}
% 			       \ast &  \ast & \ldots & \ast & \ast  \\
% 			       \OO & \ast &  \ldots & \ast & \ast \\
%                   \OO & \OO  & \ldots & \ast & \ast \\
%                   \vdots & \vdots  & \ddots & \vdots & \vdots  \\
%                   \OO & \OO & \ldots & \OO & \ast 
%   			 \end{pmatrix}
%\end{equation} 
%We observe that the matrix $B_1 = (b_{ij}^1)_{i,j=1,\ldots,N}$ is a block matrix, i.e. $b^1_{ij}=0$ if $i>j$.}

\medskip

In order to state the first result of this Section, we recall the notation introduced in \eqref{e-dilations-cp}, that is $\delta_r =\textrm{diag}(r^{\alpha_1}, \ldots, r^{\alpha_N},r^2)$. In the sequel we assume that all the eigenvalues of the constant matrix $A$ belong to some interval $[\lambda, \Lambda] \subset \R^+$. We are now in position to state our result. 

\begin{proposition}\label{lem-apriori}
Let $u$ be a solution to $\L u = 0$ in $\H_{R}(z_0)$, with $R \in ]0,1]$. Then 
\begin{eqnarray*}
| \partial_{x_j}   u |(z) \leq \frac{C}{R^{\alpha_j}}\Vert u \Vert_{L^{\infty}(\H_{R}(z_0))},
\quad \text{for every} \quad z \in \H_{{\frac{R}{2}}}(z_0), \quad j=1,\ldots,N,
\end{eqnarray*}
for some positive constant $C$ only depending on $\lambda, \Lambda$ and on the matrix $B$.
\end{proposition}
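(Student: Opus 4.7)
The plan is to express $\partial_{x_j}u(z)$ through a weighted integral of $u$ against derivatives of the fundamental solution $\G$ on a dyadic annulus, where the a priori bounds \eqref{hom-1}--\eqref{hom-2} (realised uniformly as in the proof of Lemma~\ref{prop-ring}) apply. By the left-invariance of $\L$ on $\mathbb{K}$ we may assume $z_0 = (0,0)$. Fix once and for all a cutoff $\chi \in C^\infty_0(\H_1(0))$ with $\chi \equiv 1$ on $\overline{\H_{3/4}(0)}$, and set $\phi(\z) := \chi(\d_{1/R}(\z))$, so that $\phi \equiv 1$ on $\H_{3R/4}(0)$ and $\mathrm{supp}\,\phi \subset \H_R(0)$. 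The dilation-adapted chain rule yields $|\partial_{\x_k}\phi| \le C R^{-1}$, $|\partial_{\x_i}\partial_{\x_k}\phi| \le C R^{-2}$ for $i,k = 1,\dots,m$, and $|\partial_{\t}\phi| \le C R^{-2}$. Combining these with $|x_j| \le R^{\alpha_j}$ on $\H_R(0)$ and the block structure of $B$ in \eqref{B} --- whose nonzero entries $b_{ij}$ satisfy either $\alpha_j \ge \alpha_i$ (in the ``$\ast$'' blocks) or $\alpha_j = \alpha_i - 2$ (in the sub-diagonal blocks $B_l$) --- gives $|\L\phi| \le C R^{-2}$ on $\H_R(0)$.

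Since $u \in C^\infty(\H_R(0))$ by hypoellipticity, $\phi u$ extended by zero lies in $C^\infty_0(\R^{N+1})$, and the representation formula yields, for $z\in\H_{R/2}(0)$,
\[
u(z) \;=\; \phi(z) u(z) \;=\; -\int_{\R^{N+1}} \G(z,\z)\,\L(\phi u)(\z)\,d\z.
\]
Using $\L u = 0$ on $\mathrm{supp}\,\phi$ and the symmetry of $A$, one rewrites
\[
\L(\phi u) \;=\; u\,\L\phi \;+\; 2 \sum_{i,k=1}^m a_{ik}\,\partial_{\x_i}\phi\,\partial_{\x_k}u,
\]
whose two terms are supported in the annulus $A_R := \H_R(0) \setminus \H_{3R/4}(0)$. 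I would then differentiate the representation in $x_j$ --- permissible because $\G(z,\cdot)$ is smooth on $A_R$ for $z\in\H_{R/2}(0)$ --- and integrate by parts in $\x_i$ in the second integral (boundary terms vanishing by compact support of $\phi$) to arrive at
\begin{align*}
\partial_{x_j}u(z) \;=\; & -\int_{A_R} \partial_{x_j}\G(z,\z)\,u(\z)\,\L\phi(\z)\,d\z \\
& +\; 2 \sum_{i,k=1}^m \int_{A_R} \partial_{\x_i}\!\bigl[\partial_{x_j}\G(z,\z)\,a_{ik}\,\partial_{\x_k}\phi(\z)\bigr]\,u(\z)\,d\z,
\end{align*}
an identity in which $u$ appears undifferentiated.

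The conclusion then follows from the bounds $|\partial_{x_j}\G(z,\z)| \le C R^{-Q-\alpha_j}$ and $|\partial_{\x_i}\partial_{x_j}\G(z,\z)| \le C R^{-Q-\alpha_j-1}$ --- valid uniformly on $A_R$ for $z\in\H_{R/2}(0)$, by the same compactness-and-continuity argument used to establish Lemma~\ref{prop-ring} --- combined with the cutoff estimates above and the volume bound $|A_R| \le C R^{Q+2}$: each of the two integrals is then majorised by $C R^{-\alpha_j}\|u\|_{L^\infty(\H_R(0))}$, which is the claimed inequality. The one step I expect to require genuine care is the pointwise estimate $|\L\phi| \le C R^{-2}$: the horizontal second derivatives of $\phi$ are immediate, but the drift $\langle Bx, D_x\phi\rangle$ mixes variables of different weights, and one needs the precise block-triangular structure of $B$ to control each contribution $b_{ij}\,x_j\,\partial_{x_i}\phi$ by $R^{\alpha_j-\alpha_i} \le R^{-2}$ on $\H_R(0)$ (using $R \le 1$ in the last step); the remaining bookkeeping is routine once Lemma~\ref{prop-ring} and its proof are in hand.
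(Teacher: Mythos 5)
Your proof is correct and follows essentially the same route as the paper: reduce to $z_0=0$ by left-invariance, cut off $u$ by a dilation-adapted cutoff, represent $u$ via the fundamental solution, move the derivative $\partial_{\xi_k}$ off $u$ by integration by parts onto $\Gamma$ and the cutoff (so that all integrands are supported in the annulus), and bound each term using the uniform estimates for $\Gamma$ from Lemma~\ref{prop-ring} together with the key observation that the block-triangular structure of $B$ forces $\alpha_j-\alpha_i\ge -2$ so that $|\langle Bx,D\phi\rangle|\le CR^{-2}$. The only differences are cosmetic: you take $\phi=\chi\circ\delta_{1/R}$ instead of $\eta_R=\chi(\|\cdot\|_K)$ and you keep the zeroth-order coefficient bundled as $\L\phi$ rather than splitting it into $\div(AD_m\phi)$ and $Y\phi$ as the paper does; also, for the drift estimate you implicitly use $|\partial_{\xi_i}\phi|\le CR^{-\alpha_i}$ for \emph{all} $i=1,\dots,N$ (not just $i\le m$, which is the only range you list explicitly), but this clearly follows from the same chain rule.
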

\begin{proof}
Without loss of generality, we can assume $z_0=0$. Let $\eta_{R} \in C^{\infty}_{0}(\mathbb{R}^{N+1})$ be a cut-off function such that 
\begin{eqnarray}\label{cut-off}
\eta_{R}(x,t)=\chi(\Vert(x,t)\Vert_{K}),
\end{eqnarray}
%$\eta_{R}(x,t)=\chi(\Vert(x,t)\Vert_{K})$ 
where $\chi \in C^{\infty}([0,+\infty),[0,1])$ is such that $\chi(s)=1$ if $s \leq \frac{3R}{4}$, $\chi(s)=0$ if $s \geq R$ and $|\chi'| \leq \frac{c}{R}$, $|\chi''|\leq \frac{c}{R^2}$. Then, for every $z \in \H_{R}(0)$ and for $i=1,\ldots,N$, there exists a constant $c$, only depending on $B$, such that
\begin{eqnarray}\label{c_eta}
|\partial _{x_i} \eta_{R}(z)| \leq \frac{c}{R^{\alpha_i}}, \qquad |\partial_t \, \eta_R(z)| \leq \frac{c}{R^{2}}.
\end{eqnarray} 
Consequently, for every $z \in \H_R(0)$ and $i,j=1,\ldots,m$, we have $|\partial_{x_i x_j}^2 \eta_R (z)| \leq \frac{c}{R^2}$ and therefore we obtain a bound for the second order part of $\vert \L \eta_R(z) \vert$. 
%\anna{Scegliere cut-off di modo che $ Y(\eta_R) \leq \frac{c}{R^2}$}.
%Moreover, since $\L_0(\eta_{R}u) = \eta_R \L_0 u + u \L_0 \eta_R + 2 \langle D_m u, D_m \eta_R \rangle,$ 

Since $\eta_R \equiv 1$ in $\in \H_{\frac{3R}{4}}(0)$, for every $z \in \H_{\frac{R}{2}}(0)$ we represent a solution $u$ to $\L u=0$ as follows
\begin{align}\label{convolution}
u(z) = (\eta_{R}u)(z) = - \int_{\H_{R}(0)}[\Gamma(z,\cdot) \L(\eta_{R}u)](\z)d\z.
%=- \int_{\H_{R}(0)}[\Gamma_0(z,\cdot) \div(A D_m)(\eta_{R}u)](\z)d\z - \int_{\H_{R}(0)}[\Gamma_0(z,\cdot) Y(\eta_R u)(\eta_{R}u)](\z)d\z  .
\end{align}
Since $\L=\div(AD_m)+Y$ and $\L u=0$ by assumption, \eqref{convolution} can be rewritten as
\begin{equation}\label{convolution2}
\begin{split}
u(z) = (\eta_{R}u)(z) &= - \int_{\H_{R}(0)}[\Gamma(z,\cdot) \div(A D_ m(\eta_{R})) u](\z)d \z\\
&\quad- \int_{\H_{R}(0)}[\Gamma(z,\cdot) Y(\eta_{R})u](\z)d\z \\
&\quad- 2 \int_{\H_{R}(0)}[\Gamma(z,\cdot)\langle D_m u, A D_m \eta_{R} \rangle ](\z)d\z.
\end{split}
\end{equation}
Integrating by parts the last integral in \eqref{convolution2}, we obtain, for every $z \in \H_{\frac{R}{2}}(0)$
\es{parts1}{
u(z) = (\eta_{R}u)(z) &= \int_{\H_{R}(0)}[\Gamma(z,\cdot) \div(A D_ m(\eta_{R})) u](\z)d \z\\ 
&\quad- \int_{\H_{R}(0)}[\Gamma(z,\cdot) Y(\eta_{R})u](\z)d\z \\
&\quad+ 2 \int_{\H_{R}(0)}[\langle D_m^\zeta \Gamma(z,\cdot), A D_m \eta_{R} \rangle u](\z)d\z,
}
where $D_m$ is the gradient with respect to $x_1,\ldots,x_m$ and the superscript in $D_m^\zeta $ indicates that we are differentiating w.r.t the variable $\zeta$. 
%This is due to the non-Euclidean structure of the Lie Group $\mathbb{K}$, which implies that $\partial_{x_j} \Gamma$ differs from $-\partial_{\xi_j} \Gamma$. We observe that this fact does not represent an obstacle to our approach, since both the derivatives share the same homogeneity.}

Since $z \in \H_{\frac{R}{2}}(0)$ and $\partial_{x_i}\eta_R,Y(\eta_R) = 0$ ($i=1,\ldots,m$) in $\H_{\frac{3R}{4}}(0)$, after differentiating under the integral sign \eqref{parts1}, we find
\en{
\partial_{x_{j}}u(z) = \partial_{x_{j}}(\eta_{R}u)(z) 
&= \int_{\H_{R}(0) \setminus \H_{\frac{3R}{4}}(0)}[\partial_{x_{j}} \Gamma(z,\cdot) \div(A D_ m(\eta_{R})) u](\z)d\z \\  
&\quad - \int_{\H_{R}(0) \setminus \H_{\frac{3R}{4}}(0)}[\partial_{x_{j}} \Gamma(z,\cdot) Y(\eta_{R}) u](\z)d\z \\ 
&\quad +2 \int_{\H_{R}(0) \setminus \H_{\frac{3R}{4}}(0)}[\langle \partial_{x_{j}} D_m^\zeta \Gamma(z,\cdot), A D_m \eta_{R}\rangle u] (\z)d\z, \\
}
for every $j=1,...,N$. Thus, we obtain

\en{
|\partial_{x_{j}}u(z)| = |\partial_{x_{j}}(\eta_{R}u)(z)| 
&\leq \int_{\H_{R}(0) \setminus \H_{\frac{3R}{4}}(0)}\big\vert[\partial_{x_{j}} \Gamma(z,\cdot)  \div(A D_ m(\eta_{R})) u](\z)\big\vert d\z \\ 
&\quad + \int_{\H_{R}(0) \setminus \H_{\frac{3R}{4}}(0)}\big\vert[\partial_{x_{j}} \Gamma(z,\cdot) Y(\eta_{R}) u](\z)\big\vert d\z \\   
&\quad +2 \int_{\H_{R}(0) \setminus \H_{\frac{3R}{4}}(0)}\big\vert[\langle \partial_{x_{j}} D_m^\zeta \Gamma(z,\cdot), A D_m \eta_{R}\rangle u] (\z)\big\vert d\z \\
& =: \tilde{I}_{1}(z) + \tilde{I}_{2}(z)+\tilde{I}_{3}(z),
}
We estimate $\tilde{I}_{1}(z)$ and $ \tilde{I}_{2}(z)$, for $z \in \H_{\frac{R}{2}}(0)$. We have
\begin{align*}
\tilde{I}_{1}(z) &\leq \Vert u \Vert_{L^{\infty}(\H_{R}(0))}\sup_{\H_{R}(0) \setminus \H_{\frac{3R}{4}}(0)}\vert \div(A D_ m(\eta_{R}))|\mis(\H_{R}(0)) \! \! \! \! \sup_{\substack{z \in \H_{\frac{R}{2}}(0),\\  \zeta \in \H_{R}(0) \setminus \H_{\frac{3R}{4}}(0)}}
\! \! \! \! \big\vert \partial_{x_{j}} \Gamma(z,\zeta)\big\vert ,\\
\tilde{I}_{2}(z) &\leq \Vert u \Vert_{L^{\infty}(\H_{R}(0))}\sup_{\H_{R}(0) \setminus \H_{\frac{3R}{4}}(0)}\vert Y(\eta_R)|\mis(\H_{R}(0)) \! \! \! \sup_{\substack{z \in \H_{\frac{R}{2}}(0),\\  \zeta \in \H_{R}(0) \setminus \H_{\frac{3R}{4}}(0)}} \! \! \! \big\vert \partial_{x_{j}} \Gamma(z,\zeta)\big\vert.
\end{align*}
We now apply Lemma \ref{prop-ring} and obtain
\begin{equation}\label{gamma_est}
\sup_{\substack{z \in \H_{\frac{R}{2}}(0),\\  \zeta \in \H_{R}(0) \setminus \H_{\frac{3R}{4}}(0)}}\big\vert \partial_{x_{j}} \Gamma(z,\zeta)\big\vert \leq \frac{\tilde{C}}{R^{Q+\alpha_j}}.
\end{equation}
Moreover, by our choice of the cut-off function $\eta_R$, we have 
\begin{equation}\label{div_cut}
 |\div(A D_m(\eta_R))| \leq \frac{\Lambda \, c}{R^2} \quad \text{in} \quad \H_R(0),
\end{equation}
where $\Lambda$ is the largest eigenvalue of $A$. Finally, combining inequalities \eqref{gamma_est} and \eqref{div_cut} with $\mis(\H_{R}(0)) = R^{Q+2}\mis(\H_{1}(0))$, we obtain
\begin{eqnarray}\label{ine-above-2}
\tilde{I}_{1}(z) \leq \frac{C}{R^{\alpha_j}}\Vert u \Vert_{L^{\infty}(\H_{R}(0))}, \quad z \in \H_{\frac{R}{2}}(0),
\end{eqnarray}
We now estimate $|Y(\eta_{R})|\leq |\langle B x, D\eta_{R}\rangle|+|\partial_t \eta_R|$ in $\H_{R}(0) \setminus \H_{\frac{3R}{4}}(0)$. The bound for the derivative with respect to time of $\eta_R$ is obtained using \eqref{c_eta}. 
%\begin{equation}\label{partial_t_cut}
%|\partial_t \eta_R| \leq \frac{ c}{R^2} \quad \text{in} \quad \H_R(0).
%\end{equation}
Moreover
\begin{eqnarray}\label{sum2}
|\langle B x, D\eta_{R}(\zeta)\rangle| \leq \sum_{i,k=1}^N |b_{ik}||x_k||\partial_{x_i}\eta_R(\zeta)| 
\leq c \sum_{i,k=1}^N |b_{ik}| R^{\alpha_k-\alpha_i}, 
\end{eqnarray}
where $\zeta \in \H_{R}(0) \setminus \H_{\frac{3R}{4}}(0)$.
Notice that in sum \eqref{sum2} the exponent $\alpha_k-\alpha_i$ is always greater or equal to $-2$, because of the form of the matrix $B$. Since by assumption $R\le1$, we estimate \eqref{sum2} as follows
\begin{eqnarray}\label{add_term}
|\langle B x, D\eta_{R}\rangle| \leq \frac{C'}{R^2}, \quad \textrm{in $\H_{R}(0) \setminus \H_{\frac{3R}{4}}(0) $},
\end{eqnarray}
where $C'$ is a constant that only depends on the matrix $B$ and on the constant $c$ in \eqref{c_eta}.
%Therefore, the exponent $\alpha_k-\alpha_i$ is always nonnegative.

Finally, using again $\mis(\H_{R}(0)) = R^{Q+2}\mis(\H_{1}(0))$, together with \eqref{gamma_est} and \eqref{add_term}, we obtain 
\begin{eqnarray}\label{ine-above}
\tilde{I}_{2}(z) \leq \frac{C}{R^{\alpha_j}}\Vert u \Vert_{L^{\infty}(\H_{R}(0))}, \quad z \in \H_{\frac{R}{2}}(0),
\end{eqnarray}
where $C$ depends only on the constants $c$ and $\tilde{C}$ in \eqref{c_eta} and \eqref{gamma_est} and on the matrix $B$.

By the same argument we prove that, for a point $z \in \H_{\frac{R}{2}}(0) $, we have
\begin{eqnarray*}
\tilde{I}_{3}(z) \leq \Vert u \Vert_{L^{\infty}(\H_{R}(0))}\frac{c}{R}\ \mis(\H_{R}(0)) \! \! \! \! \! \! \! \! 
\sup_{\substack{z \in \H_{\frac{R}{2}}(0),\\  \zeta \in \H_{R}(0) \setminus \H_{\frac{3R}{4}}(0)}} \! \! \! \! \! \!
\big\vert \partial_{x_{j}} D_m^\zeta \Gamma(z,\zeta)\big\vert \leq \frac{C}{R^{\alpha_j}}\Vert u \Vert_{L^{\infty}(\H_{R}(0))}.
\end{eqnarray*}
where $C$ denotes once again a constant depending only on $c$, $\tilde{C}$ and $B$. Combining the inequality above with \eqref{ine-above-2} and \eqref{ine-above}, we finally obtain 
\begin{equation*}
\Vert \partial_{x_{j}}u \Vert_{L^{\infty}(\H_{{\frac{R}{2}}}(0))} \leq \frac{C}{R^{\alpha_j}}\Vert u \Vert_{L^{\infty}(\H_{R}(0))}, \quad j=1,...,N.
\end{equation*}
\end{proof}

We state a result analogous to Proposition \ref{lem-apriori}, written in terms of the vector fields $X_1,\ldots,X_m,Y$ introduced in \eqref{e-XiY}.
\begin{proposition}\label{corollary}
Let $u$ be a solution to $\L u = 0$ in $\H_{R}(0)$, for $R \in ]0,1[$, then for any $X_i,X_j \in \lbrace X_1,...,X_m \rbrace$, there exists a constant $C$, only depending on $\lambda, \Lambda$ and on the matrix $B$, such that
\begin{eqnarray*}
| X_i  u |(z) \leq \frac{C}{R}\Vert u \Vert_{L^{\infty}(\H_{R}(0))},\quad z \in \H_{{\frac{R}{2}}}(0),\\
| X_i X_ j u |(z) \leq \frac{C}{R^2}\Vert u \Vert_{L^{\infty}(\H_{R}(0))},\quad z \in \H_{{\frac{R}{2}}}(0).
\end{eqnarray*}
Similarly, we have that
\begin{eqnarray*}\label{Y-estimate}
| Y u |(z) \leq \frac{C}{R^2}\Vert u \Vert_{L^{\infty}(\H_{R}(0))},\quad z \in \H_{{\frac{R}{2}}}(0).
\end{eqnarray*}
\end{proposition}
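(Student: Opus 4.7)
The plan is to deduce the three estimates from Proposition \ref{lem-apriori} and from a second differentiation of the representation formula used in its proof. For the first estimate on $|X_i u|$, since $X_i = \sum_{j=1}^{m} \bar a_{ij}\partial_{x_j}$ and $\alpha_j = 1$ for $j=1,\dots,m$, a direct linear combination of the bounds from Proposition \ref{lem-apriori} together with $|\bar a_{ij}| \le \sqrt{\Lambda}$ yields $|X_i u|(z) \le \tfrac{C}{R}\|u\|_{L^{\infty}(\H_R(0))}$ on $\H_{R/2}(0)$, with $C$ depending on $\lambda, \Lambda$ and $B$.

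For the second-order estimates, I would revisit the identity \eqref{parts1} obtained in the proof of Proposition \ref{lem-apriori}: for $z\in\H_{R/2}(0)$,
\begin{equation*}
u(z) = \int_{\H_R(0)\setminus \H_{3R/4}(0)} \bigl[\Gamma(z,\cdot)\div(AD_m\eta_R) - \Gamma(z,\cdot)Y(\eta_R) + 2\langle D_m^{\zeta}\Gamma(z,\cdot), AD_m\eta_R\rangle\bigr]\,u(\zeta)\,d\zeta.
\end{equation*}
Since the integrand is smooth in $z$ (the singularity of $\Gamma$ is avoided because $\zeta$ stays in the annular region and $z\in\H_{R/2}(0)$), one is allowed to apply either $X_iX_j$ or $Y$ under the integral sign, moving the derivatives onto $\Gamma(z,\cdot)$. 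Invoking the bounds \eqref{hom-1} to obtain $|X_iX_j\Gamma(z,\zeta)|\le C\|\zeta^{-1}\!\circ z\|_K^{-(Q+2)}$ and $|Y\Gamma(z,\zeta)|\le C\|\zeta^{-1}\!\circ z\|_K^{-(Q+2)}$, and the bounds \eqref{hom-2} to obtain $|X_iX_j\partial_{\xi_k}\Gamma(z,\zeta)|\le C\|\zeta^{-1}\!\circ z\|_K^{-(Q+3)}$ and $|Y\partial_{\xi_k}\Gamma(z,\zeta)|\le C\|\zeta^{-1}\!\circ z\|_K^{-(Q+3)}$ for $k=1,\dots,m$, the exact same scaling argument used in the proof of Lemma \ref{prop-ring} shows that these quantities are uniformly bounded on the annulus by $C R^{-(Q+2)}$ and $C R^{-(Q+3)}$ respectively.

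Combining these with the cut-off estimates $|\div(AD_m\eta_R)|\le C R^{-2}$, $|Y(\eta_R)|\le C R^{-2}$ and $|D_m\eta_R|\le C R^{-1}$ (already derived in the proof of Proposition \ref{lem-apriori}) and with the volume bound $\mis(\H_R(0)\setminus\H_{3R/4}(0))\le c R^{Q+2}$, one obtains
\begin{equation*}
|X_iX_j u(z)| \le C\|u\|_{L^{\infty}(\H_R(0))}\, R^{Q+2}\Bigl(\tfrac{1}{R^{Q+4}} + \tfrac{1}{R^{Q+4}} + \tfrac{1}{R^{Q+4}}\Bigr) = \tfrac{C}{R^2}\|u\|_{L^{\infty}(\H_R(0))},
\end{equation*}
and an identical computation, with $X_iX_j$ replaced by $Y$ (acting on the $z$-variable as a Lie derivative along the same characteristic used in \eqref{eq-Yu}), gives $|Yu(z)|\le \tfrac{C}{R^2}\|u\|_{L^{\infty}(\H_R(0))}$.

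The main technical point to check is the bound on the mixed derivative $X_iX_j\partial_{\xi_k}\Gamma$ (and $Y\partial_{\xi_k}\Gamma$), which formally is a derivative of total intrinsic order three. This does not appear verbatim in \eqref{hom-1}--\eqref{hom-2}, but it follows either by iterating the argument of \cite{DiFrancescoPolidoro} that produced \eqref{hom-2} — noting that the estimates there hold for every order of differentiation with the expected homogeneous scaling — or by a direct application of the invariance $\L_0(u\circ\delta_r) = r^2\delta_r(\L_0 u)$ to the fundamental solution $\Gamma_0$, followed by the perturbative argument of \cite{DiFrancescoPolidoro} that transfers bounds from $\Gamma_0$ to $\Gamma$ on bounded sets. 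Once these ingredients are in place, the deduction of the three estimates is routine.
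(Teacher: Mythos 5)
Your proof follows essentially the same route as the paper's: the first-order bound is inherited from Proposition \ref{lem-apriori} since each $X_i$ is a bounded linear combination of $\partial_{x_1},\dots,\partial_{x_m}$ (all of weight $\alpha_j=1$, with coefficients controlled by $\sqrt{\Lambda}$), and the second-order bounds follow by applying $X_iX_j$ or $Y$ under the integral sign in the representation \eqref{parts1} and invoking the annulus estimates of Lemma \ref{prop-ring} together with the cut-off and volume bounds from Proposition \ref{lem-apriori}. Your explicit note that the needed bound on the intrinsic-order-three derivative $X_iX_j\partial_{\xi_k}\Gamma$ is not stated verbatim in \eqref{hom-1}--\eqref{hom-2} but follows from the same scaling/perturbation argument in \cite{DiFrancescoPolidoro} is correct, and is in fact more careful than the paper's terse reference to \eqref{sup-gamma-derivatives} and \eqref{sup-gamma-derivatives-Y}.
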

\pr{The estimate of $X_1, \dots, X_m$ has been proved in Proposition \ref{lem-apriori}. The proof of the remaining estimates is obtained by reasoning as in Proposition \ref{lem-apriori}, and using estimates \eqref{sup-gamma-derivatives} and \eqref{sup-gamma-derivatives-Y}, respectively. We omit the details here.}
%$X_iX_j \Gamma, i,j = 1, \dots m$ and $Y \Gamma$ are homogeneous functions of degree $-Q-2$. We omit the details here.}

In the sequel, we will need to estimate the second order derivatives of a solution to $\L u =g$, where $g$ is a polynomial of degree at most two. To this end, we let
\begin{equation}\label{def-g}
g_1(z) =\langle v, x \rangle,\qquad
g_2(z) =\langle M x, x \rangle,
\end{equation}
be two polynomial functions, where $v$ and $M$ denote a constant vector of $\R^N$ and a $N \times N$ constant matrix, respectively.

\begin{lemma}\label{lem-tec}
Let $\eta_R$ be the cut-off function introduced in \eqref{cut-off} and let $g_1$ and $g_2$ be the functions defined in \eqref{def-g}. Then there exists a positive constant $C$, only depending on $\lambda, \Lambda$ and on the matrix $B$, such that
\begin{eqnarray}\label{est-sec-gamma-const}
\bigg\vert  \partial^2_{x_i x_j}\int_{\H_R(0)}\Gamma(z,\z)\eta_R(\z)  d\z \bigg\vert \leq C,
\end{eqnarray}
\begin{equation}\label{est-sec-gamma}
\bigg\vert \partial^2_{x_i x_j}\int_{\H_R(0)}\Gamma(z,\z)\eta_R(\z) g_1(\z)  d\z \bigg\vert \leq C R,
\end{equation}
\begin{equation}\label{est-sec-gamma-2}
\bigg\vert \partial^2_{x_i x_j}\int_{\H_R(0)}\Gamma(z,\z)\eta_R(\z) g_2(\z)  d\z \bigg\vert \leq C R^2,
\end{equation}
for every $z \in \H_{\frac{R}{2}}(0)$, $R \in ]0,1]$ and for any $i,j=1,\ldots,m$.
%Moreover, for any $i,j=1,\ldots,m$ and for $z \in \H_{\frac{R}{2}}(0)$, the following estimate holds true
%\begin{eqnarray}
%\bigg\vert c \partial^2_{x_i x_j}\int_{\H_R(0)}\Gamma(z,\z)\eta_R(\z)  d\z \bigg\vert \leq C.
%\end{eqnarray}
\end{lemma}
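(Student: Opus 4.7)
The plan is to write the integral
$v_k(z) := \int_{\H_R(0)} \Gamma(z,\zeta)\,\eta_R(\zeta)\,g_k(\zeta)\,d\zeta$
(with the convention $g_0 := 1$) as a sum $v_k = w_k + P_k$, where $P_k$ will be a smooth, globally defined particular solution of $\L P_k = -g_k$ on $\R^{N+1}$ with controlled size, and $w_k := v_k - P_k$ will satisfy the homogeneous equation $\L w_k = 0$ on a sub-box of $\H_R(0)$. The desired bound will then follow by combining the interior a priori estimates of Proposition \ref{corollary} applied to $w_k$ with direct pointwise control of the second derivatives of $P_k$. Indeed, the representation formula for $\L$ gives $\L v_k = -\eta_R g_k$ globally on $\R^{N+1}$, so $\L w_k = (1-\eta_R)g_k \equiv 0$ on $\H_{3R/4}(0)$ (where $\eta_R \equiv 1$), and $w_k$ is smooth by hypoellipticity of $\L$.

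The first step will be the construction of $P_k$ via explicit matrix integrals. I would take $P_0(x,t) := t$, so that $\L P_0 = -1$ and $|P_0|\le R^2$ on $\H_R(0)$. For $g_1(z) = \langle v, x\rangle$, I would set $P_1(x,t) := \langle A(t)v,\, x\rangle$ with $A(t) := \int_0^t e^{sB^T}\,ds$; the identity $B^T A(t) = e^{tB^T} - I$ (from $\frac{d}{ds}e^{sB^T} = B^T e^{sB^T}$) gives $\L P_1 = -g_1$ in a one-line computation, while $P_1$ is linear in $x$ (so $\partial^2_{x_i x_j} P_1 \equiv 0$) and the bound $|A(t)|\le C|t|\le CR^2$ yields $|P_1|\le CR^3$ on $\H_R(0)$. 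For $g_2(z) = \langle Mx, x\rangle$ (symmetrize $M$ if needed), I would set $P_2(x,t) := \langle Q(t) x, x\rangle + r(t)$, where $Q(t) := \int_0^t e^{sB^T} M\, e^{sB}\,ds$ solves the Lyapunov ODE $Q'(t) = B^T Q(t) + Q(t) B + M$ with $Q(0) = 0$, and $r(t) := 2\int_0^t \mathrm{tr}(A\, Q(s)|_m)\,ds$ (with $Q(s)|_m$ the upper-left $m\times m$ block of $Q(s)$) cancels the $x$-independent term coming from $\sum_{i,j=1}^m a_{ij}\partial^2_{x_i x_j}P_2$; then $\L P_2 = -g_2$, and $|Q(t)|\le C|t|\le CR^2$ gives $|P_2|\le CR^4$ together with $|\partial^2_{x_i x_j} P_2| = 2|Q_{ij}(t)|\le CR^2$ on $\H_R(0)$.

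Next, to control $w_k$, I would use the kernel bound $\Gamma(z,\zeta)\le c/\|\zeta^{-1}\circ z\|_K^Q$ from \eqref{hom-1} together with $|g_k(\zeta)|\le CR^k$ on $\H_R(0)$ (since $|\xi_i|\le R^{\alpha_i}\le R$ for $\alpha_i\ge 1$ and $R\le 1$) to deduce
\en{
|v_k(z)| \le CR^k \int_{\H_R(0)} \frac{d\zeta}{\|\zeta^{-1}\circ z\|_K^Q} \le CR^{k+2},
}
where the last inequality uses the anisotropic polar-type identity $\int_{\|u\|_K<r}\|u\|_K^{-Q}\,du \simeq r^2$ (a consequence of $|\H_r(0)| \simeq r^{Q+2}$). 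Hence $\|w_k\|_{L^\infty(\H_{3R/4}(0))}\le CR^{k+2}$. For $z\in \H_{R/2}(0)$, the pseudo-triangular inequality \eqref{e-ps.tr.in} produces a radius $\rho \simeq R$ with $\H_\rho(z)\subset \H_{3R/4}(0)$; combining the left-invariance of $\L$ with Proposition \ref{corollary} (and the passage from $X_i X_j$ to $\partial^2_{x_i x_j}$ via $\partial_{x_i} = \sum_k (A^{-1/2})_{ki} X_k$) then yields
\en{
|\partial^2_{x_i x_j} w_k(z)|\le C\rho^{-2}\|w_k\|_{L^\infty(\H_\rho(z))}\le CR^k.
}
Adding the pointwise bounds on $\partial^2_{x_i x_j} P_k$ established above (which are $\equiv 0$ for $k=0,1$ and $\le CR^2$ for $k=2$) produces \eqref{est-sec-gamma-const}-\eqref{est-sec-gamma-2}.

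The main obstacle will be the construction of $P_k$ in the first step for $k=1,2$: since $B$ is not assumed nilpotent (unlike the homogeneous model $B_0$ in \eqref{B_0}), the naive polynomial ansatz obtained by iterating the identity $\L(t\,q(x)) = q(x) + (\textrm{remainder})$ never terminates and produces only a formal power series in $t$. Switching to the closed-form matrix integrals $A(t)$, $Q(t)$, $r(t)$ circumvents this issue and furnishes smooth global particular solutions whose $L^\infty$-norm on $\H_R(0)$ is of order $R^{k+2}$ and whose spatial second derivatives are of order $R^k$, which is precisely what is needed to balance the factor $R^{-2}$ coming from the interior estimate for $w_k$.
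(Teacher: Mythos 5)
Your proposal takes a genuinely different route from the paper. The paper attacks the singular integral directly: it writes $\partial^2_{x_i x_j}\int\Gamma\,\eta_R\,g_k$ as a principal value plus a surface-integral jump term, splits the integrand by subtracting $\eta_R(z)$ and $g_k(z)$ so that the remaining kernels are either supported on the ring $\H_R\setminus\H_{3R/4}$ (and then bounded via Lemma~\ref{prop-ring}) or locally integrable of degree $-(Q+1)$, and then exhibits cancellation between the jump term and a residual surface integral. You instead observe that $\L v_k = -\eta_R g_k$ by the fundamental-solution property, construct a smooth global particular solution $P_k$ with $\L P_k=-g_k$ via closed-form matrix integrals (handling the non-nilpotent $B$ correctly, which is the genuine obstacle you identify), and then apply the interior a priori bound of Proposition~\ref{corollary} to the $\L$-harmonic remainder $w_k=v_k-P_k$. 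This is structurally cleaner: it completely avoids the jump-relation computation and the surface-integral cancellations. Your verifications of $\L P_1=-g_1$ via $B^T A(t)=e^{tB^T}-I$ and of $\L P_2=-g_2$ via the Lyapunov identity are correct, the size bounds $|P_k|\lesssim R^{k+2}$, $|\partial^2_{x_i x_j}P_k|\lesssim R^k$ on $\H_R(0)$ are right, and the kernel estimate $\|v_k\|_{L^\infty}\lesssim R^{k+2}$ via $\int_{\|u\|_K<r}\|u\|_K^{-Q}\,du\simeq r^2$ is sound.

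There is, however, one gap that needs repair. You claim that for $z\in\H_{R/2}(0)$ the pseudo-triangular inequality \eqref{e-ps.tr.in} produces $\rho\simeq R$ with $\H_\rho(z)\subset\H_{3R/4}(0)$. Unwinding the quasi-triangle inequality $d_K(\zeta,0)\le {\bf c}_H\big(d_K(\zeta,z)+d_K(z,0)\big)$ with $d_K(z,0)<R/2$ gives the requirement ${\bf c}_H(\rho+R/2)\le 3R/4$, i.e.\ $\rho\le R\big(\tfrac{3}{4{\bf c}_H}-\tfrac12\big)$, which is only positive when ${\bf c}_H<3/2$. The paper makes no such smallness assumption on ${\bf c}_H$, so the inclusion can fail for $z$ near the edge of $\H_{R/2}(0)$. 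Applying Proposition~\ref{corollary} directly to $w_k$ on $\H_{3R/4}(0)$ only yields the second-derivative bound on $\H_{3R/8}(0)\subsetneq\H_{R/2}(0)$. The fix is routine but must be spelled out: either re-derive Proposition~\ref{lem-apriori}/\ref{corollary} with a cut-off equal to $1$ on $\H_{(1-\varepsilon)R}(0)$ and vanishing outside $\H_R(0)$ so that the interior estimate holds on $\H_{\theta R}(0)$ for any fixed $\theta<1$ (the compactness argument of Lemma~\ref{prop-ring} works verbatim for any two disjoint concentric compact annuli), or, equivalently, run your whole argument with a cut-off $\tilde\eta$ that equals $1$ on a box strictly larger than $\H_{R/2}(0)$ but still inside $\H_{3R/4}(0)$. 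Once this radius adjustment is made, the rest of the argument closes and gives \eqref{est-sec-gamma-const}--\eqref{est-sec-gamma-2}.
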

\pr{
Reasoning as in the proof of Proposition 2.11 in \cite{DiFrancescoPolidoro}, we write the right-hand side of \eqref{est-sec-gamma-const} as 
\begin{equation}\label{2.111}
\begin{split}
\partial^2_{x_i x_{j}}\int_{\H_R(0)} \Gamma(\z^{-1}\circ z) \eta_R(\z) d\z &=\lim_{\varepsilon \to 0} \int_{\H_R(0) \cap \lbrace \Vert \z^{-1}\circ z \Vert_K \ge \varepsilon \rbrace}\partial^2_{x_i x_{j}}\Gamma(\z^{-1}\circ z) \eta_R(\z) d\z \\
&\quad +  \eta_R(z) \int_{\Vert \z \Vert_K=1} \partial_{x_i} \Gamma_0(\z)\nu_j d \sigma(\z)\\
&=:\lim_{\varepsilon \to 0}I_1^0(\varepsilon,z)+I_2^0(z).
\end{split}
\end{equation}
We rewrite $I_1^0(\varepsilon,z)$ as
\begin{equation}\label{I10}
\begin{split}
I^0_1(\varepsilon,z)&=\int_{\H_R(0) \cap \lbrace \Vert \z^{-1}\circ z \Vert_K \ge \varepsilon \rbrace}\partial^2_{x_i x_{j}}\Gamma(\z^{-1}\circ z) \left(\eta_R(\z)-\eta_R(z)\right) d\z \\
&\quad +\eta_R(z) \int_{\H_R(0) \cap \lbrace\Vert \z^{-1}\circ z \Vert_K \ge \varepsilon \rbrace}\partial^2_{x_i x_{j}}\Gamma(\z^{-1}\circ z) d\z. \\
\end{split}
\end{equation}
By the definition of $\eta_R$, we have
\begin{eqnarray}\label{bound-eta}
0 \leq \eta_R \leq 1, \quad \eta_R(\z)-\eta_R(z) =0, \quad \textrm{for any $\z \in \H_{\frac{3R}{4}}(0)$, $z \in \H_{\frac{R}{2}}(0)$}.
\end{eqnarray}
Thus, taking advantage of Lemma \ref{prop-ring}, we infer
\begin{equation}\label{est-const}
\begin{split}
&\bigg\vert \int_{\H_R(0) \cap \lbrace \Vert \z^{-1}\circ z \Vert_K \ge \varepsilon  \rbrace}\partial^2_{x_i x_{j}}\Gamma(\z^{-1}\circ z) \left(\eta_R(\z)-\eta_R(z)\right) d\z \bigg\vert\\
&\quad=\bigg\vert\int_{\H_R(0) \setminus \H_{\frac{3R}{4}(0)}}\partial^2_{x_i x_{j}}\Gamma(\z^{-1}\circ z) \left(\eta_R(\z)-\eta_R(z)\right) d\z \bigg\vert \leq \frac{C}{R^{Q+2}}R^{Q+2}=C
\end{split}
\end{equation}
Thus we find
\begin{equation}\label{est-const2}
\begin{split}
&I^0_2(z)+ \lim_{\varepsilon \to 0}\eta_R(z) \int_{\H_R(0) \cap \lbrace \Vert \z^{-1}\circ z \Vert_K \ge \varepsilon \rbrace}\partial^2_{x_i x_{j}}\Gamma(\z^{-1}\circ z) d\z=C.
%&=  \eta_R(z) \int_{\Vert \z \Vert_K=1} \partial_{x_i} \Gamma_0(\z)\nu_j d \sigma(\z)\\
%&\quad \eta_R(z)\int_{\Vert w \Vert_K=1}\partial_{w_i}\Gamma_0(w)\nu_j d\sigma_j(w)\\
%&\quad+ \eta_R(z)\int_{\Vert w\Vert_K =\textbf{c}R} \partial_{w_i}\Gamma(w)e^{-\tau tr B} \nu_j d\sigma_j(w)\\
%&= \eta_R(z)\int_{\Vert w\Vert_K =\textbf{c}R} \partial_{w_i}\Gamma(w)e^{-\tau tr B} \nu_j d\sigma_j(w).
\end{split}
\end{equation}
Combining estimates \eqref{est-const} and \eqref{est-const2} we conclude the proof of \eqref{est-sec-gamma-const}.

We now prove \eqref{est-sec-gamma}. Reasoning as in \eqref{2.111} and exploiting the definition of $g_1$, we can rewrite the right-hand side of \eqref{est-sec-gamma} as 
\begin{equation}\label{2.11}
\begin{split}
\partial^2_{x_i x_{j}}\int_{\H_R(0)} \!\!\! \!\!\! \Gamma(\z^{-1}\circ z) \eta_R(\z)\langle v,\xi \rangle d\z &=\lim_{\varepsilon \to 0} \int_{\H_R(0) \cap \lbrace \Vert \z^{-1}\circ z \Vert_K \ge \varepsilon \rbrace}
\!\!\! \!\!\!  \partial^2_{x_i x_{j}}\Gamma(\z^{-1}\circ z) \eta_R(\z)\langle v,\xi \rangle d\z \\
&\quad + \langle v, x \rangle \eta_R(z) \int_{\Vert \z \Vert_K=1} \partial_{x_i} \Gamma_0(\z)\nu_j d \sigma(\z)\\
&=:\lim_{\varepsilon \to 0}I_1^1(\varepsilon,z)+I_2^1(z).
\end{split}
\end{equation}
We prove that the first integral in \eqref{2.11} uniformly converges as $\varepsilon \to 0^+$. We first rewrite $I_1^1(\varepsilon,z)$ as
\begin{equation}
\begin{split}
I_1^1(\varepsilon,z)&=\int_{\H_R(0) \cap \lbrace \Vert \z^{-1}\circ z \Vert_K  \ge   \varepsilon \rbrace}\partial^2_{x_i x_{j}}\Gamma(\z^{-1}\circ z) \left(\eta_R(\z)-\eta_R(z)\right)\langle v,\xi \rangle d\z \\
&\quad +\int_{\H_R(0) \cap \lbrace \Vert \z^{-1}\circ z \Vert_K \ge \varepsilon \rbrace}\partial^2_{x_i x_{j}}\Gamma(\z^{-1}\circ z) \left(\eta_R(z)\right)\langle v,\xi -x \rangle d\z \\ 
&\quad +\langle v, x \rangle \eta_R(z) \int_{\H_R(0) \cap \lbrace \Vert \z^{-1}\circ z \Vert_K  \ge   \varepsilon \rbrace}\partial^2_{x_i x_{j}}\Gamma(\z^{-1}\circ z) d\z \\
&=: I'_1(\varepsilon,z)+I'_2(\varepsilon, z)+I'_3(\varepsilon,z).
\end{split}
\end{equation}
%We first estimate $I'(\varepsilon,z)$. To this end, we observe that the pseudo-triangular inequality for two points $\z \in \H_R(0) \setminus \H_{\frac{3R}{4}}(0)$ and $z \in \H_{\frac{R}{2}}(0)$ yields
%\begin{eqnarray}\label{distriinv}
%\Vert \z^{-1}\circ z \Vert_K \geq C R.
%\end{eqnarray}
%
%we use \eqref{distriinv} and find
%\begin{eqnarray}\label{lip-c-o}
%|\eta_R(z) - \eta_R(\z)|\leq 1 \leq \frac{\Vert \z^{-1}\circ z \Vert_K}{C \cdot R}.
%%=c'2^k\Vert \z^{-1}\circ z \Vert_K.
%\end{eqnarray}
%On the other hand, $\eta_R(z)-\eta_R(\z)=0$ for $z \in \H_{\frac{R}{2}}(0)$, $\z \in \H_{\frac{3R}{4}}(0)$ and therefore \eqref{lip-c-o} holds for every $z \in \H_{\frac{R}{2}}(0)$, $\z \in \H_R(0)$. 
To estimate $I'_1(\varepsilon,z)$ we use the same argument as in \eqref{est-const}, with the only difference that now in the integral we have the additional term $\langle v, \xi \rangle$. We find a bound for this term observing that

\begin{equation}\label{scal-prod-I1}
|\langle v, \xi \rangle | \leq \Vert v \Vert \cdot \Vert \z \Vert_K \leq \Vert v \Vert \cdot R,
\end{equation}
where $\Vert v \Vert$ denotes the norm of $v$ in $\rn$.
Therefore, we obtain
\begin{eqnarray}\label{lim-2}
|I'_1(\varepsilon,z)| \leq C \int_{\H_R(0) \cap \lbrace \Vert \z^{-1}\circ z \Vert_K  \ge   \varepsilon \rbrace}\frac{d\z}{\Vert \z^{-1}\circ z \Vert_K^{Q+1}}\leq c \, R,
\end{eqnarray}
where $C$ is a constant that depends only on $\lambda, \Lambda, B$ and $v$.

We now show that the same bound holds for $I'_2(\varepsilon,z)$. We first observe that
\begin{eqnarray}
|\langle v, x-\xi \rangle| \leq \Vert v \Vert \cdot \Vert \z^{-1} \circ z \Vert_K,
\end{eqnarray}
%as $z \in \H_{k+1}$, $\z \in \H_k$ and $\z$ is such that $ \varepsilon \leq \Vert \z^{-1}\circ z \Vert_K$. 
As a consequence, using again \eqref{bound-eta} and \eqref{hom-1}, we infer
\begin{eqnarray}\label{lim-1}
|I'_2(\varepsilon,z)| \leq  \int_{\H_R(0) \cap \lbrace \Vert \z^{-1}\circ z \Vert_K  \ge   \varepsilon \rbrace}\frac{d\z}{\Vert \z^{-1}\circ z \Vert_K^{Q+1}}\leq c (R -\varepsilon) \le c \, R.
\end{eqnarray}
Using \eqref{lim-2} and \eqref{lim-1} we obtain
\begin{eqnarray}
\lim_{\varepsilon \to 0^+}I'_1(\varepsilon,z) =O(R), \qquad \lim_{\varepsilon \to 0^+}I'_2(\varepsilon,z) =O(R),\quad \textrm{as $R \to 0$}.
\end{eqnarray} 
Finally, as for $I'_3(\varepsilon,z)$, we compute
\begin{equation*}
\begin{split}
&\lim_{\varepsilon \to 0}\int_{\H_R(0) \cap \lbrace  \varepsilon \leq \Vert \z^{-1}\circ z \Vert_K \leq \textbf{c}R \rbrace}\partial^2_{x_i x_{j}}\Gamma(\z^{-1}\circ z)d\z \\
&=\lim_{\varepsilon \to 0}\int_{\H_R(0) \cap \lbrace  \varepsilon \leq \Vert \z^{-1}\circ z \Vert_K \leq \textbf{c}R \rbrace}\partial^2_{w_i w_{j}}\Gamma(w) e^{-\tau \, {\rm tr} B}dw\\
&=-\lim_{\varepsilon \to 0} \int_{\Vert w\Vert_K =\varepsilon} \partial_{w_i}\Gamma(w)e^{-\tau \, {\rm tr} B} \nu_j d\sigma_j(w)+\lim_{\varepsilon \to 0} \int_{\Vert w\Vert_K =\textbf{c}R} \partial_{w_i}\Gamma(w)e^{-\tau \, {\rm tr} B} \nu_j d\sigma_j(w) \\
&=-\int_{\Vert w \Vert_K=1}\partial_{w_i}\Gamma_0(w)\nu_j d\sigma_j(w)+\int_{\Vert w\Vert_K =\textbf{c}R} \partial_{w_i}\Gamma(w)e^{-\tau \, {\rm tr} B} \nu_j d\sigma_j(w).
\end{split}
\end{equation*}
We then obtain, 
\begin{equation}
\begin{split}
I_2^1(z)+ \lim_{\varepsilon \to 0}I'_3(\varepsilon, z) &= \langle v, x \rangle \eta_R(z) \int_{\Vert \z \Vert_K=1} \partial_{x_i} \Gamma_0(\z)\nu_j d \sigma(\z)\\
&\quad-\langle v, x \rangle \eta_R(z)\int_{\Vert w \Vert_K=1}\partial_{w_i}\Gamma_0(w)\nu_j d\sigma_j(w)\\
&\quad+\langle v, x \rangle \eta_R(z)\int_{\Vert w\Vert_K =\textbf{c}R} \partial_{w_i}\Gamma(w)e^{-\tau \, {\rm tr} B} \nu_j d\sigma_j(w)\\
&=\langle v, x \rangle \eta_R(z)\int_{\Vert w\Vert_K =\textbf{c}R} \partial_{w_i}\Gamma(w)e^{-\tau \, {\rm tr} B} \nu_j d\sigma_j(w).
\end{split}
\end{equation}
Keeping in mind that
\begin{eqnarray*}
\lim_{R\to 0}\int_{\Vert w\Vert_K =\textbf{c}R} \partial_{w_i}\Gamma(w)e^{-\tau \, {\rm tr} B} \nu_j d\sigma_j(w)=\int_{\Vert w \Vert_K=1}\partial_{w_i}\Gamma_0(w)\nu_j d\sigma_j(w)=c',
\end{eqnarray*}
we finally find 
\begin{equation}\label{lim-3}
I_2^1(z)+ \lim_{\varepsilon \to 0}I'_3(\varepsilon, z)=O(R),\quad \textrm{as $R\to 0$}.
\end{equation}
Identity \eqref{est-sec-gamma} follows from \eqref{lim-2}, \eqref{lim-1} and \eqref{lim-3}.

By the same argument, we obtain
\begin{equation}\label{der-seconde2}
\partial^2_{x_i x_{j}}\int_{\H_R(0)} \Gamma(\z^{-1}\circ z) \eta_R(\z)\langle M \xi,\xi \rangle d\z = O(R^2).
\end{equation}
We omit the details here as the procedure is analogous.
}

From Proposition \ref{lem-apriori} and Lemma \ref{lem-tec}, we derive the following result.
\begin{lemma}\label{lem-tec-2}
Let $w$ be a solution to $\L w = \langle v, \xi \rangle + \langle M \xi ,\xi \rangle$ in $\H_R(0)$, where $v$ and $M$ are as in 
\eqref{def-g}. Then
\begin{eqnarray}\label{est-lem-tec-2}
|\partial^2_{x_i x_j} w(z)| \leq \frac{C}{R^2}\Vert w \Vert_{L^\infty(\H_R(0))} + C R,
\end{eqnarray}
for every $i,j=1,\ldots,m, 0 < R \le 1$ and for any $z \in \H_{\frac{R}{2}(0)}$.
\end{lemma}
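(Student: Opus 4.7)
The strategy is to split $w = w_1 + w_2$, where $w_2$ is a Newtonian-type potential that carries the polynomial right-hand side and $w_1$ is an $\L$-harmonic remainder to which the a priori estimates of the present section apply. With $\eta_R$ the cut-off from \eqref{cut-off} and $g(\zeta) := \langle v,\xi\rangle + \langle M\xi,\xi\rangle = g_1(\zeta)+g_2(\zeta)$, set
\begin{equation*}
 w_2(z) := -\int_{\H_R(0)} \Gamma(z,\zeta)\,\eta_R(\zeta)\, g(\zeta)\, d\zeta, \qquad w_1 := w - w_2.
\end{equation*}
Because $\Gamma$ is the fundamental solution of $\L$ and $\eta_R \equiv 1$ on $\H_{3R/4}(0)$, one has $\L w_2 = g$ throughout $\H_{3R/4}(0)$, and therefore $\L w_1 = 0$ on the same ball.

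First I would bound $|\partial^2_{x_i x_j} w_2|$ on $\H_{R/2}(0)$: by linearity and Lemma \ref{lem-tec} applied separately to $g_1$ and $g_2$, and absorbing $R^2 \le R$ since $R \le 1$, one obtains $|\partial^2_{x_ix_j} w_2(z)| \le C R$. Next, to bound $|\partial^2_{x_ix_j} w_1|$ I would invoke Proposition \ref{corollary} on the ball where $w_1$ is $\L$-harmonic, using that each $\partial^2_{x_ix_j}$ with $i,j\le m$ is a linear combination of $X_k X_\ell$ whose coefficients depend only on $\lambda,\Lambda$ (since $A^{1/2}$ is invertible). Up to a harmless rescaling of radii, this yields
\begin{equation*}
|\partial^2_{x_i x_j} w_1(z)| \le \frac{C}{R^2}\,\|w_1\|_{L^\infty(\H_{3R/4}(0))}, \qquad z\in \H_{R/2}(0).
\end{equation*}
It remains to estimate $\|w_2\|_\infty$: from the crude bound $|g(\zeta)| \le (|v|+\|M\|)R$ on $\H_R(0)$, combined with the homogeneity estimate $\int_{\H_R(0)}\Gamma(z,\zeta)\,d\zeta = O(R^2)$ (consequence of $\Gamma(z,\zeta)\le c\|\zeta^{-1}\circ z\|_K^{-Q}$ from \eqref{hom-1} and $|\H_R(0)| = O(R^{Q+2})$), one gets $\|w_2\|_\infty \le CR^3$, so that $R^{-2}\|w_2\|_\infty \le CR$ is harmlessly absorbed into the $CR$ term. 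Assembling the two pieces and using $\|w_1\|_\infty \le \|w\|_\infty + \|w_2\|_\infty$ yields \eqref{est-lem-tec-2}.

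The only delicate point will be aligning the radii in the application of Proposition \ref{corollary}. A direct application on $\H_{3R/4}(0)$ delivers the a priori bound only on $\H_{3R/8}(0)$, not on the larger $\H_{R/2}(0)$; this gap is closed by replacing $\eta_R$ in the definition of $w_2$ by a cutoff equal to $1$ on $\H_{(1-\varepsilon)R}(0)$ for a fixed small $\varepsilon>0$, still supported in $\H_R(0)$, and redoing the previous steps with this cutoff. All the constants then remain under control, depending only on $\lambda,\Lambda,B,v,M$.
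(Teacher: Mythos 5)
Your decomposition $w = w_1 + w_2$ with $w_2(z) = -\int_{\H_R(0)}\Gamma(z,\zeta)\eta_R(\zeta)g(\zeta)\,d\zeta$ is essentially the same idea as the paper's proof, which also splits off the polynomial potential via the representation formula and invokes Lemma~\ref{lem-tec} for it; the paper simply keeps the split implicit by writing $w=\eta_R w$ and expanding $\L(\eta_R w)$ into annulus terms plus the bulk term $\eta_R g$. The auxiliary observations you make (the $L^\infty$ bound $\|w_2\|_\infty\le CR^3$ via $|g|\le CR$ on $\H_R(0)$ and $\int_{\H_R(0)}\Gamma(z,\cdot)\,d\zeta=O(R^2)$, and the reduction of $\partial^2_{x_ix_j}$ to $X_kX_\ell$ via the invertible constant matrix $A^{1/2}$) are all sound.

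The genuine gap is in your treatment of the radii, and the fix you propose does not close it. With the cutoff $\eta_R$ you have $\L w_1=(1-\eta_R)g$, which vanishes only on $\H_{3R/4}(0)$; Proposition~\ref{corollary} applied to the ball $\H_{3R/4}(0)$ yields the bound for $z\in\H_{3R/8}(0)$, not $\H_{R/2}(0)$. Replacing $\eta_R$ by a cutoff equal to $1$ on $\H_{(1-\varepsilon)R}(0)$ makes $w_1$ $\L$-harmonic on $\H_{(1-\varepsilon)R}(0)$, and Proposition~\ref{corollary} then gives the bound only on $\H_{(1-\varepsilon)R/2}(0)$, which is \emph{still strictly smaller} than $\H_{R/2}(0)$ for every $\varepsilon>0$. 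A covering/translation argument does not rescue this either, because the pseudo-triangle constant ${\bf c}_H$ in \eqref{e-ps.tr.in} need not be close to $1$, so a ball $\H_{cR}(z)$ centered at $z\in\H_{R/2}(0)$ may fail to fit inside $\H_{3R/4}(0)$ for any $c>0$.

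Two ways to repair this. One is to re-derive Lemma~\ref{prop-ring} and Propositions~\ref{lem-apriori}--\ref{corollary} with the inner ball $\H_{cR}(0)$ and the annulus $\H_R(0)\setminus\H_{c'R}(0)$ for some $\tfrac12<c<c'<1$; the compactness argument in Lemma~\ref{prop-ring} goes through verbatim for any such pair, and then Proposition~\ref{corollary} applied on $\H_R(0)$ gives the a priori bound on a ball containing $\H_{R/2}(0)$ once you adjust the cutoff accordingly. The simpler route, which is what the paper does, is not to invoke Proposition~\ref{corollary} at all: apply the representation formula to $\eta_R w_1$ on $\H_{R/2}(0)$. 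Then
\begin{equation*}
\L(\eta_R w_1)=\eta_R\L w_1+w_1\,\mathrm{div}(AD_m\eta_R)+2\langle D_m w_1,AD_m\eta_R\rangle+w_1\,Y\eta_R,
\end{equation*}
and since $\eta_R\L w_1=\eta_R(1-\eta_R)g$ is also supported in $\H_R(0)\setminus\H_{3R/4}(0)$, \emph{every} term in $\partial^2_{x_ix_j}w_1(z)$ is an integral over the annulus, where Lemma~\ref{prop-ring} directly gives $|\partial^2_{x_ix_j}\Gamma(z,\zeta)|\le CR^{-Q-2}$ for $z\in\H_{R/2}(0)$. This yields $|\partial^2_{x_ix_j}w_1(z)|\le CR^{-2}\|w_1\|_\infty+CR$ on the full $\H_{R/2}(0)$ with no radius loss.
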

\pr{
Reasoning as in Proposition \ref{lem-apriori}, we obtain
\begin{equation}\label{overlineI_2}
\begin{split}
\vert \partial^2_{x_i x_{j}}w(z)\vert 
%|\partial^2_{x_i x_{j}}v(z)| 
&\leq \int_{\H_R(0) \setminus \H_{\frac{3R}{4}}(0)}\big\vert[\partial^2_{x_i x_{j}} \Gamma(z,\cdot) \div(A D_ m(\eta_{R})) w ](\z)\big\vert d\z \\ 
&\quad + \int_{\H_R(0) \setminus \H_{(3/4) \varrho^k}(0)}\big\vert[\partial^2_{x_i x_{j}} \Gamma(z,\cdot) Y(\eta_{R}) w ](\z)\big\vert d\z \\    
&\quad +2 \int_{\H_R(0) \setminus \H_{\frac{3R}{4}}(0)}\big\vert[\partial^2_{x_i x_{j}} \Gamma(z,\cdot) \langle  D_m w, A D_m \eta_R\rangle ] (\z)\big\vert d\z \\
&\quad +\bigg\vert\Big[\partial^2_{x_i x_{j}}\int_{\H_R(0)}[ \Gamma(z,\cdot) \eta_R ](\z)\langle v,\xi \rangle  d\z \Big]\bigg\vert\\
&\quad +\bigg\vert\Big[\partial^2_{x_i x_{j}}\int_{\H_R(0)}[ \Gamma(z,\cdot) \eta_R ](\z)\langle M \xi,\xi \rangle  d\z \Big]\bigg\vert\\
& =: \overline{I}_{1}(z) + \overline{I}_{2}(z)+\overline{I}_3(z)+\overline{I}_{4}(z)+\overline{I}_{5}(z).
\end{split}
\end{equation}
The terms $\overline{I}_{1}(z), \overline{I}_{2}(z), \overline{I}_3(z)$ were already estimated in Proposition \ref{lem-apriori} as
\begin{eqnarray*}
\overline{I}_{1}(z),\overline{I}_{2}(z), \overline{I}_{3}(z) \leq \frac{C}{R^2}\Vert w \Vert_{L^\infty(\H_R(0))}, \quad z \in \H_{\frac{R}{2}(0)}.
\end{eqnarray*}
Additionally, $\overline{I}_{4}(z)$ and $\overline{I}_{5}(z)$ are $O(R)$ in virtue of Lemma \ref{lem-tec} and thus \eqref{est-lem-tec-2} is proved.
}
}

We now prove a mean value theorem for solutions $u$ to $\L u=0$ in cylinders $\H_R(\z)$.
\begin{proposition} [Mean value theorem] \label{mean-value-lem}
Let $\z$ be any point of $\R^{N+1}$, and let $u$ be a solution to $\L u=0$ in $\H_R(\z)$, with $R \in ]0,1]$. Then the following estimate holds
\begin{eqnarray}\label{lag}
\vert u (z)- u (\z) \vert \leq \frac{C}{R}d_K(z,\z) \Vert u \Vert_{L^{\infty}(\H_{R}(\z))},
\end{eqnarray}
for every $z \in \H_\frac{R}{2}(\z)$. Here $C$ is a constant that only depends on $\lambda, \Lambda$ and on the matrix $B$.
\end{proposition}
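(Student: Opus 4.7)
The plan is to combine the a priori first-order derivative bounds of Propositions \ref{lem-apriori} and \ref{corollary} with the fundamental theorem of calculus along a well-chosen path from $\z$ to $z$. By hypoellipticity $u \in C^\infty(\H_R(\z))$, so all derivatives used are well-defined and continuous. First I would apply the two propositions with center $z_0 = \z$ and radius $R$ to obtain, for every $w \in \H_{R/2}(\z)$,
\begin{equation*}
|\partial_{x_j} u|(w) \le \frac{C}{R^{\alpha_j}} \|u\|_{L^\infty(\H_R(\z))}, \quad j = 1, \dots, N, \qquad |Yu|(w) \le \frac{C}{R^2} \|u\|_{L^\infty(\H_R(\z))}.
\end{equation*}

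Writing $\z = (\xi, \tau)$, $z = (x, t)$, and $h := \z^{-1} \circ z = (y, s)$, we have $s = t - \tau$, $y = x - E(s) \xi$, $\|h\|_K = d$, $|s|^{1/2} \le d$, and $|y_j|^{1/\alpha_j} \le d$ for every $j$. I would then join $\z$ to $z$ by two successive arcs. The first arc is $\alpha(\sigma) := \z \circ (0, \sigma) = (E(\sigma)\xi, \tau + \sigma)$, $\sigma \in [0, s]$, which is an integral curve of $Y$: the identity $E'(\sigma) = -B E(\sigma)$ gives $\frac{d}{d\sigma} u(\alpha(\sigma)) = -Yu(\alpha(\sigma))$. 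Since $\|\z^{-1} \circ \alpha(\sigma)\|_K = |\sigma|^{1/2} \le d \le R/2$, the arc lies in $\H_{R/2}(\z)$, and integration yields $|u(\alpha(s)) - u(\z)| \le C d^2 R^{-2} \|u\|_{L^\infty(\H_R(\z))}$.

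The second arc translates coordinate-by-coordinate from $\alpha(s) = (E(s)\xi, t)$ to $z = (E(s)\xi + y, t)$, moving the $k$-th Euclidean spatial coordinate by $y_k$ for $k = 1, \dots, N$. A direct computation using \eqref{e70} shows that the intermediate points $\beta_k(\sigma)$ satisfy $\|\z^{-1} \circ \beta_k(\sigma)\|_K \le d$ (the norm reduces to a maximum over a subset of the quantities $|y_j|^{1/\alpha_j}$ and $|s|^{1/2}$), hence every segment lies in $\H_{R/2}(\z)$. Integrating the Euclidean derivative $\partial_{x_k} u$ along each segment gives a contribution bounded by $|y_k| \cdot C R^{-\alpha_k} \|u\|_{L^\infty(\H_R(\z))} \le C (d/R)^{\alpha_k} \|u\|_{L^\infty(\H_R(\z))}$.

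Summing the $N+1$ contributions and using $d/R \le 1/2 < 1$ together with $\alpha_j \ge 1$ (so that $(d/R)^{\alpha_j} \le d/R$ and $(d/R)^2 \le d/R$), we arrive at $|u(z) - u(\z)| \le C (d/R) \|u\|_{L^\infty(\H_R(\z))}$, which is \eqref{lag}. The only delicate point is the geometric verification that every intermediate point of the two arcs belongs to $\H_{R/2}(\z)$; once this is granted, the remainder reduces to routine one-dimensional integrations.
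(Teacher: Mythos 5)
Your proof is correct and follows essentially the same approach as the paper's: a path from $\z$ to $z$ that first moves along the integral curve of $Y$ and then in Euclidean spatial directions, with each leg estimated via the a priori bounds of Propositions \ref{lem-apriori} and \ref{corollary}. The paper's only simplification is to first reduce to $\z = 0$ using left-invariance of $\L$, after which your path becomes a segment along the time axis followed by Euclidean spatial segments and the geometric verification that the intermediate points lie in $\H_{R/2}(0)$ is immediate from the box form of the norm \eqref{def-norm}.
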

\pr{
Thanks to the left-invariance of the operator $\L$, it is not restrictive to assume $\z=0$, then we need to prove 
\begin{eqnarray*}
\vert u (z)- u (0) \vert \leq \frac{C}{R}\Vert z \Vert_K \Vert u \Vert_{L^{\infty}(\H_{R}(0))}.
\end{eqnarray*}
Consider $z=(x,t) \in \H_{\frac{R}{2}}(0)$, and apply the standard mean-value theorem
\begin{equation}\label{sum}
\begin{split}
|u(z)-u(0)| &= |u(x_1,\ldots,x_N,t)-u(0,\ldots,0,0)| \\ 
&\leq \sum_{i=1}^{N} |x_i| \, |\partial_{x_i} u(\vartheta_1 x_1,\ldots,\vartheta_N x_N,t)| + |t| \,|Yu(0,\ldots,0,\vartheta t),
\end{split}
\end{equation}
where $\vartheta_1,\ldots,\vartheta_N,\vartheta \in ]0,1[$. For every $i=1,\ldots,N$, we have $|x_i| \leq \Vert z \Vert_K^{\alpha_i} \leq R^{\alpha_i}$,
and $(\vartheta_1 x_1,\ldots,\vartheta_N x_N,t) \in \H_{\frac{R}{2}}(0)$. Then, by Proposition \ref{lem-apriori}, we find
\begin{eqnarray*}
|\partial_{x_i} u (\vartheta_1 x_1,\ldots,\vartheta_N x_N,t)| \leq \frac{c}{R^{\alpha_i}} \Vert u \Vert_{L^{\infty}(\H_{R}(0))}.
\end{eqnarray*}
so that
\begin{eqnarray*}
|x_i| \, |\partial_{x_i} u (\vartheta_1 x_1,\ldots,\vartheta_N x_N,t)| \leq 
\frac{c}{R} \Vert z \Vert_K \Vert u \Vert_{L^{\infty}(\H_{R}(0))}.
\end{eqnarray*}
Analogously, we have that $|\vartheta t| \le | t| \le \Vert z \Vert_K^{2} \leq R^2$, and from Proposition \ref{corollary} it follows that
\begin{eqnarray*}
|Y u (0,\dots, 0,\vartheta t)| \leq \frac{c}{R^{2}} \Vert u \Vert_{L^{\infty}(\H_{R}(0))},
\end{eqnarray*}
thus
\begin{eqnarray*}
|t| \, |Y u (0,\dots, 0,\vartheta t)| \leq \frac{c}{R} \Vert z \Vert_K \Vert u \Vert_{L^{\infty}(\H_{R}(0))}.
\end{eqnarray*}
The proof of the proposition can be obtained by combining the above estimates.
}

\setcounter{equation}{0}\setcounter{theorem}{0}
\section{Taylor formula}
In this Section we prove Theorem \ref{taylor}. The proof is based on the method introduced by Pagliarani, Pascucci and Pignotti in \cite{PagliaPascPigno} for the dilation-invariant operator $\L_0$ and then generalized by Pagliarani and Pignotti in \cite{PagliaPigno} to the non dilation-invariant one. 
In both articles the function $u$ is assumed to belong to some H\"older space $C^{2+ \alpha}_\L(\Omega)$ and the following bound 
\begin{eqnarray}\label{tayH-estimate}
u(\z)-T^2_{z}u(\z)=O(\Vert z^{-1} \circ \z \Vert_K^{2+ \alpha}) \quad \text{as} \quad \zeta \to z
\end{eqnarray} 
is proved instead of \eqref{tay-estimate}. Note that the case $\alpha = 0$ is covered in \eqref{tayH-estimate}, but \eqref{tay-estimate} contains a stronger statement. We follow the same procedure introduced in \cite{PagliaPascPigno} and \cite{PagliaPigno} and we point out the modifications needed to deal with our slightly different situation. 

We next introduce some further notation. We define the spaces $V_0, \dots, V_\kappa$ as the vector subspaces of $\mathbb{R}^N$ which are invariant with respect to dilation $(\delta_r)_{r>0}$ introduced in \eqref{e-dilations}. Specifically, for $n=0,\ldots,\kappa$, we set
\begin{equation*}
 V_n := \lbrace 0 \rbrace^{\bar{m}_{n-1}} \times \mathbb{R}^{m_n} \times \lbrace 0 \rbrace^{N- \bar{m}_{n}},
\end{equation*}
where $\bar{m}_{n}:=m_0+\ldots m_n$, with $m_{-1}\equiv0$. Moreover, we let $x^{[n]}$ be the projection of $x \in \mathbb{R}^N$ on $V_n$. Note that
\begin{eqnarray} \label{eq-sommadiretta}
\mathbb{R}^N = \bigoplus_{n=0}^{\kappa} V_n, \qquad x = x^{[0]} + \dots + x^{[\kappa]},
%\quad V_n := \lbrace x^{[n]} | x \in \mathbb{R}^N \rbrace,
\end{eqnarray}
for every $x \in \R^N$. Moreover, in accordance with the dilation $(\delta_r)_{r>0}$, we have
\begin{eqnarray}
\delta_r (x^{[n]})=r^{2n+1}x^{[n]}, \quad \forall x^{[n]} \in V_n,
\end{eqnarray}
for every $n=0,\ldots,\kappa$. In virtue of assumption {\bf[H.1]}, the linear application $B^n : V_0 \rightarrow V_n$ is surjective; however, it is in general not injective.
Thus, we define the subspaces $V_{0,n} \subset V_0$ as follows
\begin{eqnarray*}
V_{0,n}:= \ker(B^n)^{\perp}.
%V_{0,n}:= \lbrace x \in V_0 | x_j = 0 \quad \forall j \not\in \Pi_{B,n} \rbrace,
\end{eqnarray*}
%where $\Pi_{B,n}$ is the set of the indexes corresponding to the first $m_n$ linear independent columns of $\prod_{j=1}^n B_j$.
The linear map $B^n : V_{0,n} \rightarrow V_n$ is now bijective.

\medskip

%\anna{As we need to estimate the remainder in \eqref{tay-estimate} in terms of the homogeneous norm $\Vert z^{-1} \circ \z \Vert_K^2 $, 

The method of the proof relies on the construction of a finite sequence of points which connect $z = (x,t)$ and $\z=(\xi, \tau)$ and are located along suitable trajectories. More precisely, we start from $z$ and choose $z_1 = (x_1, t_1)$ as the point along the integral curve of the drift $Y$ satisfying the condition $t_1 = \tau$. We then move along the integral paths of $X_1,\ldots,X_m$ to a point $z_2 = (x_2, t_2)$ such that $x_2^{[0]}= \xi^{[0]}$ and $t_2=\tau $. This allows us to exploit the regularity of $u$ along the vector fields $X_1,\ldots,X_m, Y $ and estimate the remainder in \eqref{tay-estimate} in terms of the homogeneous norm of the new points. 

Since we have no apriori regularity of $u$ with respect to other vector fields, we increment the higher level coordinates $x^{[1]}, \dots, x^{[\kappa]}$ by moving along trajectories defined as concatenations of integral curves of $X_1,\ldots,X_m,Y$. 
%To prove the main theorem of this Section, we use the construction given in \cite{PagliaPascPigno}, i.e. we connect two points in $\mathbb{R}^N$ using concatenations of integral curves of $X_i$ and $Y$.
%For ease of exposition, we recall the definition of such trajectories: 
%Following Pagliarani, Pascucci and Pignotti (see \cite{PagliaPascPigno}) we now show how to approximate the integral paths of the vector fields $X_1,\ldots,X_m$ and $Y$, by using a classical technique from control theory. 
Specifically, for any $z \in \mathbb{R}^{N+1}$ and $s \in \mathbb{R}$ we define iteratively the family of trajectories $(\gamma_{v,s}^{(n)}(z))_{n=0,\ldots,\kappa}$ as follows
\es{traj}{
\gamma_{v,s}^{(0)}(z)&=e^{s X_v}(z)=(x+s v,t)\\
\gamma_{v,s}^{(n+1)}(z)&=e^{-s^2Y}(\gamma_{v,-s}^{(n)}(e^{s^2Y}(\gamma_{v,s}^{(n)}(z)))),
}
where $v$ is a suitable vector in $V_0$, and $X_v = v_1 \partial_{x_1} + \dots + v_m \partial_{x_m}$.

At this point we need to distinguish the dilation-invariant operators from the non dilation-inviariant ones. In the first case, the trajectories $(\gamma_{v,s}^{(n)}(z))_{n=0,\ldots,\kappa}$ have the remarkable property of modifying the components $x^{[n]}+\dots + x^{[\kappa]}$ leaving unchanged the components $x^{[0]}+\dots + x^{[n-1]}$; thus, we reach the point $\zeta$ after $\kappa$ steps. The proof of Theorem \ref{taylor}, for dilation-invariant operators, follows by exploiting the regularity of $u$ with respect to $X_1, \dots, X_m, Y$, as we connect $z$ to $\zeta$ along integral curves of the vector fields $X_1, \dots, X_m, Y$. The next example illustrates the geometric construction in the simplest case, corresponding to $\kappa = 1$.

\begin{example}\label{simple_ex}
We consider the degenerate Kolmogorov operator 
\begin{equation*}
\mathcal{K}_0:= \partial^2_{x x} + x \partial_y -\partial_t
\end{equation*}
and show how to use the trajectories defined in \eqref{traj} to connect an arbitrary point $z \in \mathbb{R}^3$ with the origin.
In this case, we have
\begin{equation*}
 			   %\label{B_0}
 			   B = \begin{pmatrix}
 			       0 &  0   \\
 			       1 & 0 
   			 \end{pmatrix}
\end{equation*}
and thus
\begin{eqnarray*}
e^{s X}(x,y,t)=(x+s,y,t), \quad e^{s Y}(x,y,t)=(x,y+sx,t-s).
\end{eqnarray*}
Moreover, 
\begin{eqnarray*}
\mathbb{R}^2 =V_0 \oplus V_1 = \textrm{\rm{span}} \lbrace e_1 \rbrace \oplus \textrm{\rm{span}} \lbrace e_2 \rbrace, \quad V_{0,0}=V_{0,1} = \textrm{\rm{span}} \lbrace e_1 \rbrace.
\end{eqnarray*}
Let $z=(x,y,t)$ be a point in $\mathbb{R}^3$, and consider for simplicity $\zeta = (0,0,0)$. We first adjust the temporal component by moving along the drift $Y$, and we reach the point
\begin{eqnarray*}
z_1=e^{t Y}(z)=(x,y+t x,0).
\end{eqnarray*}
We then move along the integral curve of the vector field $X$ to make $x$ equal to $0$:
\begin{eqnarray*}
z_2=e^{s_0 X}(z_1)=(x+s_0, y+tx,0)=(0,y+tx,0),  \quad \text{by choosing} \quad s_0 = -x.
\end{eqnarray*}
We reached the point $z_2 \in V_1$ and we plan to steer it to $(0,0,0)$. We move along a curve defined as concatenation of integral paths of $X$ and $Y$ as follows:
\es{ex-traj}{
z_3&=e^{s_1 X}(z_2)=(s_1,y+tx,0),\\
z_4&=e^{s_1^2 Y}(z_3)=(s_1,y+tx + s_1^3 ,-s_1^2),\\
z_5&=e^{-s_1 X}(z_4)=(0,y+tx + s_1^3 ,-s_1^2),\\
z_6&=e^{-s_1^2 Y}(z_5) =(0,y+tx + s_1^3 ,0),
}
and we reach the point $\zeta = (0,0,0)$ if we choose $s_1 = (-tx-y)^{\frac{1}{3}}$.
\end{example}

When considering a non dilation-invariant operator $\L$, the method illustrated above fails. Indeed, in this case the trajectory $(\gamma_{v,s}^{(n)}(z))$ may affect the components $x^{[0]}+\dots + x^{[n-1]}$, as the following example shows. 

\begin{example}\label{ex-2}
We consider the degenerate Kolmogorov operator
\begin{eqnarray}\label{eq-K-ndi}
\mathcal{K}:= \partial_{x x}^2 +x \partial_y +x \partial_ x  -\partial_t.
\end{eqnarray}
In this case, $B$ takes the form
\begin{equation*}
 			   %\label{B_0}
 			   B = \begin{pmatrix}
 			       1 &  0   \\
 			       1 & 0 
   			 \end{pmatrix}.
\end{equation*}
and therefore the operator $\mathcal{K}$ is non dilation-invariant.
Let us emphasize the differences with the dilation-invariant case studied in Example \ref{simple_ex}. We denote again the points in $\mathbb{R}^3$ by $z=(x,y,t)$ and consider $\zeta = (0,0,0)$. The first two steps of the procedure used in Example \ref{simple_ex} allow us to move from $z$ to some point $z_1 = (x_1, y_1,0)$, then to some other point $z_2 = (0, y_2,0)$. The difference with the homogeneous case arises in the third step, i.e. when we are dealing with the $y$-variable.

Let us suppose we want to move from any point $z=(0,y,0) \in V_1$ to the origin $(0,0,0)$. If we reproduce the same construction as in \eqref{ex-traj}, we find:
\es{ex-traj2}{
z_1&=e^{s X}(z)=(s,y,0),\\
z_2&=e^{s^2 Y}(z_1)=(se^{s^2},-s+se^{s^2}+y ,-s^2),\\
z_3&=e^{-s X}(z_2)=(se^{s^2}-s,-s+se^{s^2}+y ,-s^2),\\
z_4&=e^{-s^2 Y}(z_3)=(s(1-e^{-s^2}),s(1-e^{-s^2}) +y,0).
}
If we choose $s$ such that $ s(1 -e^{-s^2})=-y$, we obtain $z_4= (-y,0,0)$, so that its second component is zero but, in constrast with the previous Example \ref{simple_ex}, we have that $z_4$ doesn't agree with our target point $\zeta = (0,0,0)$. 
\end{example}

In order to reach the point $\zeta = (0,0,0)$ also in the case of non dilation-invariant operators, we rely on the method introduced by Pagliarani and Pignotti in \cite{PagliaPigno}. In the case of the operator $\mathcal{K}$ in \eqref{eq-K-ndi} it is sufficient to use once more the integral curve of the vector field $X = \partial_x$. In the case of more general operators a further topological argument is needed to conclude the construction. We refer to \cite{PagliaPigno} for a detailed description of this construction.

We are now ready to prove our result.

\begin{proof}[Proof of Theorem \ref{taylor}]
 Let $z = (x,t), \z=(\xi, \tau)$ be two given points of $\Omega$. As explained above, the proof relies on a finite sequence of integral paths of the vector fields $X_1,\ldots,X_m$ and $Y$ connecting $z$ to $\z$. We use the construction made by Pagliarani, Pascucci and Pignotti \cite{PagliaPascPigno} for a dilation-invariant operator $\L$. In this case the trajectories $(\gamma_{v,s}^{(n)}(z))_{n=0,\ldots,\kappa}$ defined in \eqref{traj} are explicitely given and we prove that \eqref{tay-estimate} holds. We then discuss the modifications needed to deal with any non dilation-invariant operator $\L$, as introduced by Pagliarani and Pignotti in \cite{PagliaPigno}. 

As a preliminary result, we prove our claim \eqref{tay-estimate} under the  assumption that the points $z=(x,t)$ and $\zeta = (\xi, \tau)$ have the same temporal component $t=\tau$, by a finite iteration on $n = 0, \dots, \kappa$. We remove this assumption in the last part of the proof.

\medskip \noindent
\textbf{Base case $n=0$}. In this case, we are only changing the variables $x_{i}$, for $i=1,\ldots,m$, moving along the direction  $e^{s_0 X_{v_0}}$ where $v_0=(v_{0,1},\ldots,v_{0,m},0\ldots,0)$ is a suitable unit vector in $V_0$. Thus, equation \eqref{def-tay}, with $z=(x,t)$ and $\zeta = (x+s_0 v_0,t)$, rewrites as
\begin{eqnarray}\label{opic}
T^2_{z}u(\z) =u(x,t)+\sum_{i=1}^m \partial_{{x}_{i}} u(x,t) s_0 v_{0,i} +\frac{s_0^2}{2} \sum_{i,j=1}^m \partial ^2_{x_{i},x_{j}} u(x,t) v_{0,i} v_{0,j}.
\end{eqnarray}
We observe that $\Vert z^{-1} \circ \zeta \Vert ^2_K =|s_0|^2$ and therefore we want to show that 
\begin{eqnarray}\label{cask0}
u(\z)-T^2_{z}u(\z)=o(|s_0|^2) \quad \text{as} \quad s_0 \to 0.
\end{eqnarray}

By the multidimensional euclidean mean-value theorem, there exist $(\bar{v}_{i,j})_{1 \leq i,j \leq m}$, with $\bar{v}_{i,j} \in {\rm{span}} \lbrace e_{1},\ldots,e_{m} \rbrace$ and $|\bar{v}_{i,j}| \leq |v_0|$ such that
\begin{equation}\label{cask02}
\begin{split}
u(\z) - T^2_{z}u(\zeta)&=\frac{s_0^2}{2} \sum_{i,j=1}^m (\partial ^2_{x_{i},x_{j}} u(x+s_0 \bar{v}_{i,j},t) v_{0,i} v_{0,j}- \partial ^2_{x_{i},x_{j}} u(x,t)) v_{0,i} v_{0,j}\\
&=o(\vert s_0 \vert^2) \quad \textit{as $s_0 \to 0$},
\end{split}
\end{equation}
where we have used the continuity of the second order derivatives of $u$. Thus, we have proved \eqref{cask0} and we are done.

Let us remark that we don't need the dilation-invariance property for $Y$, as we don't make use of the vector field $Y$ in this part of the construction.

\medskip

\medskip \noindent
\textbf{Inductive step}. 
We now suppose the thesis true for a given non negative $n < \kappa$ and we prove it for $n+1$. 
%We introduce a further notation. If $z = (x,t) \in \R^{N+1}$, we recall the notation $x = x^{[1]} + \dots + x^{[\kappa]}$, where $x^{[k]} \in V^k$, for $k=1, \dots,\kappa$, in accordance with the structure of the matrix $B$ in \eqref{B}. Moreover, for 
For every $z, \zeta \in \R^{N+1}$ we set 
\begin{equation} \label{Ttilde}
 \tilde{T}^2_z u(\z):=T^2_z u(\z) - u(z).
\end{equation}
We define the points 
\en{
z&=(x,t),z_1=\gamma_{v,s}^{(n)}(z),z_2=e^{s^2 Y}(z_1)\\
z_3&=\gamma_{v,-s}^{(n)}(z_2),z_4=e^{-s^2 Y}(z_3)=\gamma_{v,s}^{(n+1)}(z)
}
where $v$ is the unique unitary vector in $V_{0,n+1} \subset V_0$, defined as $v=\frac{w}{|w|}$, where $w$ is the vector in $V_{0,n+1}$ such that $B^{n+1}w=\z^{[n+1]} -z^{[n+1]}$ and $s=|w|^{\frac{1}{2(n+1)+1}}$. 
We aim to prove that
\begin{eqnarray}\label{eq-3}
u(z_4)-T^2_{z}u(z_4)=o(\Vert z^{-1} \circ z_4 \Vert_K^2)=o(\vert s \vert^2) \quad \text{as} \quad s \to 0.
\end{eqnarray}
We now rewrite \eqref{eq-3} by using the notation \eqref{Ttilde} as follows
\es{boxes}{
u(z_4)-T^2_z u(z_4) &=\boxed{u(z_4)-u(z_3)}_{(1)}\\
&+\boxed{u(z_3)-u(z_2)-\tilde{T}^2_{z_2} u(z_3)}_{(2)} \\
&+ \boxed{u(z_2)-u(z_1)}_{(3)}\\
&+\boxed{\tilde{T}^2_{z_1} u(z) +u(z_1)-u(z)}_{(4)}\\
&+\boxed{\tilde{T}^2_{z_2} u(z_3) -\tilde{T}^2_{z_1} u(z)}_{(5)} \\
&-\boxed{\tilde{T}^2_{z} u(z_4)}_{(6)}.
}

By the inductive hypothesis, the second and the forth difference are $o(\vert s \vert^2)$ as $s \to 0$. 
Moreover, recalling \eqref{def-tay}, we have that $\tilde{T}^2_{z} u(z_4) \equiv 0$, being $x_4^{[0]} = x^{[0]}$ and $t_4 = t$. 
%($z$ and $z_4$ differ only with respect to the second variable.)  

We next apply definition \eqref{def-tay} to the fifth difference, and we find
\begin{align}\label{eq-4}
\begin{split}
\tilde{T}^2_{z_2} u(z_3) -\tilde{T}^2_{z_1} u(z) =&-s \sum_{i=1}^{m} (\partial_{x_{i}} u(z_2) - \partial_{x_{i}} u(z_1))v_i \\
&-\frac{s^2}{2}\sum_{i,j=1}^{m}(\partial^2_{x_i x_j} u(z_2) - \partial^2_{x_i x_j} u(z_1))v_i v_j.
\end{split}
\end{align}
As a consequence of condition \eqref{eq-Ymix}, we obtain the following equation
\begin{equation*}
\partial_{x_{i}} u(z_2) - \partial_{x_{i}} u(z_1)=\partial_{x_{i}} u(e^{s^2 Y}(z_1))-\partial_{x_{i}} u(z_1)=o(|s|).
\end{equation*}
Using the previous equation and the continuity of second order derivatives of $u$, we find that \eqref{eq-4} is equal to $o(\vert s \vert^2)$.

We now observe that 
\begin{eqnarray*}
u(z_4)-u(z_3)=u(e^{-s^{2}Y}(z_3))-u(z_3).
\end{eqnarray*}
By applying the mean value theorem along the direction of the drift, we find that there exists $\bar{s}$ such that 
\begin{eqnarray*}
u(e^{-s^{2}Y}(z_3))-u(z_3)=-s^2 Yu(e^{\bar{s}Y}(z_3)),
\end{eqnarray*}
where $|\bar{s}| \leq |s|$.
Similarly we obtain that
\begin{eqnarray*}
u(z_2)-u(z_1)=s^2 Yu(e^{\tilde{s}Y}(z_1)),
\end{eqnarray*}
where again $\tilde{s}$ verifies $|\tilde{s}| \leq |s|$.

By letting $s \to 0$, we find that $\bar{s},\tilde{s} \to 0$, and therefore, using the continuity of $Yu$, we have showed that the sum of the first and the third difference in \eqref{boxes} is again equal to $o(\vert s \vert^2)$ as $s \to 0$.
This proves \eqref{eq-3} and therefore concludes the proof of the inductive step.

As already pointed out, the construction of the trajectories in the case of non dilation-invariant operators requires the adjustments introduced in \cite{PagliaPigno}, to deal with the fact that the term $\tilde{T}^2_{z} u(z_4)$ in \eqref{boxes} fails to vanish. Indeed, with the notation \eqref{eq-sommadiretta}, $x$ writes as $x = x^{[0]}+ x^{[1]}+ \dots+ x^{[\kappa]}$, and we have $\tilde{T}^2_{z} u(z_4) \ne 0$ whenever $x_4^{[0]}\neq x^{[0]}$. To overcome this problem, we define a new point $z_5 = \left(x_5, t_5\right)$ as follows:
\begin{eqnarray*}
 x_5^{[0]} = x^{[0]}, \quad x_5^{[1]} = x_4^{[1]}, \dots, x_5^{[\kappa]} = x_4^{[\kappa]}, \quad t_5 = t_4.
\end{eqnarray*}
Note that in \cite{PagliaPigno} it is proved that
\begin{eqnarray*}
 \left| x^{[0]} - x_4^{[0]} \right| \le C \Vert z^{-1} \circ \z \Vert_K,
\end{eqnarray*}
for some positive constant $C$ only depending on the matrix $B$. Then
 \begin{equation} \label{eq-z5z4}
u(z_5)-u(z_4) = o(\Vert z^{-1} \circ \z \Vert_K^2) \quad \text{as} \quad \zeta \to z.
\end{equation} 
With this modification, expression \eqref{boxes} is replaced by 
 \begin{equation*}
u(z_5)-T^2_z u(z_5) =u(z_5)-u(z_4) - \tilde{T}^2_{z} u(z_5) + o(\Vert z^{-1} \circ \z \Vert_K^2) \quad \text{as} \quad \zeta \to z.
\end{equation*} 
Moreover, $x_5^{[0]} =  x^{[0]}$ and $t_5 = t$ yield $\tilde{T}^2_{z} u(z_5) = 0$. From \eqref{eq-z5z4} it then follows that
 \begin{equation*}
u(z_5)-T^2_z u(z_5) = o(\Vert z^{-1} \circ \z \Vert_K^2) \quad \text{as} \quad \zeta \to z.
\end{equation*}

\medskip

We are now in position to prove \eqref{tay-estimate}. We first consider the point $\bar{z}=e^{(t-\tau)Y}(z)=(e^{(t-\tau)B} x,\tau)$ and write
\begin{eqnarray}\label{eq-5}
u(\z)-T^2_z u(\z)=u(\z)-T^2_{\bar{z}} u(\z)+T^2_{\bar{z}} u(\z)-T^2_z u(\z).
\end{eqnarray}
Thanks to the previous steps, the first difference is $o(\Vert \bar{z}^{-1} \circ \z \Vert^2_K)=o(\Vert z^{-1} \circ \z \Vert_K^2)$ as $\Vert z^{-1} \circ \z \Vert_K^2 \to 0 $, since $\zeta$ and $\bar{z}$ have the same temporal component $\tau$. At the same time, the second difference in \eqref{eq-5} can be rewritten as 

%Regarding the second term, we apply the mean value theorem along the direction of the drift and find that there exists $\delta$ such that 
\begin{equation}\label{eq-6}
\begin{split}
T^2_{\bar{z}} u(\z)-T^2_z u(\z)&=u(\bar{z})-u(z)+\sum_{i=1}^m\left(\partial_{x_i}u(\bar{z})-\partial_{x_i}u(z)\right)(\xi_i-x_i)\\
&\quad +\frac{1}{2}\sum_{i,j=1}^m\left(\partial^2_{x_i x_j}u(\bar{z})-\partial^2_{x_i x_j}u(z)\right)(\xi_i-x_i)(\xi_j-x_j)+Yu(z)(\tau-t).
\end{split}
\end{equation}
Using the mean value theorem along the drift, we can rewrite difference $ u(\bar{z})-u(z)$ in \eqref{eq-6} as 
\begin{eqnarray}\label{diff1}
u(e^{(t-\tau)Y}(z))-u(z)=(t-\tau)Y u(e^{\delta Y}(z)),
\end{eqnarray}
where $\delta$ is such that $|\delta|\leq |t-\tau|$. Hence,  we obtain
\begin{eqnarray}
u(\bar{z})-u(z)-Y(z)(t-\tau)=(t-\tau)(Y u(e^{\delta Y}(z))-Y(z)),
\end{eqnarray}
which is $o(|t-\tau|)=o(\Vert z^{-1} \circ \z \Vert_K^2)$ as $\Vert z^{-1} \circ \z \Vert_K^2 \to 0 $, thanks to the continuity of $Yu$.

Finally, using again condition \eqref{eq-Ymix} and the continuity of the second derivatives of $u$, we obtain that the second and third difference in \eqref{eq-6} are also $o(|t-\tau|)=o(\Vert z^{-1} \circ \z \Vert_K^2)$ as $\Vert z^{-1} \circ \z \Vert_K^2 \to 0 $.

Combining all the previous estimates, we obtain
\begin{equation}
T^2_{\bar{z}} u(\z)-T^2_z u(\z)=o(\Vert z^{-1} \circ \z \Vert_K^2), \quad \textrm{as $\Vert z^{-1} \circ \z \Vert_K^2 \to 0 $}.
\end{equation}
and therefore \eqref{eq-5} is equal to $o(\Vert z^{-1} \circ \z \Vert_K^2) $ as $\Vert z^{-1} \circ \z \Vert_K^2 \to 0 $. This concludes the proof.
%This equation, together with \eqref{Ttilde}, concludes the proof.
\end{proof}

\setcounter{equation}{0}\setcounter{theorem}{0}
\section{Proof of Theorem \ref{th-1} }
We first prove a preliminary lemma, which is a straightforward consequence of the maximum principle.
\begin{lemma}\label{lemma-max}
Given $\varphi \in C(\partial \H_R(z_0))$ and $g \in C_b(\H_R(z_0)) $, we let $v$ be the solution to the following Dirichlet problem
\begin{equation*}
\left\{ \begin{array}{ll}
\L v=g,\quad &\textit{in $\H_R(z_0)$},\\
v=\varphi,\quad &\textit{in $\partial \H_R(z_0)$}.
\end{array} \right.
\end{equation*}
Then, the following holds
\begin{eqnarray}\label{max-principle}
\Vert v \Vert_{{L}^\infty(\H_R(z_0))}\leq \Vert \varphi \Vert_{{L}^\infty(\H_R(z_0))} + |t-t_1|\Vert g \Vert_{{L}^\infty(\H_R(z_0))},
\end{eqnarray}
where $t_1 = t_0 - R^2$ is the time coordinate of the basis of the cylinder $\H_R(z_0)$.
\end{lemma}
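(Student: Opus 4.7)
The plan is to apply the weak maximum principle for the Kolmogorov operator $\L$ to a pair of linear-in-time barrier functions. The construction is completely standard and relies only on the parabolic structure of $\L$ together with hypothesis {\rm \bf[H.1]}.

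First, I would introduce the auxiliary function
\begin{equation*}
w(x,t) := (t - t_1)\,\|g\|_{L^\infty(\H_R(z_0))},
\end{equation*}
which depends only on the time variable. Since all the spatial derivatives of $w$ vanish and the drift term $\langle Bx, Dw\rangle$ is identically zero, a direct computation yields
\begin{equation*}
\L w(x,t) = -\partial_t w(x,t) = -\|g\|_{L^\infty(\H_R(z_0))}\quad \text{in}\ \H_R(z_0).
\end{equation*}

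Next, I would consider the two comparison functions
\begin{equation*}
\psi_{\pm}(x,t) := \pm v(x,t) - \|\varphi\|_{L^\infty(\partial\H_R(z_0))} - (t - t_1)\,\|g\|_{L^\infty(\H_R(z_0))},
\end{equation*}
and observe that each of them is a subsolution of $\L$, since $\L \psi_{\pm} = \pm g + \|g\|_{L^\infty(\H_R(z_0))} \ge 0$ in $\H_R(z_0)$. Moreover, using the definition of the cylinder we have $t \ge t_0 - R^2 = t_1$ everywhere in the closure of $\H_R(z_0)$, and $v = \varphi$ on $\partial \H_R(z_0)$; therefore $\psi_{\pm} \le 0$ on the whole topological boundary $\partial \H_R(z_0)$.

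Applying the weak maximum principle for $\L$ then yields $\psi_{\pm} \le 0$ throughout $\H_R(z_0)$, which translates into the pointwise bound
\begin{equation*}
|v(x,t)| \le \|\varphi\|_{L^\infty(\partial\H_R(z_0))} + (t - t_1)\,\|g\|_{L^\infty(\H_R(z_0))} \qquad \text{for every } (x,t) \in \H_R(z_0),
\end{equation*}
which is exactly \eqref{max-principle}. The only potentially delicate point is the applicability of the weak maximum principle in the present degenerate framework; however, since the barrier $w$ depends only on the time variable, the comparison reduces to the classical parabolic maximum principle, which is available for $\L$ under hypothesis {\rm \bf[H.1]} by the standard results recalled in Section 2.
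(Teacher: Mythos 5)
Your proof is correct and follows essentially the same route as the paper: construct the linear-in-time barrier $(t-t_1)\|g\|_{L^\infty}$, form the comparison functions, verify that they are subsolutions with nonpositive boundary values, and invoke the maximum principle. The only cosmetic difference is that the paper proves the one-sided bound for $v-w$ and then replaces $v$ by $-v$, whereas you treat both signs simultaneously via $\psi_{\pm}$.
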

\pr{
We introduce the function $w(x,t): =(t-t_1)\Vert g \Vert_{{L}^\infty(\H_R(z_0))}+\Vert \varphi \Vert_{{L}^\infty(\H_R(z_0))}$ and we let $u:=v-w$. Clearly, $u$ satisfies $\L u = g+\Vert g \Vert_{{L}^\infty(\H_R(z_0))} \geq 0$ in $\H_R(z_0)$. Moreover, as $v \equiv \varphi$ on the boundary of $ \H_R(z_0)$, we have $u=\varphi-(t-t_1)\Vert g \Vert_{{L}^\infty(\H_R(z_0))}-\Vert \varphi \Vert_{{L}^\infty(\H_R(z_0))}\leq 0$ in $\partial \H_R(z_0)$. By the strong maximum principle, it follows that $u(x,t) \leq 0$ in $\H_R(z_0)$. 
Replacing $v$ by $-v$, estimate \eqref{max-principle} follows at once.
}

\pr{[Proof of Theorem \ref{th-1}] We first prove assertion \emph{(ii)}. We denote $\H_{k}=\H_{\varrho^{k}}(0)$, $\varrho=\frac{1}{2}$ and we consider the following sequence of Dirichlet problems:
\begin{equation}\label{dirichlet1}
\left\{ \begin{array}{ll}
\L u_{k}=f(0),\quad \textit{in $\H_{k}$}\\
u_{k}=u,\quad \textit{in $\partial \H_{k}$}
\end{array} \right.
\end{equation}
For any point $z=(x,t)$ satisfying $\Vert z \Vert_K \leq \frac{1}{2}$, we want to estimate the quantity
\begin{eqnarray*}
I(z) := |\partial^2 u(z) - \partial^2 u(0)|,
\end{eqnarray*}
where $\partial^2 u(z)$ stands for either $\partial^2_{x_i x_j} u(z)$, with $i,j=1,\ldots,m$, or $Yu(z)$.
To this end, we write $I$ as the sum of three terms:
\begin{align*}
I(z) &\leq |\partial^2 u_{k}(z) - \partial^2 u_{k}(0)| + |\partial^2 u_{k}(0) - \partial^2 u(0)| + \\
&\qquad\qquad\quad+|\partial^2 u(z) - \partial^2 u_{k}(z)| =: I_{1}(z)+I_{2}(z)+I_{3}(z).
\end{align*}
We first estimate $I_{2}$. Following \cite{Wang}, we prove that $\left(\partial^2 u_{k}(0)\right)_{k \in \N}$ is a Cauchy sequence and that its limit agrees with $\partial^2 u(0)$. The same assertion holds for $I_3$ of course.

First, we let $v_{k} := u - u_{k}$ and we observe that $v_{k}$ satisfies the Dirichlet boundary value problem
\begin{equation}
\left\{ \begin{array}{ll}
\L v_{k}=f-f(0),\quad \textit{in $\H_{k}$}\\
v_{k}=0,\quad \textit{in $\partial \H_{k}$}
\end{array} \right.
\end{equation}\label{dirichlet2}
From Lemma \ref{lemma-max} it follows that
\begin{align}\label{maxconse1}
\|v_{k}\|_{\infty}\le 4 \varrho^{2k}\|f-f(0)\|_{\infty}\le 4 \varrho^{2k} \omega_f(\varrho^k).
\end{align}
Moreover, since $\L (u_k-u_{k+1})=0$  in $\H_{k+1}$, we apply Proposition \ref{corollary} and Lemma \ref{lemma-max}, and we find
\begin{align}\nonumber
\Vert \partial_{x_i}(u_{k}-u_{k+1}) \Vert_{L^{\infty}(\H_{k+2})} &\leq C\varrho^{-k-2}\sup_{\H_{k+1}}|u_{k}-u_{k+1}|\nonumber \\ \nonumber
&\leq C \varrho^{-k}\Big(\sup_{\H_{k+1}}|v_{k}|+\sup_{\H_{k+1}}|v_{k+1}|\Big)\\
&\leq C \varrho^{-k}\varrho^{2k}\omega_{f}(\varrho^{k})=C\varrho^{k}\omega_{f}(\varrho^{k}), \label{terza}
\end{align}
for any $i= 1, \dots, m$. In the same way, we obtain
\begin{align}\nonumber
\Vert \partial_{x_i x_j}^2(u_{k}-u_{k+1}) \Vert_{L^{\infty}(\H_{k+2})} &\leq C\varrho^{-2k-4}\sup_{\H_{k+1}}|u_{k}-u_{k+1}| \\ 
&\leq C \varrho^{-2k}\varrho^{2k}\omega_{f}(\varrho^{k})=C\omega_{f}(\varrho^{k}) \label{quar}
\end{align}
for $i,j= 1, \dots, m$, and
\begin{align}\nonumber
\Vert Y(u_{k}-u_{k+1}) \Vert_{L^{\infty}(\H_{k+2})} &\leq C\varrho^{-2k-4}\sup_{\H_{k+1}}|u_{k}-u_{k+1}| \\ 
&\leq C \varrho^{-2k}\varrho^{2k}\omega_{f}(\varrho^{k})=C\omega_{f}(\varrho^{k}). \label{quarta}
\end{align}
Let $k\ge 1$ such that $\varrho^{k+4} \le \Vert z\Vert_K \le \varrho^{k+3}$ , then we have:
\begin{equation}\label{stima_I_2}
\sum_{l=k}^{\infty} |\partial^2 u_l (0) - \partial^2 u_{l+1} (0)| \le C\sum_{l=k}^{\infty} \omega_f(\varrho^l)\le C \int_0^{\Vert z\Vert_K} \frac{\omega_f (r)}{r} dr.
\end{equation}

We next identify the sum of the series $\sum_{l=k}^{\infty} \left(\partial^2 u_l (0) - \partial^2 u_{l+1} (0)\right)$ as 
\begin{equation}\label{sum-series}
 \sum_{l=k}^{\infty} \left(\partial^2 u_l (0) - \partial^2 u_{l+1} (0)\right) = \partial^2 u_k (0) - \partial^2 u (0).
\end{equation}
To this aim, we first consider the derivative $ \partial^2_{x_i x_j}u_k$ and we prove that
\begin{eqnarray} \label{conv-Taylor}
\lim_{k \to +\infty} \partial^2_{x_i x_j} u_k(0)=\partial^2_{x_i x_j} T_0^2u(0), 
\end{eqnarray}
where $T_0^2u(\z)$ is the second-order Taylor polynomial of $u$ around the origin, computed at some point $\z=(\x, \t) \in \H_k$:
\begin{eqnarray*}
T_0^2u (\z)= u(0) + \sum_{i=1}^m \partial_{x_{i}} u(0) \x_{i}
+ \frac{1}{2}\sum_{i,j=1}^m \partial ^2_{x_{i}x_{j}} u(0) \x_{i} \x_{j} - Yu(0)\t.
\end{eqnarray*}
Thus, by applying Theorem \ref{taylor} to $u\in C^{2}_\L(\H_{1}(0))$, we obtain from \eqref{conv-Taylor} that
\begin{eqnarray} \label{conv-u}
\lim_{k \to +\infty} \partial^2_{x_i x_j} u_k(0)=\partial^2_{x_i x_j} u(0).
\end{eqnarray}

We compute $\L T_0^2 u$ in $\zeta=(\xi,\tau)$ as 
\begin{equation*}
\begin{split}
\L T_0^2 u(\zeta)& = \! \! \sum_{i,j=1}^m \partial^2_{\xi_i \z_j} u(0) - \partial_t u(0) + \sum_{j=1}^N 
\! \! \bigg( \! \sum_{i=1}^m b_{ij} \partial_{\xi_i} u(0) \bigg) \xi_j + \sum_{l,j=1}^N 
\! \! \bigg( \! \sum_{i=1}^m b_{il} \partial^2_{\xi_j \xi_i }u(0) \bigg) \xi_l \xi_j\\
&= \! \! \sum_{i,j=1}^m \partial^2_{\xi_i \z_j} u(0) - \partial_t u(0) + \langle v, \xi \rangle + \langle  M \xi ,\xi \rangle,
\end{split}
\end{equation*}
where $v =\left(v_j \right)_{j=1,\ldots,N} = \left(\sum_{i=1}^m b_{ij} \partial_{\xi_i} u(0)\right)_{j=1,\ldots,N}$ is a constant vector of $\R^N$ and $M=\left(m_{lj}\right)_{l,j=1,\ldots,N} =\left(\sum_{i=1}^m b_{il} \partial^2_{\xi_j \xi_i }u(0)\right)_{l,j=1,\ldots,N}$ is a $N \times N$ constant matrix. 
%We also observe that the entries $m_{lj}$ of $M$ are zeros if $j >m$. 

In addition, as $\L u = f$ in $\H_k$, we have that
\begin{eqnarray}
\sum_{i,j=1}^m \partial^2_{\xi_i \z_j} u(0) - \partial_t u(0)=\L_0 u(0) =\L u(0) =f(0)
\end{eqnarray}
and thus
\begin{eqnarray}
\L T_0^2 u(\zeta)=f(0) + \langle v, \xi \rangle + \langle M \xi ,\xi \rangle.
\end{eqnarray}
Thus, the definition of $u_k$ in \eqref{dirichlet1} gives us
\begin{eqnarray}
\L \left( T_0^2 u-u_k\right)(\zeta)= \langle v, \xi \rangle + \langle M \xi ,\xi \rangle, \quad \zeta \in \H_k.
\end{eqnarray}
We now apply Lemma \ref{lem-tec-2} to $T_0^2 u-u_k$ for $R=\varrho^k$ and infer
\begin{equation}\label{tay-uk}
 |\partial^2_{x_i x_j}(u_k - T_0^2 u)(0)| \leq C \varrho^{-2k}\sup_{\H_k} \vert u_k - T_0^2 u \vert + O(\varrho^k).
\end{equation}

%We observe that $T_0^2u$ is a polynomial of degree $2$ in $\x_{1},...,\x_{m}$ and of degree 1 in $\t$, and that it does not depend on $\x_{m+1},...,\x_N$. In particular, $\L T_0^2u(\z)$ is constant, as we have, for every $\z=(\x, \t) \in \H_k$ and for $i,j=1,\ldots,m$:
%\begin{eqnarray*}
%\partial_{x_i} T_0^2u (\zeta) = \partial_{x_i} u (0), \qquad \partial_{x_i x_j}^2 T_0^2u (\zeta) = \partial_{x_i x_j}^2 u (0), \quad \text{and} \quad Y T_0^2u (\zeta) = Y u (0).
%\end{eqnarray*}
%As a consequence we have
%\begin{eqnarray*}
%\L T_0^2u (\z) = \L u(0) = f(0) =\L u_k(\z)
%\end{eqnarray*}
%and therefore $\L (T_0^2u -u_k)=0$ in $\H_k$.
Moreover, since $T_0^2u$ is the second-order Taylor polynomial of $u$, we have $u(\z) = T_0^2u(\z) + o(\Vert \z \Vert_K^2)$. It follows that 
\begin{eqnarray}\label{esti-with-tay}
\sup_{\z \in \H_k}|u - T_0^2u| = o(\varrho^{2k})
\end{eqnarray}
Thus, from estimates \eqref{esti-with-tay} and \eqref{maxconse1}, we obtain
\begin{eqnarray}\label{sup-est}
\sup_{\H_k}|u_k - T_0^2u| \leq \sup_{\H_k}|v_k| +\sup_{\H_k}|u - T_0^2u| \leq 4\omega_{f}(\varrho^k)\varrho^{2k} + o(\varrho^{2k}) \leq o(\varrho^{2k}).
\end{eqnarray}
Estimates \eqref{tay-uk} and \eqref{sup-est} finally yield
\begin{equation*}
|\partial^2_{x_i x_j}(u_k - T_0^2u)(0)| \leq C \varrho^{-2k} \sup_{\H_k}|u_k - T_0^2u| +O(\varrho^k)\leq C \varrho^{-2k}o(\varrho^{2k}) +O(\varrho^k) \leq o(1),
\end{equation*}
where, as usual, the indexes $i$ and $j$ range from $1$ to $m$. Thus, for any $i,j=1,\ldots,m$ we have showed that \eqref{conv-Taylor} holds true. Repeating the same argument for the vector field $Y$, and using again Theorem \ref{taylor}, we obtain:
\begin{eqnarray*}
\lim_{k \to +\infty}Y u_k(0)=Y T_0^2u(0)=Y u(0).
\end{eqnarray*}
In conclusion, using \eqref{stima_I_2}, we obtain:
\begin{equation}\label{stima_I_2-b}
I_2 \le \sum_{l=k}^{\infty} |\partial^2 u_l (0) - \partial^2 u_{l+1} (0)| \le C \int_0^{\Vert z\Vert_K} \frac{\omega_f (r)}{r} dr,
\end{equation}
for $k\ge 1$ such that $\varrho^{k+4} \le \Vert z\Vert_K \le \varrho^{k+3}$.
Similarly, we can estimate $I_3$ through the solution of $\L v=f(z)$ in $\H_j(z)$ and $v=u$ on $\partial \H_j(z)$ and obtain
\begin{equation}\label{stima_I_3}
I_3 \le \sum_{l=k}^{\infty} |\partial^2 u_l (z) - \partial^2 u_{l+1} (z)| \le C \int_0^{\Vert z\Vert_K} \frac{\omega_f (r)}{r} dr.
\end{equation}

Finally, let us estimate $I_1$. 
Since $h_k=u_k-u_{k+1} \in C^\infty(\H_{k+2})$,  we can apply Proposition \ref{mean-value-lem} to the functions $\partial^2_{x_i x_j} h_k$ and $Y h_k$:
\en{
|\partial^2_{x_i x_j} h_k(z)-\partial^2_{x_i x_j} h_k(0)| \leq \frac{C}{\varrho^{k}}\Vert z \Vert_K \Vert \partial^2_{x_i x_j} \Vert_{L^{\infty}(\H_{k+1})}
}
and 
\en{
|Y h_k(z)-Y h_k(0)| \leq \frac{C}{\varrho^{k}}\Vert z \Vert_K \Vert Y h_k \Vert_{L^{\infty}(\H_{k+1})},
}
for $i,j=1,\ldots,m$. We can now apply once again \eqref{quar} to obtain
\en{
|\partial^2_{x_i x_j} h_k(z)-\partial^2_{x_i x_j} h_k(0)| \leq \frac{C}{\varrho^{k}}\Vert z \Vert_K \Vert \partial^2_{x_i x_j} h_k \Vert_{L^{\infty}(\H_{k+1})}
\leq C\Vert z \Vert_K\varrho^{-k}\omega_{f}(\varrho^{k}).
}
In addition, thanks to \eqref{quarta}, we infer
\en{
|Y h_k(z)-Y h_k(0)| \leq \frac{C}{\varrho^{k}}\Vert z \Vert_K \Vert Y h_k \Vert_{L^{\infty}(\H_{k+1})}
\leq C\Vert z \Vert_K\varrho^{-k}\omega_{f}(\varrho^{k}) .
}
Hence, since $u_k(z)-u_k(0)=u_0(z)-u_0(0)+\sum_{j=0}^{k-1}\left(h_j(0)-h_j(z)\right)$, we have
\begin{align*}
I_1 &\le | \partial^2 u_0 (z)-\partial^2u_0 (0)|+\sum_{j=0}^{k-1} |\partial^2 h_j (z)-\partial^2 h_j(0)|\\
&\le C \Vert z \Vert_K \big( \|u_0\|_{L^\infty(\H_0)}  +C\sum_{j=0}^{k-1} \varrho^{-j} \omega_f(\varrho^j)\big)\\
&\le C\Vert z \Vert_K \big( \|u\|_{L^\infty(\H_1(0))} + \|f\|_{L^\infty(\H_1(0))}+ C \int_{\Vert z \Vert_K} ^1 \frac{\omega_f(r)}{r^2}\big).
\end{align*}
Combining the above estimate with \eqref{stima_I_2-b} and \eqref{stima_I_3}, we complete the proof of \emph{(ii)}.

\medskip

We now prove assertion \emph{(i)}. We consider $u_1$ solution to the following Dirichlet problem
\begin{eqnarray*}
\left\{ \begin{array}{ll}
\L u_{1}=f(0),\quad &\textit{in $\H_{1/2}(0)$}\\
u_{1}=u,\quad &\textit{in $\partial \H_{1/2}(0)$}
\end{array} \right.
\end{eqnarray*}
Then, we have
\begin{eqnarray}\label{par-u-0}
|\partial^2 u(0)| \leq |\partial^2 u(0) -\partial^2 u_1(0)| + |\partial^2 u_1(0)|
\end{eqnarray}
Thanks to \eqref{stima_I_2-b}, we can estimate the first term in \eqref{par-u-0} as
\begin{eqnarray}
|\partial^2 u(0) -\partial^2 u_1(0)|\le C \int_0^{1} \frac{\omega_f (r)}{r} dr.
\end{eqnarray}
To estimate the second term in \eqref{par-u-0}, we consider the function $v(z):=u_1(z)\eta_{1/2}(z)$, where $\eta_{1/2} $ is the cut-off function introduced in \eqref{cut-off} with $R=\frac{1}{2}$. Reasoning as in the proof of Proposition \ref{lem-apriori}, we obtain
\begin{equation*}
\begin{split}
u(z) = v(z) &= \int_{\H_{\frac{1}{2}}(0)}[\Gamma(z,\cdot) \div(A D_ m(\eta_{1/2})) u_1](\z)d \z\\ 
&\quad- \int_{\H_{\frac{1}{2}}(0)}[\Gamma(z,\cdot) Y(\eta_{1/2})u_1](\z)d\z \\
&\quad - \int_{\H_{\frac{1}{2}}(0)}[\Gamma(z,\cdot)\eta_{1/2}\L(u_1)](\z)d\z\\
&+ 2 \int_{\H_{\frac{1}{2}}(0)}[\langle D_m^\zeta \Gamma(z,\cdot), A D_m \eta_{1/2} \rangle u_1](\z)d\z,
\end{split}
\end{equation*}
where $z \in \H_{\frac{1}{4}}(0)$. Thanks to Lemma \ref{lemma-max}, we estimate
\begin{eqnarray*}
\sup_{\H_{\frac{1}{2}}(0)}{|u_1|} \leq \sup_{\H_{\frac{1}{2}}(0)}{|u|} +4|f(0)|. 
\end{eqnarray*}
As the derivatives of $\eta_{1/2}$ vanish in $\H_{3/8}(0)$, for any $i,j=1,\ldots,m$, we obtain
\begin{equation}\label{overlineI}
\begin{split}
|\partial^2_{x_i x_{j}}u_1(z)|
%|\partial^2_{x_i x_{j}}v(z)| 
&\leq \int_{\H_{\frac{1}{2}}(0) \setminus \H_{\frac{3}{8}}(0)}\big\vert[\partial^2_{x_i x_{j}} \Gamma(z,\cdot) \div(A D_ m(\eta_{1/2})) u_1](\z)\big\vert d\z \\ 
&\quad + \int_{\H_{\frac{1}{2}}(0) \setminus \H_{\frac{3}{8}}(0)}\big\vert[\partial^2_{x_i x_{j}} \Gamma(z,\cdot) Y(\eta_{1/2}) u_1](\z)\big\vert d\z \\  
&\quad +2 \int_{\H_{\frac{1}{2}}(0) \setminus \H_{\frac{3}{8}}(0)}\big\vert[\langle \partial^2_{x_i x_{j}} D_m^\zeta \Gamma(z,\cdot), A D_m \eta_{1/2}\rangle u_1] (\z)\big\vert d\z \\
&\quad +\bigg\vert f(0)\Big[\partial^2_{x_i x_{j}}\int_{\H_{\frac{1}{2}}(0)} [ \Gamma(z,\cdot) \eta_{1/2} ](\z) d\z \Big] \bigg\vert\\
&\quad =: \overline{I}_{1}(z) + \overline{I}_{2}(z)+\overline{I}_3(z)+\overline{I}_4(z).
\end{split}
\end{equation}
Moreover, as the derivatives of $\eta_{1/2}$ are bounded, we estimate the first and second integral in \eqref{overlineI} as
\begin{eqnarray*}
\overline{I}_{1}(z) \leq C \big[ \sup_{\H_{\frac{1}{2}}(0)}{|u|} +4|f(0)|\big],\\ 
\overline{I}_{2}(z) \leq C \big[ \sup_{\H_{\frac{1}{2}}(0)}{|u|} +4|f(0)|\big],\\
\overline{I}_{3}(z) \leq C \big[ \sup_{\H_{\frac{1}{2}}(0)}{|u|} +4|f(0)|\big].
\end{eqnarray*}
%The last integral in \eqref{overlineI} can be estimated observing that we are multiplying the function $f(0)$ by the second order derivatives of $w(z):=  \int_{\H_{\frac{1}{2}}(0)}[ \Gamma(z,\cdot) \eta_{1/2} ](\z) d\z $. Since $w \in C^\infty(\H_{\frac{1}{2}}(0))$, it has bounded derivatives on $\H_{\frac{1}{2}}(0)$ by Weierstrass's theorem.
Finally, by taking advantage of \eqref{est-sec-gamma-const}, we obtain that $\overline{I}_4(z)$ is bounded by a constant $C$ that only depends on $B$, $\lambda$ and $\Lambda$.

By using the same argument we can estimate $|Y u_1(0)|$ and thus
\begin{eqnarray}\label{par-u-0-2}
|\partial^2 u_1(0)| \leq C \big[ \sup_{\H_{\frac{1}{2}}(0)}{|u|} +4|f(0)|\big].
\end{eqnarray}
Combining estimates \eqref{par-u-0} and \eqref{par-u-0-2}, we conclude the proof of Theorem \ref{th-1}.
}

\setcounter{equation}{0}\setcounter{theorem}{0}
\section{ Dini continuous coefficients}
This Section is devoted to the proof of Theorem \ref{th-2}. We therefore consider a solution $u$ to the equation
\begin{equation*} %\label{kdini}
\LL u=f,
\end{equation*}
where the operator $\LL$ does satisfy the hypotheses {\rm \bf[H.1]} and {\rm \bf[H.2]} and $f$ is assumed to be Dini continuous, and we proceed as in the proof of Theorem \ref{th-1}. Specifically, we denote $\H_{k}=\H_{\varrho^{k}}(0)$, $\varrho=\frac{1}{2}$ and we consider the following sequence of Dirichlet problems:
\begin{equation} \label{dirichlet3}
\left\{ \begin{array}{ll}
\sum\limits_{i,j=1}^m a_{ij}(0,0) \partial^2_{x_i x_j} u_{k}+Yu_{k}=f(0), \quad \text{in $\H_{k}$}\\
u_{k}=u,\quad \text{on $\partial \H_{k}$}.
\end{array} \right.
\end{equation}
Note that the bounds given in Propositions \ref{lem-apriori}, \ref{corollary} and \ref{mean-value-lem} only depend on the constants $\lambda, \Lambda$ in {\rm \bf[H.2]} and on the matrix $B$. Keeping in mind this fact, the proof of Theorem \ref{th-2} is given by the same argument used in the proof of Theorem \ref{th-1}.

\begin{proof}[Proof of Theorem \ref{th-2}]
Consider, for every $k \in \N$, the auxiliary function $v_{k} := u - u_{k}$, and note that it is a solution to the boundary value problem
\begin{equation} \label{dirichlet4}
\left\{ \begin{array}{lll}
\sum\limits_{i,j=1}^m a_{ij}(0,0) \partial^2_{x_i x_j} v_k +Y v_k \\
\quad =f-f(0)+\sum\limits_{i,j=1}^m (a_{ij}(0)-a_{ij}(x,t))\partial^2_{x_i x_j}  u, \quad \text{in $\H_{k}$}\\
v_k =0,\quad \textit{in $\partial \H_{k}$}
\end{array} \right.
\end{equation}
In order to simplify the notation, we let
\begin{equation} \label{eta-def}
    \eta := \max_{i,j=1,\dots,m}\|\partial^2_{x_i x_j} u\|_{L^\infty(\H_{1})}.
%    eta := \max_{i,j=1,\dots,m}\sup_{\H_{1}}|\partial^2_{x_i x_j} u|
\end{equation}
From Lemma \ref{lemma-max} it follows that
\begin{align*}%\label{maxconse2}
\|v_{k}\|_{L^\infty(\H_k)} \le C\varrho^{2k} [\omega_f(\varrho^k)+\omega_a(\varrho^k) \eta].
\end{align*}
Hence
\begin{align*}
\|u_{k}-u_{k+1}\|_{L^\infty(\H_{k+1})} \le C\varrho^{2k} [\omega_f(\varrho^k)+\omega_a(\varrho^k) \eta].
\end{align*}

As already observed, we can apply Corollary \ref{corollary} and obtain estimates for the second order derivatives of $v_k$. In fact, for any $i,j=1,\ldots,m$, we have

\begin{align}\nonumber
\Vert \partial^2_{x_i x_j}(u_{k}-u_{k+1}) \Vert_{L^{\infty}(\H_{k+2})} &\leq C(\varrho^{k})^{-2}\sup_{\H_{k+1}}|u_{k}-u_{k+1}| \\ 
&\leq C \varrho^{-2k}\varrho^{2k}[\omega_{f}(\varrho^{k})+\omega_a(\varrho^k) \eta]=C[\omega_{f}(\varrho^{k}) +\omega_a(\varrho^k) \eta]
\end{align}
and
\begin{align}\nonumber
\Vert Y(u_{k}-u_{k+1}) \Vert_{L^{\infty}(\H_{k+2})} &\leq C(\varrho^{k})^{-2}\sup_{\H_{k+1}}|u_{k}-u_{k+1}| \\ 
&\leq C \varrho^{-2k}\varrho^{2k}[\omega_{f}(\varrho^{k})+\omega_a(\varrho^k) \eta]=C[\omega_{f}(\varrho^{k}) +\omega_a(\varrho^k) \eta]
\end{align}

To estimate the second order derivatives of the function $u$, we apply Theorem \ref{taylor} and proceed as in the proof of Theorem \ref{th-1}. Since there are no significant differences, we omit the details here.
\end{proof}


\begin{thebibliography}{10}

\bibitem{AnceschiPolidoro}
Francesca Anceschi and Sergio Polidoro.
\newblock A survey on the classical theory for {K}olmogorov equation.
\newblock {\em Matematiche (Catania)}, 75(1):221--258, 2020.

\bibitem{ArenaCarusoCausa2010}
Gabriella Arena, Andrea~O. Caruso, and Antonio Causa.
\newblock Taylor formula on step two {C}arnot groups.
\newblock {\em Rev. Mat. Iberoam.}, 26(1):239--259, 2010.

\bibitem{BLU}
A.~Bonfiglioli, E.~Lanconelli, and F.~Uguzzoni.
\newblock {\em Stratified {L}ie groups and potential theory for their
  sub-{L}aplacians}.
\newblock Springer Monographs in Mathematics. Springer, Berlin, 2007.

\bibitem{Bonfiglioli}
Andrea Bonfiglioli.
\newblock Taylor formula for homogeneous groups and applications.
\newblock {\em Math. Z.}, 262(2):255--279, 2009.

\bibitem{BucurKarakhanyan}
Claudia Bucur and Aram~L. Karakhanyan.
\newblock Potential theoretic approach to {S}chauder estimates for the
  fractional {L}aplacian.
\newblock {\em Proc. Amer. Math. Soc.}, 145(2):637--651, 2017.

\bibitem{DiFrancescoPolidoro}
Marco Di~Francesco and Sergio Polidoro.
\newblock Schauder estimates, {H}arnack inequality and {G}aussian lower bound
  for {K}olmogorov-type operators in non-divergence form.
\newblock {\em Adv. Differential Equations}, 11(11):1261--1320, 2006.

\bibitem{Folland}
G.~B. Folland.
\newblock Subelliptic estimates and function spaces on nilpotent {L}ie groups.
\newblock {\em Ark. Mat.}, 13(2):161--207, 1975.

\bibitem{FollandStein}
G.~B. Folland and E.~M. Stein.
\newblock Estimates for the {$\bar \partial \sb{b}$} complex and analysis on
  the {H}eisenberg group.
\newblock {\em Comm. Pure Appl. Math.}, 27:429--522, 1974.

\bibitem{Hormander}
Lars H\"{o}rmander.
\newblock Hypoelliptic second order differential equations.
\newblock {\em Acta Math.}, 119:147--171, 1967.

\bibitem{Kolmogorov}
A.~Kolmogoroff.
\newblock Zuf\"{a}llige {B}ewegungen (zur {T}heorie der {B}rownschen
  {B}ewegung).
\newblock {\em Ann. of Math. (2)}, 35(1):116--117, 1934.

\bibitem{LanconelliPolidoro}
E.~Lanconelli and S.~Polidoro.
\newblock On a class of hypoelliptic evolution operators.
\newblock volume~52, pages 29--63, 1994.
\newblock Partial differential equations, II (Turin, 1993).

\bibitem{Lorenzi}
Luca Lorenzi.
\newblock Schauder estimates for degenerate elliptic and parabolic problems
  with unbounded coefficients in {${\mathbb R}^N$}.
\newblock {\em Differential Integral Equations}, 18(5):531--566, 2005.

\bibitem{Lunardi}
Alessandra Lunardi.
\newblock Schauder estimates for a class of degenerate elliptic and parabolic
  operators with unbounded coefficients in {${\bf R}^n$}.
\newblock {\em Ann. Scuola Norm. Sup. Pisa Cl. Sci. (4)}, 24(1):133--164, 1997.

\bibitem{Manfredini}
Maria Manfredini.
\newblock The {D}irichlet problem for a class of ultraparabolic equations.
\newblock {\em Adv. Differential Equations}, 2(5):831--866, 1997.

\bibitem{Menozzi}
St\'{e}phane Menozzi.
\newblock Parametrix techniques and martingale problems for some degenerate
  {K}olmogorov equations.
\newblock {\em Electron. Commun. Probab.}, 16:234--250, 2011.

\bibitem{nagelstein}
Alexander Nagel, Elias~M. Stein, and Stephen Wainger.
\newblock Balls and metrics defined by vector fields. {I}. {B}asic properties.
\newblock {\em Acta Math.}, 155(1-2):103--147, 1985.

\bibitem{PagliaPascPigno}
Stefano Pagliarani, Andrea Pascucci, and Michele Pignotti.
\newblock Intrinsic {T}aylor formula for {K}olmogorov-type homogeneous groups.
\newblock {\em J. Math. Anal. Appl.}, 435(2):1054--1087, 2016.

\bibitem{PagliaPigno}
Stefano Pagliarani and Michele Pignotti.
\newblock Intrinsic {T}aylor formula for non-homogeneous {K}olmogorov-type
  {L}ie groups.
\newblock 2017.

\bibitem{Priola}
Enrico Priola.
\newblock Global {S}chauder estimates for a class of degenerate {K}olmogorov
  equations.
\newblock {\em Studia Mathematica}, 194, 06 2007.

\bibitem{RothschildStein}
Linda~Preiss Rothschild and E.~M. Stein.
\newblock Hypoelliptic differential operators and nilpotent groups.
\newblock {\em Acta Math.}, 137(3-4):247--320, 1976.

\bibitem{Wang}
Xu-Jia Wang.
\newblock Schauder estimates for elliptic and parabolic equations.
\newblock {\em Chinese Ann. Math. Ser. B}, 27(6):637--642, 2006.

\bibitem{WeiJiangWu}
Na~Wei, Yongsheng Jiang, and Yonghong Wu.
\newblock Partial {S}chauder estimates for a sub-elliptic equation.
\newblock {\em Acta Math. Sci. Ser. B (Engl. Ed.)}, 36(3):945--956, 2016.

\end{thebibliography}
\end{document}